\newcommand{\N}{\mathbb{N}}
\newcommand{\R}{\mathbb{R}}
\theoremstyle{plain}
\newtheorem{theorem}{Theorem}
\newtheorem{proposition}[theorem]{Proposition}
\newtheorem{lemma}[theorem]{Lemma}
\theoremstyle{definition}
\begin{document}

\title[]{Rigorous Derivation of the Degenerate Parabolic-Elliptic Keller-Segel System from A Moderately Interacting Stochastic Particle System.\\
Part I Partial Differential Equation }
\date{\today}
\author{Li Chen}
\address{School of Business Informatics and Mathematics, Universität Mannheim, 68131, Mannheim, Germany}
\email{li.chen@uni-mannheim.de}

\author{Veniamin Gvozdik}
\address{School of Business Informatics and Mathematics, Universität Mannheim, 68131, Mannheim, Germany}
\email{veniamin.gvozdik@uni-mannheim.de}

\author{Yue Li}
\address{Department of Mathematics, Nanjing University,
 Nanjing, 210093, P.R. China}
\email{liyue2011008@163.com}

\begin{abstract}
The aim of this paper is to provide the analysis result for the partial differential equations arising from the rigorous derivation of the degenerate parabolic-elliptic Keller-Segel system from a moderately interacting
stochastic particle system.
The rigorous derivation is divided into two articles. In this paper, we establish the solution theory of the degenerate parabolic-elliptic  Keller-Segel problem and its non-local version,
which will be used in the second paper for the discussion of the mean-field limit. A parabolic regularized system is introduced to bridge the stochastic particle model and the degenerate Keller-Segel system.
We derive the existence of the solution to this regularized system by constructing approximate solutions, giving uniform estimates
and taking the limits, where a crucial step is to obtain the $L^\infty$ Bernstein type estimate for the gradient of the approximate solution.
Based on this, we obtain the well-posedness of the corresponding
non-local equation through perturbation method. Finally, the weak solution of the degenerate Keller-Segel system is obtained by using a nonlinear version of Aubin-Lions lemma.
\end{abstract}

\keywords{ Chemotaxis; Keller-Segel model; Degenerate parabolic-elliptic system; Bernstein estimate}
\subjclass[2010]{35K57,35K45.}
\maketitle

\pagenumbering{arabic}

\section{Introduction}

We consider a heuristic mathematical model from biology which describes chemotaxis, namely the collective motion of cells which are attracted or repelled by a chemical substance. From the biological view of point, there are two types of chemotaxis. If organisms move from a lower concentration towards a higher concentration of the chemical substance, it's called a positive chemotaxis.   The opposite phenomenon is called a negative chemotaxis. Substances that induce chemotaxis are called chemoattractants or chemorepelents. One of the easiest ways to model chemotaxis is  to assume that the chemotactic flux $F$ is given by
$F= \chi u \nabla c$, where $u$ is the organisms' density, $c$ is the chemical concentration and $\chi$ is a so-called chemotactic sensitivity (for more details we refer to \citep{arumugam2021keller}). If we choose  $\chi>0$ then we obtain positive chemotaxis and vice versa. Since the chemical substance  diffuses as well, one typically considers $c$ as a solution to some elliptic or parabolic equations which are coupled with $u$.

In this series of two papers, we present the rigorous derivation of the following degenerate parabolic-elliptic Keller-Segel system  in the sub-critical regime
\begin{align}
	\label{Keller_Segel_classical}
	\begin{cases}
		\partial_t u = - \nabla \cdot (\chi u \nabla c) + \nabla \cdot (u \nabla p(u)),    \\
		-\Delta c= u(t,x), \\
		u(0,x)=u_0(x), \ \ \ \ x\in \R^d,\;\; t>0,
	\end{cases}
\end{align}
where $u$ and $c$ stand for the density of cells and chemical substance respectively and $p(u)=\frac{m}{m-1}u^{m-1}$ for a given $m\geq 1$. For simplicity we take the chemotactic sensitivity $\chi=1$.

The problem \eqref{Keller_Segel_classical} will be derived from a system of stochastic differential equations,
which describes the movements of $N$ particles in $\R^d$ ($d\geq 2$).
Let $(\Omega, \mathcal{F}, (\mathcal{F}_{t\geq 0}) , \mathbb{P})$ be a complete filtered probability space.
We consider $d$-dimensional $\mathcal{F}_t$-Brownian motions $\{ (B_t^i)_{t\geq 0} \}_{i=1}^N$ which are assumed to be independent of each other. Involving a parabolic regularization with diffusion coefficient $\sigma>0$, the dynamics of $N$ particles $(X_t^{N,i, \varepsilon, \sigma})_{1\leq i\leq N}$ is governed by
\begin{align}
\label{generalized_regularized_particle_model}
\begin{cases}
dX_t^{N,i, \varepsilon, \sigma} = \frac{1}{N} \sum_{j =1}^N \nabla \Phi^{\varepsilon_k}(X_t^{N,i, \varepsilon, \sigma} -X_t^{N,j, \varepsilon, \sigma}) dt - \nabla p_{\lambda} \Big( \frac{1}{N} \sum_{j=1}^N  V^{\varepsilon_p}(X_t^{N,i, \varepsilon, \sigma} -X_t^{N,j, \varepsilon, \sigma} ) \Big) dt + \sqrt{2\sigma }  \,dB_t^i, \\
X_0^{N,i, \varepsilon, \sigma} =\zeta^i,
\end{cases}
\end{align}
where $\{\zeta_i\}_{i=1}^N$ are given i.i.d. random variables, independent of $\{ (B_t^i)_{t\geq 0} \}_{i=1}^N$ and have a common density function $u_0^\sigma$.
Here $(u_0^\sigma)_\sigma\subset C_0^\infty(\R^d)$ is a sequence which approximates $u_0\in L^1(\mathbb{R}^d)\cap L^\infty(\mathbb{R}^d)$ in the following sense
\begin{eqnarray}\label{app_u0_sigma}
	u_0^\sigma\rightarrow u_0 \mbox{ in } L^l((0,T)\times\R^d), \forall l\in [1,\infty) \mbox{ as } \sigma\rightarrow 0, \mbox{ and } \|u_0^\sigma\|_{L^q((0,T)\times\R^d)}\leq \|u_0\|_{L^q((0,T)\times\R^d)}, \forall q\in [1,\infty].
\end{eqnarray}
In the stochastic system \eqref{generalized_regularized_particle_model} we assume that $ 0\leq V (|x|) \in C_0^{\infty} (\R^d)$ and satisfies $\int _{\R^d}V(x) dx =1$. We use standard mollification kernel $V^{\varepsilon} (x) := \frac{1}{\varepsilon^d } V(x / \varepsilon)$ to approximate the fundamental solution $\Phi$ of the Laplace equation  $\Phi^{\varepsilon_k}:= \Phi * V^{\varepsilon_k}$, and use $V^{\varepsilon_p}$ as the kernel of the moderate interaction. Notice that we use different mollification radius $\varepsilon_k$ and $\varepsilon_p$ to approximate the aggregation kernel and the moderate interaction in diffusion. Furthermore, we use $p_{\lambda}$  as an approximation of $p$ (for $\lambda\rightarrow 0$) such that $p_{\lambda} \in C^3(\R_+)$ and
\begin{align}
	\label{p_lambda_definition}
	p_{\lambda}(r)  = \begin{cases}
		p(2/\lambda), \ &\text{if} \ r>2/ \lambda,  \\
		p(r), \ & \text{if} \ 2\lambda <r< 1/ \lambda,\\
		p(\lambda), \ & \text{if} \ 0<r<\lambda.
	\end{cases}
\end{align}

The so-called Keller-Segel model was introduced by \citep{keller1970initiation}, \citep{keller1971model} and \citep{patlak1953random}, which nowadays has many different modifications. The Keller-Segel systems are of diffusion aggregation type. It is shown in the literature, for example in \cite{blanchet2006two}, that the linear diffusion competes with the aggregation on the same level, while for higher dimensions the aggregation effect of the Keller-Segel system is stronger than the diffusion effect, therefore nonlinear diffusion has been introduced in order to balance the aggregation. One typical example is \eqref{Keller_Segel_classical}, where the nonliear diffusion is of the porous medium type.

Moreover, Keller-Segel system \eqref{Keller_Segel_classical} has the mass conservation property, namely
\begin{align*}
M_0:= \int_{\R^d} u_0(x) dx = \int_{\R^d} u(t,x) dx \;\;\; \text{for} \;\;\;  t\geq 0.
\end{align*}
It is well-known that solution's behavior  depends on  the quantity $M_0$. Choosing $d=2$ and $m=1$ in \eqref{Keller_Segel_classical} one obtains a  system of partial differential equations which  corresponds  to the original work of   \citep{keller1970initiation}. The authors of \citep{blanchet2006two} proved that for the sub-critical case, namely $0< M_0< 8\pi$,  the solution exists globally  and  for the super-critical case, that is $M_0> 8 \pi$, there is a blow up in finite time.
Later using the free-energy method  the  authors of \citep{blanchetinfinite} considered the critical case, namely $M_0=8 \pi$, and  proved that the solutions of  \eqref{Keller_Segel_classical} for $d=2$ and $m=1$ exist globally and blow up as a Dirac delta at the center of mass when time goes to infinity.
The behavior of the solution to \eqref{Keller_Segel_classical} for $d\geq 3$ and $m=2-\frac{2}{d} \in (1,2)$ was studied in \citep{blanchet2009critical}.  The authors of \citep{blanchet2009critical} obtained a critical mass $M_c$ and studied  behavior of the solution to \eqref{Keller_Segel_classical}  for different $M_0$. Authors of  \citep{luckhaus2006large}, \citep{sugiyama2006global}, \citep{sugiyama2007application}, \citep{sugiyama2007time} and \citep{sugiyama2006global_power} proved  that if $m> 2-\frac{2}{d}$ then the uniqueness and existence of the solution to  \eqref{Keller_Segel_classical}  hold for any initial data and if $1<m\leq  2-\frac{2}{d}$ then one observers the solutions blow up for some large initial data.
It was proposed in \citep{chen2012multidimensional} that the nonlinear diffusion term with $m_c= \frac{2d}{d+2}$,
which is smaller than the usual exponent $2-\frac{2}{d}$,
is also important for Keller-Segel system from many points of view. In this case, the system has a family of stationary solutions which are quite similar to the two dimensional case from \citep{keller1971model}, which is also the sharp profile in Hardy-Littlewood-Sobolev inequality. Moreover, the associated free entropy is conformal invariant.  For $ \frac{2d}{d+2} < m < \frac{2(d+1)}{d}$  a clear criterion for the  classification of  initial data is given in \citep{wang2016parabolic}. We refer to   \citep{wang2016parabolic} for detailed information about these two critical exponents, namely, the exponent $\frac{2(d-1)}{d}$  which comes from the scaling invariance of the mass, and the exponent $\frac{2d}{d+2}$  which comes from the conformal invariance of the entropy.
The aim of the series of these two papers is to derive rigorously \eqref{Keller_Segel_classical} from \eqref{generalized_regularized_particle_model}. Due to the moderate interaction and the singular interaction potential $\Phi$, the whole analysis is obtained in the subcritical regime of $m$ where the regular solution exists.

We introduce an intermediate particle problem, which is formally viewed as a mean-field limit $N\rightarrow \infty$ in the system \eqref{generalized_regularized_particle_model} for fixed
$\varepsilon_k,\varepsilon_p>0$, namely
\begin{align}
\label{generalized_intermediate_particle_model}
\begin{cases}
d\bar{X}_t^{i, \varepsilon, \sigma} = \nabla  \Phi^{\varepsilon_k}  *u^{\varepsilon, \sigma} ( t, \bar{X}_t^{i, \varepsilon, \sigma}) dt - \nabla p_{\lambda} \big(  V^{\varepsilon_p} * u^{\varepsilon, \sigma} (t, \bar{X}_t^{i, \varepsilon, \sigma}  ) \big) dt +\sqrt{2\sigma } dB_t^i ,\\
\bar{X}_0^{i, \varepsilon, \sigma} =\zeta^i,
\end{cases}
\end{align}
where $ u^{\varepsilon, \sigma}(t, x)$ is the probability density function of $\bar{X}_t^{i, \varepsilon, \sigma}$ and solves the following non-local partial differential equations
\begin{align}
\label{generalized_equation_u_epsilon_sigma}
\begin{cases}\partial_t u^{\varepsilon, \sigma} =  \sigma \Delta u^{\varepsilon, \sigma} -\nabla ( u^{\varepsilon, \sigma} \nabla c^{\varepsilon, \sigma}) + \nabla \cdot (u^{\varepsilon, \sigma} \nabla p_{\lambda} (V^{\varepsilon_p} *u^{\varepsilon, \sigma} ) ),     \\
-\Delta c^{\varepsilon, \sigma} = V^{\varepsilon_k}* u^{\varepsilon, \sigma} ( t,x ),  \\
u^{\varepsilon, \sigma}(0,x)=u^\sigma_0(x), \ \ \ \ x\in \R^d, t>0.
\end{cases}
\end{align}
With the help of the intermediate particle problem, it is proved in \cite{CGHL} that the solution of \eqref{generalized_regularized_particle_model} converges to the solution of its McKean-Vlasov problem,
\begin{align}
\label{generalized_particle_model}
\begin{cases} d\hat{X}_t^{i, \sigma} =   \nabla  \Phi  *u^{\sigma} ( t, \hat{X}_t^{i, \sigma})dt - \nabla p  (u^{ \sigma} (t, \hat{X}_t^{i,  \sigma}  ) )dt +\sqrt{2\sigma } dB_t^i,  \\
\hat{X}_0^{i, \sigma} =\zeta^i,
\end{cases}
\end{align}
where $ u^{\sigma}(t, x)$ is the probability density function of $\hat{X}_t^{i, \sigma}$ and solves the following partial differential equations
\begin{align}
	\label{generalized_equation_u_sigma}
	\begin{cases}\partial_t u^{ \sigma} = \sigma \Delta u^{\sigma} -\nabla \cdot  ( u^{ \sigma} \nabla c^\sigma)
		+ \nabla  \cdot  (u^{ \sigma} \nabla p (u^{ \sigma}) ),  \\
		-\Delta c^{ \sigma} = u^{ \sigma} ( t,x ),  \\
		u^{ \sigma}(0,x)=u^\sigma_0(x), \ \ \ \ x\in \R^d, t>0.
	\end{cases}
\end{align}

If we consider \eqref{generalized_equation_u_sigma} and take the limit $\sigma \rightarrow 0$ on the PDE level, we can obtain system of partial differential equations \eqref{Keller_Segel_classical}.
In this paper, we only focus on the analysis of partial differential equations \eqref{generalized_equation_u_epsilon_sigma}, \eqref{generalized_equation_u_sigma}, \eqref{Keller_Segel_classical}, and the convergence among them.
More details about the mean-field result for this problem can be found in \cite{CGHL}.
Our goal of this paper is to establish the existence, uniqueness and regularity of  $u^{\sigma}$ and $u^{\varepsilon, \sigma}$, and furthermore the existence of a weak solution $u$ for problem \eqref{Keller_Segel_classical}.

The well-posedness result for \eqref{generalized_equation_u_sigma} is given in the following theorem
\begin{theorem}
\label{theorem_u_sigma}
For any given $T>0$, let $u_0^\sigma$ be given by \eqref{app_u0_sigma}, then the system \eqref{generalized_equation_u_sigma} possesses a unique global solution $u^{\sigma}$ such that  $u^{\sigma} \in L^{\infty} ( 0,T; W^{1, \infty} (\R^d)  ) \cap L^{\infty}(0,T; L^1(\R^d ))$ and $u^{\sigma} \in W_q^{3,1}((0,T)\times \R^d)$ for any $1<q<\infty$.
\end{theorem}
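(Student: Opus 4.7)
The plan is to prove Theorem \ref{theorem_u_sigma} by constructing regularized approximate solutions of \eqref{generalized_equation_u_sigma}, deriving uniform a priori estimates, and passing to the limit. A natural choice of approximation is the non-local system \eqref{generalized_equation_u_epsilon_sigma}, whose coefficients are smooth and bounded for each $\varepsilon,\lambda>0$; any estimate uniform in $\varepsilon,\lambda$ will yield the limit solution of \eqref{generalized_equation_u_sigma} after sending $\varepsilon\to 0$ and $\lambda\to 0$.

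Three layers of a priori estimates on $u^{\varepsilon,\sigma}$ are required. First, mass conservation in $L^1$ is immediate from integrating the divergence form of the equation. Second, a global $L^\infty$ bound follows from the fact that $\sigma>0$ makes the linear diffusion dominant; a Moser/De Giorgi iteration, using elliptic regularity to control $\nabla c^{\varepsilon,\sigma}$ by the $L^q$ norm of $u^{\varepsilon,\sigma}$, together with the correct sign of the nonlinear-diffusion term tested against $(u^{\varepsilon,\sigma}-k)_+$, gives $\|u^{\varepsilon,\sigma}\|_{L^\infty}\le C(\sigma,T)$. Third, and most crucially, one needs the Bernstein-type $L^\infty$ estimate on $\nabla u^{\varepsilon,\sigma}$: differentiating the equation and applying a pointwise maximum principle to $|\nabla u^{\varepsilon,\sigma}|^2$, the dangerous term from the aggregation drift becomes proportional to $|\nabla u^{\varepsilon,\sigma}|^2\Delta c^{\varepsilon,\sigma}\sim |\nabla u^{\varepsilon,\sigma}|^2\, u^{\varepsilon,\sigma}$, which is absorbed by the previous $L^\infty$ bound, while the degenerate-diffusion piece supplies the dissipation needed to close a Gr\"onwall inequality uniformly in $\varepsilon,\lambda$.

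With $u^{\varepsilon,\sigma}$ uniformly in $L^\infty(0,T;W^{1,\infty}\cap L^1)$ and $\partial_t u^{\varepsilon,\sigma}$ correspondingly bounded in a negative Sobolev space read from \eqref{generalized_equation_u_epsilon_sigma}, the Aubin--Lions lemma yields strong $L^q_{\mathrm{loc}}$ compactness along a subsequence; elliptic regularity transfers this to convergence of $\nabla c^{\varepsilon,\sigma}\to\nabla c^\sigma$, standard mollifier estimates give $V^{\varepsilon_p}*u^{\varepsilon,\sigma}\to u^\sigma$ and $V^{\varepsilon_k}*u^{\varepsilon,\sigma}\to u^\sigma$, and the uniform $L^\infty$ bound lets $p_\lambda\to p$ uniformly on the relevant compact range of values, so one passes to the limit in every term of \eqref{generalized_equation_u_epsilon_sigma} and recovers \eqref{generalized_equation_u_sigma}. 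Uniqueness follows by testing the difference of two solutions against itself, using monotonicity of the pressure and the uniform $W^{1,\infty}$ control on the drift to close a Gr\"onwall inequality. Finally, the regularity $u^\sigma\in W_q^{3,1}$ is obtained by parabolic bootstrap: since $u^\sigma,\nabla u^\sigma\in L^\infty$, the equation reads as a linear uniformly parabolic equation with right-hand side in $L^q$, so maximal parabolic regularity gives $u^\sigma\in W_q^{2,1}$, and differentiating once more and repeating the argument yields $W_q^{3,1}$.

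The main obstacle is the Bernstein $L^\infty$ estimate for $\nabla u^{\varepsilon,\sigma}$: the aggregation drift $\nabla c^{\varepsilon,\sigma}$ has only $L^\infty$ second derivative (no H\"older control is available without a better bound on $u^{\varepsilon,\sigma}$), and after differentiating the equation one encounters terms quadratic in $\nabla u^{\varepsilon,\sigma}$ with the wrong sign that must be absorbed delicately using the dissipation from $\sigma\Delta$ and from the degenerate-diffusion piece, exploiting the cutoff structure of $p_\lambda$ to keep all constants independent of the regularization parameters.
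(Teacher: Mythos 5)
You take a genuinely different approximation route than the paper: you propose sending $\varepsilon,\lambda\to 0$ in the non-local system \eqref{generalized_equation_u_epsilon_sigma}, whereas the paper approximates \eqref{generalized_equation_u_sigma} by the \emph{local} regularization \eqref{u_sigma_approximation}, replacing the degenerate pressure by the pointwise $(u^\sigma_\eta+\eta)^m$ and sending $\eta\to 0$. That difference is fatal at the hardest step, the Bernstein-type $L^\infty$ estimate on $\nabla u$ uniform in the regularization parameter. The paper's Bernstein argument (adapted from Sugiyama) works by changing variables $u=f_k(\bar{\mathfrak{u}})$ on level sets $\{(k-1)\tilde\omega\le u<k\tilde\omega\}$, with $f_k$ designed so that, after differentiating the equation for $\bar{\mathfrak{u}}$ and testing against a cut-off, the coefficient of $|\nabla\bar{\mathfrak{u}}|^4$ comes out strictly negative and absorbs the quartic-in-gradient terms. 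This is a \emph{pointwise} algebraic cancellation and is available precisely because $p(u)$ --- respectively $(u+\eta)^m$ --- depends only on the value of $u$ at the same point. In your approximation the pressure is $p_\lambda(V^{\varepsilon_p}*u^{\varepsilon,\sigma})$, a non-local quantity, so differentiating produces $V^{\varepsilon_p}*\nabla u^{\varepsilon,\sigma}$ and related convolutions rather than $\nabla u^{\varepsilon,\sigma}$ at the same point; the favorable sign no longer follows from the choice of $f_k$, and it is unclear how to recover it uniformly in $\varepsilon_p$. You do not address this, and it is the crux of the proof.

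There is also a misstatement about the dissipation mechanism: you write that ``the degenerate-diffusion piece supplies the dissipation needed to close a Gr\"onwall inequality uniformly in $\varepsilon,\lambda$.'' The paper explicitly advertises the opposite as its main departure from Sugiyama: the Bernstein estimate must lean on the $\sigma\Delta$ viscosity, \emph{not} on the nonlinear diffusion, because the latter degenerates as $u\to 0$ and cannot supply uniform dissipation (with $p_\lambda$ this is worse, since $p_\lambda'\equiv 0$ wherever $V^{\varepsilon_p}*u<\lambda$). A smaller structural remark: in the paper, well-posedness of \eqref{generalized_equation_u_epsilon_sigma} (Theorem \ref{theorem_u_epsilon_sigma}) is itself obtained by perturbing around $u^\sigma$, so choosing \eqref{generalized_equation_u_epsilon_sigma} as the approximation scheme obliges you to build a global existence theory for the non-local problem from scratch, uniformly in $\varepsilon,\lambda$; local existence by Picard iteration for fixed $\varepsilon,\lambda>0$ is fine, but the global continuation is exactly what the missing uniform Bernstein estimate would provide. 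The remaining ingredients of your outline --- mass conservation, Moser iteration for $L^\infty$ using the sign of $\Delta c$, Aubin--Lions compactness, parabolic bootstrap to $W^{3,1}_q$, and uniqueness once $W^{1,\infty}$ control is known --- agree with the paper and are sound granted the a priori estimates.
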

In this paper we use the notation
\begin{align*}
W_q^{3,1}((0,T)\times \R^d):=\big\{ f\in L^q(0,T;W^{3,q}(\R^d) ) \cap W^{1,q}(0,T;L^q(\R^d) )  \ \big| \  \| f \|_{W_q^{3,1}((0,T)\times \R^d)}< \infty \big\},
\end{align*}
where, with the notation of multi-index $\alpha=(\alpha_1,\ldots,\alpha_d)$ ($|\alpha|=\sum_{1\leq i\leq d}\alpha_i$), the corresponding norm is given by
\begin{align*}
\| f \|_{W_q^{3,1}((0,T)\times \R^d)}:=& \sum_{|\alpha|\leq 3} \|D^\alpha f\|_{L^q((0,T)\times \R^d)} + \| \partial_t f \|_{L^q((0,T)\times \R^d)}.
\end{align*}

Since the system \eqref{generalized_equation_u_sigma} contains aggregation and diffusion terms, which are both non-linear, we need to perform a two-step
fixed point argument. Using classical tools from the theory of linear PDEs, such as  Schauder fixed-point theorem and Banach fixed-point theorem, we can construct a unique local approximate solution to the problem \eqref{generalized_equation_u_sigma}.
The key point of establishing global well-posedness of this approximate solution is to derive $L^\infty$ estimate of the gradient of approximate solution. We adapt the Bernstein type estimates from Lemma 13 in \citep{sugiyama2007time}, which is very technical and involved.
Substantial difference of our approach from the estimates given in \citep{sugiyama2007time} is that the whole procedure should not depend on the parabolic regularity from the nonlinear diffusion term $\nabla\cdot(u^\sigma\nabla p(u^\sigma))$, but should depend on the viscosity term $\sigma \Delta u^\sigma $.
More details can be found in Section \ref{final}. This additional Bernstein type estimate makes it possible to proceed the regularity argument for $u^\sigma$ from the theory of linear parabolic equations by using iteration scheme, for example Theorem 9.2.2 of \citep{wu2006elliptic} can be directly applied.

The second main result of this paper is the well-posedness of the non-local equations \eqref{generalized_equation_u_epsilon_sigma} for $u^{\varepsilon,\sigma}$ and the error estimate of $u^\sigma-u^{\varepsilon,\sigma}$.
\begin{theorem}
 \label{theorem_u_epsilon_sigma}
For any given $T>0$, let $u_0^\sigma$ be given by \eqref{app_u0_sigma}, $s> \frac{d}{2} +2 $, $m \in    \big( \N \cap \big( 0, \frac{d}{2} +3 \big]  \big) \cup \big(\frac{d}{2} +3, \infty\big)$, and $u^\sigma$ be obtained in Theorem \ref{theorem_u_sigma}. Then there exist $\varepsilon_0, \lambda_0>0$ such that for $\varepsilon_k$, $\varepsilon_p\leq \varepsilon_0$ and $\lambda\leq \lambda_0$, a unique global solution $u^{\varepsilon, \sigma} \in L^{\infty}(0,T; H^s(\R^d))$ of \eqref{generalized_equation_u_epsilon_sigma} exists and
\begin{align}\label{a1}
\|u^{\sigma} - u^{\varepsilon, \sigma}\|_{L^{\infty} (0,T; H^s (\R^d) ) }  \leq C(\varepsilon_k + \varepsilon_p),
\end{align}
where $C$ is a positive constant independent of $\varepsilon_k$, $\varepsilon_p$ and $\lambda$.
\end{theorem}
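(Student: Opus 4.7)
The plan is to treat $u^{\varepsilon,\sigma}$ as a perturbation of the known solution $u^\sigma$ from Theorem \ref{theorem_u_sigma}, establishing global existence in $H^s$ and the error bound \eqref{a1} simultaneously via a continuity argument. First I would prove local existence in $H^s(\R^d)$ by a Banach fixed-point argument on Duhamel's formula with the heat semigroup $e^{t\sigma\Delta}$. Since all nonlinearities in \eqref{generalized_equation_u_epsilon_sigma} are regularized (the aggregation by $V^{\varepsilon_k}$, the pressure by $V^{\varepsilon_p}$ and the $C^3$ cut-off $p_\lambda$) and $H^s$ is a Banach algebra for $s>d/2$, this gives a unique maximal solution $u^{\varepsilon,\sigma}\in C([0,T^*);H^s)$ with a standard blow-up criterion in $\|u^{\varepsilon,\sigma}\|_{H^s}$.

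Choose $\lambda_0$ smaller than $1/(\|u^\sigma\|_{L^\infty((0,T)\times\R^d)}+1)$ so that $p_\lambda$ coincides with $p$ on the range of $u^\sigma$ up to a harmless tail of size $O(\lambda^{m-1})$ near zero. Setting $w:=u^{\varepsilon,\sigma}-u^\sigma$, subtraction of \eqref{generalized_equation_u_sigma} from \eqref{generalized_equation_u_epsilon_sigma} yields
\begin{align*}
\partial_t w = \sigma\Delta w - \nabla\cdot\bigl( w\,\nabla c^\sigma + u^{\varepsilon,\sigma}\,\nabla(c^{\varepsilon,\sigma}-c^\sigma) \bigr) + \nabla\cdot \mathcal{D}[w] + \nabla\cdot \mathcal{E}_\varepsilon,
\end{align*}
where $-\Delta(c^{\varepsilon,\sigma}-c^\sigma) = V^{\varepsilon_k}*w + (V^{\varepsilon_k}*u^\sigma - u^\sigma)$, the term $\mathcal{D}[w]$ collects the remaining contributions that are linear in $w$ (coming from the splitting of the non-local pressure), and the consistency error $\mathcal{E}_\varepsilon$ is $O(\varepsilon_k+\varepsilon_p)$ in $H^s$. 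The main inputs are the mollification estimate $\|V^\varepsilon * f - f\|_{H^s} \lesssim \varepsilon\|f\|_{H^{s+1}}$ applied to $f=u^\sigma$, together with the fact that $u^\sigma\in L^\infty(0,T;H^{s+1})$, which I would obtain by bootstrapping the parabolic regularity already available from Theorem \ref{theorem_u_sigma} and the smoothness of $u_0^\sigma\in C_0^\infty$.

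The heart of the argument is an $H^s$ energy estimate for $w$. Applying $D^\alpha$ with $|\alpha|\le s$, pairing with $D^\alpha w$, integrating by parts, and using Kato--Ponce-type commutator inequalities together with the chain rule for $p_\lambda$ (whose uniform tame estimates are enabled by the dichotomous hypothesis on $m$), one obtains, as long as $\|u^{\varepsilon,\sigma}\|_{H^s}$ stays bounded,
\begin{align*}
\frac{d}{dt}\|w\|_{H^s}^2 + \sigma\|\nabla w\|_{H^s}^2 \le C\bigl( \|w\|_{H^s}^2 + (\varepsilon_k+\varepsilon_p)^2 \bigr),
\end{align*}
with $C$ depending only on $\|u^\sigma\|_{L^\infty(0,T;H^{s+1})}$. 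Since $w(0)=0$, Grönwall gives $\|w\|_{L^\infty(0,T^*;H^s)}\le C(\varepsilon_k+\varepsilon_p)$. Choosing $\varepsilon_0$ small enough keeps $\|u^{\varepsilon,\sigma}\|_{H^s}$ uniformly bounded on $[0,T]$, so the blow-up criterion extends the local solution to the whole interval, and \eqref{a1} follows.

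The main technical obstacle is closing the $H^s$ energy estimate uniformly in $\varepsilon_k,\varepsilon_p,\lambda$. Three points must be treated together: (i) the aggregation $\nabla\cdot(u\nabla c)$, where elliptic regularity gains only two derivatives over $V^{\varepsilon_k}*u^{\varepsilon,\sigma}$, so commutators with $D^\alpha$ must be handled carefully and motivate the assumption $s>d/2+2$; (ii) the non-local nonlinear diffusion $\nabla\cdot\bigl(u\nabla p_\lambda(V^{\varepsilon_p}*u)\bigr)$, which requires Moser-type estimates with constants independent of $\lambda$ and $\varepsilon_p$, and this is precisely the reason for the restriction on $m$; and (iii) the consistency error must truly be linear in $\varepsilon_k+\varepsilon_p$ and independent of $\lambda$, so that the bootstrap on $\|w\|_{H^s}$ closes and $u^{\varepsilon,\sigma}$ remains in a tubular neighborhood of $u^\sigma$ in $H^s$ throughout $[0,T]$.
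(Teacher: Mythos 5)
Your overall strategy — perturbing around the regular solution $u^\sigma$, deriving an $H^s$ energy inequality for $w=u^{\varepsilon,\sigma}-u^\sigma$, invoking Gr\"onwall with $w(0)=0$, and closing a bootstrap by choosing $\varepsilon_k+\varepsilon_p$ and $\lambda$ small — is the same one the paper follows (the paper packages it slightly differently, as a Banach fixed point on the perturbation in a small ball $Y$ rather than local Duhamel existence plus a continuity argument, but that is a presentational choice). You also correctly identify the roles of the hypothesis $s>d/2+2$, the restriction on $m$ for tame estimates on $p_\lambda$, the need for $u^\sigma\in L^\infty(0,T;H^{s+1})$ to control mollification errors, and the $\lambda$ cutoff.

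However, there is a genuine gap in how you treat the top-order contribution of the non-local nonlinear diffusion. After applying $D^\alpha$, testing against $D^\alpha w$ and integrating by parts, the principal term from $\nabla\cdot\bigl(u^{\varepsilon,\sigma}\nabla p_\lambda(V^{\varepsilon_p}*u^{\varepsilon,\sigma})\bigr)$ has the shape
\begin{equation*}
-\int_{\R^d} u^{\sigma}\,p'_\lambda(V^{\varepsilon_p}*\cdot)\,\bigl(V^{\varepsilon_p}*D^\alpha\nabla w\bigr)\cdot D^\alpha\nabla w\,dx,
\end{equation*}
and this is \emph{not} manifestly dissipative, because $V^{\varepsilon_p}*D^\alpha\nabla w$ and $D^\alpha\nabla w$ are different functions. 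A crude bound via Young gives only $\lesssim\|u^\sigma p'_\lambda\|_{L^\infty}\|\nabla D^\alpha w\|_{L^2}^2$, with a constant that is $O(1)$ and in general not smaller than $\sigma$, so it cannot be absorbed into $\sigma\|\nabla D^\alpha w\|_{L^2}^2$. Your proposal invokes ``Kato--Ponce-type commutator inequalities'' and ``Moser-type estimates'', which control the lower-order pieces, but does not supply the device needed to extract the good sign from the leading term.

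What the paper actually does at this point is different: it writes $V^{\varepsilon_p}=W^{\varepsilon_p}*W^{\varepsilon_p}$ (a symmetric ``square root'' of the mollifier) so that the pairing becomes $-\int W^{\varepsilon_p}*(u^\sigma p'_\lambda\,\nabla D^\alpha w)\cdot W^{\varepsilon_p}*\nabla D^\alpha w$, and then commutes the nonnegative coefficient $u^\sigma p'_\lambda$ across $W^{\varepsilon_p}$ using the mollifier–multiplication commutator estimate (the paper's Lemma \ref{lemma_non_loc}, $\|V^\varepsilon*(fg)-(V^\varepsilon*f)g\|_{L^q}\le C\varepsilon\|\nabla g\|_{L^\infty}\|f\|_{L^q}$). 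This yields the nonpositive quantity $-\int u^\sigma p'_\lambda\,|W^{\varepsilon_p}*\nabla D^\alpha w|^2\le 0$ plus commutator errors of size $O(\varepsilon_p)\|\nabla D^\alpha w\|_{L^2}^2$, which can then be absorbed into $\sigma\|\nabla D^\alpha w\|_{L^2}^2$ by choosing $\varepsilon_p$ small. Without this symmetrization-plus-commutator step, the $H^s$ estimate you write down does not close, and the argument cannot run. I'd recommend adding this lemma and the square-root-of-mollifier trick explicitly to your plan; once it is in place, the rest of your outline is sound.
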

In general, it is hard to obtain well-posedness results for non-local problems without assuming that the initial data are small enough. Due to the fact that the non-local problem \eqref{generalized_equation_u_epsilon_sigma} is used as an approximation, it is enough to deal with it for small $\varepsilon_k$, $\varepsilon_p$, and $\lambda$. Therefore, it is possible to handle arbitrarily given initial data. The key idea is to study the time evolution of $u^{\sigma} - u^{\varepsilon, \sigma}$. Together with the regularity we obtained for $u^\sigma$, we derive the $H^s$ energy estimate for $u^{\sigma} - u^{\varepsilon, \sigma}$ and obtain not only the well-posedness of \eqref{generalized_equation_u_epsilon_sigma} but also the error estimate shown in Theorem \ref{theorem_u_epsilon_sigma} simultaneously.

Finally, we obtain the compactness of $(u^\sigma)_\sigma$ and show that the limit of a sub-sequence is the weak solution to the problem \eqref{Keller_Segel_classical}.
\begin{theorem}
\label{lsigma}
For any given $T>0$, let $0\leq u_0 \in  L^1(\R^d)\cap L^\infty(\R^d)$, $ m =2 \ \text{or} \ m\geq 3$, and $u^\sigma$ be obtained in Theorem \ref{theorem_u_sigma}.
Then there exist a subsequence $(u^\sigma)_\sigma$ (not relabeled) and a function $u$ with
$u\in L^\infty(0,T;L^1(\R^d)\cap L^\infty(\R^d))$ and $u^m\in L^2(0,T;H^1(\R^d))$, such that
\begin{align*}
u^\sigma\rightarrow u\;\;\;{\rm{in}}\;\;\;L^{2m}(0,T;L^{2m}(\R^d)).
\end{align*}
Furthermore, $u$ satisfies the problem \eqref{Keller_Segel_classical} in the sense of distribution.
\end{theorem}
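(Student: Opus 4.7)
The plan is to pass to the limit $\sigma\to 0$ in the weak formulation of \eqref{generalized_equation_u_sigma} by combining uniform a priori estimates with a nonlinear Aubin-Lions type compactness argument.

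First, I would derive $\sigma$-independent estimates for $u^\sigma$. Mass conservation yields $\|u^\sigma(t)\|_{L^1}=\|u_0^\sigma\|_{L^1}\leq\|u_0\|_{L^1}$. Since $u\nabla p(u)=\nabla u^m$, testing \eqref{generalized_equation_u_sigma} against $(u^\sigma)^{k-1}$ gives
\begin{align*}
\frac{1}{k}\frac{d}{dt}\|u^\sigma\|_{L^k}^k+\frac{4\sigma(k-1)}{k^2}\|\nabla(u^\sigma)^{k/2}\|_{L^2}^2+\frac{4(k-1)m}{(k+m-1)^2}\|\nabla(u^\sigma)^{(k+m-1)/2}\|_{L^2}^2=\frac{k-1}{k}\|u^\sigma\|_{L^{k+1}}^{k+1}.
\end{align*}
A Gagliardo-Nirenberg interpolation, controlled by the sub-critical range of $m$ together with the mass bound, absorbs the right-hand side into the porous-medium dissipation term, giving $L^\infty(0,T;L^k(\R^d))$ bounds for every $k\geq 2$; a Moser/Alikakos iteration then upgrades these to $u^\sigma\in L^\infty(0,T;L^\infty(\R^d))$ uniformly in $\sigma$. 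Specializing $k=m+1$ yields $\|\nabla(u^\sigma)^m\|_{L^2((0,T)\times\R^d)}\leq C$, so that $(u^\sigma)^m$ is uniformly bounded in $L^2(0,T;H^1(\R^d))$. The restriction $m=2$ or $m\geq 3$ is used so that the integer-like powers appearing in these identities are smooth enough to justify the formal manipulations.

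Second, for compactness, I would exploit the equation
\begin{align*}
\partial_t u^\sigma=\sigma\Delta u^\sigma+\Delta(u^\sigma)^m-\nabla\cdot(u^\sigma\nabla c^\sigma),
\end{align*}
combining the preceding bounds with the elliptic regularity $\|\nabla c^\sigma\|_{L^\infty(0,T;L^q)}\leq C$ (from $u^\sigma\in L^\infty(L^1\cap L^\infty)$), to control $\partial_t u^\sigma$ uniformly in $L^2(0,T;H^{-s}(\R^d))$ for some $s\geq 1$. A nonlinear Aubin-Lions lemma, combining the spatial $H^1$-control on the nonlinear quantity $(u^\sigma)^m$ with the weak time-derivative control on $u^\sigma$, then produces a subsequence (not relabeled) and a limit $u$ such that $u^\sigma\to u$ strongly in $L^1_{loc}((0,T)\times\R^d)$. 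Interpolating this with the uniform $L^\infty$ bound upgrades the convergence to $L^{2m}_{loc}$, and a tightness estimate at spatial infinity, obtained by testing the equation against a cut-off weight $\varphi_R(x)$ and tracking the resulting moment-type inequality, promotes the convergence to global strong convergence in $L^{2m}((0,T)\times\R^d)$.

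Finally, I would pass to the limit in the weak formulation. Strong $L^{2m}$ convergence of $u^\sigma$ forces $(u^\sigma)^m\to u^m$ strongly in $L^2$, and together with the uniform $L^2(H^1)$ bound gives $(u^\sigma)^m\rightharpoonup u^m$ weakly in $L^2(0,T;H^1(\R^d))$, yielding the claimed regularity of $u^m$. Elliptic regularity and strong convergence of $u^\sigma$ give $\nabla c^\sigma\to\nabla c$ locally in $L^2$, so the cross term satisfies $u^\sigma\nabla c^\sigma\to u\nabla c$ in $L^1_{loc}$; the viscous contribution $\sigma\Delta u^\sigma$ vanishes in the distributional sense because $u^\sigma$ is uniformly bounded in $L^2$. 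These ingredients together identify $u$ as a distributional solution of \eqref{Keller_Segel_classical}. The principal difficulty I anticipate is the $\sigma$-independent $L^\infty$ bound on $u^\sigma$: the viscous dissipation $\sigma\Delta u^\sigma$ cannot be used in the final estimates, so the porous-medium dissipation $\Delta u^m$ alone must balance the aggregation, which is precisely where the sub-critical assumption on $m$ becomes decisive and where the technical calculations will concentrate.
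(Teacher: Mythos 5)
Your proposal is correct and follows essentially the same route as the paper: uniform-in-$\sigma$ $L^1\cap L^\infty$ bounds, an energy estimate producing $\|\nabla(u^\sigma)^m\|_{L^2((0,T)\times\R^d)}\leq C$ and $\sqrt{\sigma}\|\nabla u^\sigma\|_{L^2}\leq C$, a uniform bound on $\partial_t u^\sigma$ in a negative Sobolev space, the nonlinear Aubin--Lions lemma of Chen--J\"ungel--Liu to obtain strong convergence of $u^\sigma$, and then passage to the limit in the weak formulation. The only differences are cosmetic: you re-derive the uniform $L^\infty$ bound via an $L^k$-iteration and test against $(u^\sigma)^{m}$ directly, while the paper recalls the $\sigma$-independent $L^\infty$ bound from the earlier Moser iteration and tests against $u^\sigma$ (obtaining $\nabla(u^\sigma)^{(m+1)/2}$ and then bootstrapping to $\nabla(u^\sigma)^m$ by the $L^\infty$ bound); and you make the spatial tightness step explicit via a cut-off, where the paper condenses it into ``growing balls and a diagonal argument''.
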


Due to the degeneracy of $\nabla (u^\sigma)^m$, it is difficult to obtain uniform estimates in $\sigma$ for $\nabla u^\sigma$.
Therefore, instead of using the classical Aubin-Lions lemma, we apply a nonlinear version of it from \cite{CJJ} to derive the strong compactness of $(u^\sigma)_\sigma$.

Existence of a weak solution for degenerate parabolic-elliptic Keller-Segel system \eqref{Keller_Segel_classical} in the subcritical regime has been obtained in \citep{sugiyama2007time}.
In this sense, we reprove the same result by using an alternative approximation scheme. However, this is only a by-product of the current paper. The analysis for three problems \eqref{Keller_Segel_classical}, \eqref{generalized_equation_u_sigma}, and \eqref{generalized_equation_u_epsilon_sigma} are required for the rigorous derivation of \eqref{Keller_Segel_classical}. Especially the results for regular solutions of \eqref{generalized_equation_u_sigma} and the non-local problem \eqref{generalized_equation_u_epsilon_sigma} can not be covered by any known results.

For simplicity we present the proofs only for $d\geq 3$, one can adapt the estimates, where Sobolev's inequalities are used, for the case $d=2$ and obtain easily the same results. Additionally, in this paper we denote $C$ as a generic constant which might change line to line.

The rest of the paper is organized as follows. Section \ref{final} is devoted to the analysis of problem \eqref{generalized_equation_u_sigma}, namely the proof of Theorem \ref{theorem_u_sigma}.
In Section \ref{section_u_epsilon_sigma}, we use the perturbation method to complete the proof of Theorem \ref{theorem_u_epsilon_sigma}.
Based on this, we study the compactness in the weak formulation of problem \eqref{generalized_equation_u_sigma} with respect to $\sigma$ to finish the proof of Theorem \ref{lsigma}
in section \ref{section_u}.

\section{Well-posedness of \eqref{generalized_equation_u_sigma}}
\label{final}
The well-posedness of problem \eqref{generalized_equation_u_sigma} is derived in this section. A sequence of approximated solution of \eqref{generalized_equation_u_sigma} is constructed by a  two-step fixed point argument. Afterwards, uniform estimate through a modified Bernstein type estimate for the space gradient of the solution is preformed in order to do further $H^s$ energy estimates. These provide the extension of local solution to global solution. In the end, the solvability of problem \eqref{generalized_equation_u_sigma} is obtained by compactness argument and further regularity estimates.

\subsection{Approximate solutions to the system \eqref{generalized_equation_u_sigma}. }
Our goal of this subsection is to establish the local wellposedness of the following approximate problem of the system \eqref{generalized_equation_u_sigma}:
\begin{align}
\label{u_sigma_approximation}
\begin{cases}\partial_t u^{\sigma}_{\eta} =  \nabla \cdot \left( \sigma  \nabla u^{\sigma}_{\eta} -u^{\sigma}_{\eta} \nabla \Phi * u^{\sigma}_{\eta} +  \nabla (u^{\sigma}_{\eta}+\eta)^m \right), \\
u^{\sigma}_{\eta}(0,x)=u_0^\sigma(x),\;\;\;\;x\in \R^d,\; t>0,
\end{cases}
\end{align}
where $0<\eta \leq M:= 2\|u_0^\sigma\|_{H^s(\R^d)}+1$.
The main result of this subsection is the following proposition.
\begin{proposition}\label{prop_local_approximate}
Let the assumptions in Theorem \ref{theorem_u_sigma} hold, then there exists $0<T'<\infty$ such that the approximate problem \eqref{u_sigma_approximation} possesses a unique solution $u^\sigma_\eta\in X_{T'}:= \big\{0\leq u \in L^\infty(0,T'; H^s(\R^d))  : \|u \|_{L^{\infty}(0,T'; H^s(\R^d))} \leq M,\quad s \in \big(\frac{d}{2} +3, \infty \big) \cap \N \big\}$, which satisfies
\begin{align}\label{add10}
\sup_{t\in(0,T')} \| u^{\sigma}_{\eta} \|_{L^q(\R^d)} \leq C,\;\;\; q\in [1,\infty],
\end{align}
where $C$ is a positive constant independent of $\sigma$ and $\eta$.
\end{proposition}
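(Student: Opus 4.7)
The plan is to construct $u^\sigma_\eta$ through a two-step fixed-point argument. For each $v\in X_{T'}$ (with $T'>0$ to be chosen later) I would freeze the singular kernel and consider the semi-linearized problem
\begin{align*}
\partial_t u - \sigma\Delta u - \Delta(u+\eta)^m = -\nabla\cdot(u\,\nabla\Phi*v),\qquad u(0)=u_0^\sigma.
\end{align*}
Since $s>d/2+3$, the Calder\'on--Zygmund and Hardy--Littlewood--Sobolev bounds for the Newtonian potential $\Phi$ combined with Sobolev embedding give $\nabla\Phi*v\in W^{1,\infty}(\R^d)$ uniformly in $t$, so the right-hand side is a regular drift. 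Moreover, on nonnegative solutions the diffusion coefficient satisfies $\sigma+m(u+\eta)^{m-1}\geq \sigma+m\eta^{m-1}>0$, which makes the equation uniformly parabolic. The inner fixed point, producing a unique solution $u=\Psi(v)$ of this semi-linearized equation, would then be obtained by a further linearization (freezing $(u+\eta)^{m-1}$ along a Picard iteration) and the Banach contraction principle in an energy-class space over a short time interval.

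The outer step applies Schauder's fixed-point theorem to $\Psi:X_{T'}\to X_{T'}$. Its central ingredient is the $H^s$ a priori estimate, obtained by applying $D^\gamma$ for $|\gamma|\leq s$, testing with $D^\gamma u$, and invoking Moser/Kato--Ponce-type product and composition estimates. These are available because $s>d/2$ makes $H^s$ a Banach algebra and because $u\geq 0$ keeps $r\mapsto(r+\eta)^m$ smooth on the relevant range. The outcome is a differential inequality of the form
\begin{align*}
\tfrac{d}{dt}\|u\|_{H^s}^2 + c_\eta\|\nabla u\|_{H^s}^2 \leq C\bigl(M,\|u\|_{L^\infty}\bigr)\bigl(1+\|u\|_{H^s}^2\bigr),
\end{align*}
so that Gr\"onwall's inequality preserves $\|u(t)\|_{H^s}\leq M$ provided $T'$ is chosen sufficiently small. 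Nonnegativity of $u$ follows from the weak maximum principle since the equation is in divergence form with uniformly positive diffusion and $W^{1,\infty}$ drift; compactness of $\Psi$ follows from the parabolic gain of regularity and the Aubin--Lions lemma; continuity follows from a standard energy estimate for $\Psi(v_1)-\Psi(v_2)$. Uniqueness of $u^\sigma_\eta$ in $X_{T'}$ is then routine by an $L^2$ difference argument that exploits the uniform parabolicity and the $H^s$ control of any two candidate solutions.

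For the bounds \eqref{add10}, which must be independent of $\sigma$ and $\eta$, I would argue directly on \eqref{u_sigma_approximation}. Integrating the divergence form of the equation gives mass conservation, so $\|u^\sigma_\eta(t)\|_{L^1}=\|u_0^\sigma\|_{L^1}\leq\|u_0\|_{L^1}$. At a spatial maximum one has $\nabla u=0$ and $\Delta u\leq 0$, and using $-\Delta\Phi=\delta$ yields $\partial_t\|u^\sigma_\eta\|_\infty\leq\|u^\sigma_\eta\|_\infty^2$, so after possibly shortening $T'$ in a way depending only on $\|u_0\|_\infty$ one gets a $\sigma,\eta$-uniform $L^\infty$ bound; the intermediate exponents follow by interpolation. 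The main obstacle will be the $H^s$ estimate above: one must control the commutators and compositions arising from $\Delta(u+\eta)^m$ without picking up negative powers of $\eta$, and handle the nonlocal aggregation term so that the resulting constants stay stable under the outer iteration. Once these estimates are in hand, both fixed-point arguments are standard.
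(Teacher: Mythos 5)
Your plan is sound but takes a genuinely different route from the paper's, reversing the roles of Schauder and Banach. The paper freezes the aggregation drift $\nabla\Phi*\xi$ for the \emph{outer} (Banach contraction) step and, inside, freezes the diffusion coefficient $(\rho+\eta)^{m-1}$ for an \emph{inner} Schauder argument, with compactness supplied by Aubin--Lions on expanding balls plus a first-moment bound $\int u|x|\,dx<\infty$ to cope with the unbounded domain; you instead do Schauder on the outer aggregation step and a Picard/Banach iteration freezing $(u+\eta)^{m-1}$ inside. Your route is viable, but the Picard contraction for the quasilinear diffusion loses a derivative (the commutator term involves $\nabla u_2$), so it must be carried out in a weak norm such as $L^\infty(0,T';L^2)$ while the $H^s$ bound is propagated separately as an a priori estimate --- your phrase ``in an energy-class space over a short time interval'' gestures at this but should be made explicit, since it is exactly the point the paper sidesteps by using Schauder. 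Conversely, your outer Schauder step needs the same unbounded-domain device for Aubin--Lions that the paper spells out. For \eqref{add10} the approaches also diverge: the paper invokes Moser's iteration, which absorbs the aggregation via the degenerate diffusion term and thus yields a bound valid on any finite time interval (needed later when extending the solution globally), whereas your maximum-principle ODE $\partial_t\|u\|_\infty\leq\|u\|_\infty^2$ gives a $\sigma,\eta$-independent bound only up to a time $T'<1/\|u_0\|_\infty$. For the local statement of Proposition~\ref{prop_local_approximate} this is enough, but if you kept your version you would need to revisit \eqref{add10} once the solution is extended to the full $[0,T]$ in Proposition~\ref{global}.
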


\begin{proof}

The proof  will be given by a two-step fixed point argument.

We will first prove that for a fixed $\xi \in X_{T_1}$ ($T_1$ to be determined), there exists a unique local solution $w\in L^{\infty}(0,T_1; H^s(\R^d))  \cap  L^2(0,T_1; H^{s+1}(\R^d))$
to the following system
\begin{align}
	\label{u_delta_xi_equation}
	\begin{cases}\partial_t w =  \nabla \cdot \left( \sigma  \nabla  w -w \nabla \Phi * \xi +  \nabla (w+\eta)^m \right), \\
		w(0,x)=u_0^\sigma(x), \qquad x\in \R^d, t>0.
	\end{cases}
\end{align}

This is done by a fixed point argument in the following:

At the beginning, we introduce a space
\begin{align}
\label{space_Y_T}
Y_T:= &\big\{0\leq u \in L^4(0,T; H^s(\R^d))  \cap  L^{\infty}(0,T; H^{s-1}(\R^d)): \|u \|_{L^4(0,T; H^s(\R^d))  \cap L^{\infty}(0,T; H^{s-1}(\R^d))} \leq M\big\}. \end{align}

For fixed $\xi \in X_T$ and $\rho \in Y_T$, we consider the following linear equation
\begin{align}
\label{u_delta_xi_rho_equation}
\begin{cases}\partial_t w =  \nabla \cdot \left( \sigma  \nabla  w -w \nabla \Phi * \xi +  m (\rho+ \eta)^{m-1} \nabla w \right), \\
w(0,x)=u_0(x),\qquad x\in \R^d, t>0.
\end{cases}
\end{align}
By the theory of parabolic differential equations, it follows that there exists a weak solution $w$ to \eqref{u_delta_xi_rho_equation}.
In addition, it is easy to see that $w$ is non-negative.
Indeed, we multiply \eqref{u_delta_xi_rho_equation} by $w^-:=\min\{0,w \}$ and integrate it over $\R^d$ to obtain that
\begin{align*}
\frac{1}{2} \frac{d}{dt} \| w^- \|^2_{L^2(\R^d)}
&\leq -\frac{1}{2}  \int_{\R^d} \left(\sigma +m (\rho+ \eta)^{m-1} \right) |\nabla w^- |^2 dx + \frac{1}{2 \sigma} \|\nabla \Phi * \xi \|_{L^{\infty}(0,T; L^{\infty} (\R^d)) }^2 \int_{\R^d}  |w^-|^2 dx\\
&\leq \frac{1}{2 \sigma} \|\nabla \Phi * \xi \|_{L^{\infty}(0,T; L^{\infty} (\R^d) ) }^2 \int_{\R^d}  |w^-|^2 dx
\leq \frac{CM}{2\sigma}\int_{\R^d}|w^-|^2dx,
\end{align*}
where we have used the fact that $\| \nabla \Phi * \xi \|_{L^{\infty}(0,T; H^s(\R^d))} \leq C \|  \xi \|_{L^{\infty}(0,T; H^s(\R^d))}  \leq CM$.
Combining it with the Gr\"{o}nwall's inequality, we deduce that
\begin{align*}
\underset{t\in (0,T')}{\sup} \| w^- (t,\cdot)\|_{L^2(\R^d)} \leq 0.
\end{align*}

Multiplying  \eqref{u_delta_xi_rho_equation} by $w$ and integrating it over $\R^d$, we obtain that
\begin{align}
\label{estim_w_l_2}
\frac{1}{2} \frac{d}{dt} \int_{\R^d} w^2 dx + \sigma \int_{\R^d} |\nabla w|^2 dx + m \int_{\R^d} (\rho+ \eta)^{m-1} |\nabla w|^2 dx
\leq \frac{\sigma}{2}  \int_{\R^d} |\nabla w|^2 dx + C M^2 \int_{\R^d}  |w|^2 dx,
\end{align}
where $C$ is a positive constant which depends on $\sigma$, $\eta$, $m$, $d$ and $s$.
We take the $D$ derivative of \eqref{u_delta_xi_rho_equation}, multiply the result equality by $D w$ and integrate it over $\R^d$, we deduce that
\begin{align}
\label{estim_w_der_l_2}
&\frac{1}{2} \frac{d}{dt} \int_{\R^d} |  D  w |^2 dx + \sigma  \int_{\R^d} |\nabla D  w |^2 dx
 + m \int_{\R^d}  (\rho+ \eta)^{m-1}   | \nabla  D   w|^2 \, dx\nonumber\\
\leq& \frac{\sigma}{2} \int_{\R^d} |\nabla D  w|^2 dx + C M^{2(m-1)}   \int_{\R^d} |\nabla w|^2 dx
+CM^2\int_{\R^d}|w|^2dx.
\end{align}

We let $\alpha$ be a multi-index such that $2\leq |\alpha| \leq s$ and get
\begin{align*}
&\frac{1}{2} \frac{d}{dt} \int_{\R^d} | D^{\alpha}w |^2 dx + \sigma  \int_{\R^d} |\nabla D^{\alpha}w |^2 dx \\
=&   \int_{\R^d}   D^{\alpha} (w \nabla \Phi * \xi ) \cdot\nabla D^{\alpha} w \, dx
 - m \int_{\R^d}  D^{\alpha} ((\rho+ \eta)^{m-1} \nabla w)  \cdot \nabla D^{\alpha} w \, dx
=:I_1+I_2.
\end{align*}

Now we are going to estimate the terms on the right-hand side. For the first term $I_1$
\begin{align*}
I_1
&\leq \frac{\sigma}{4} \int_{\R^d} | \nabla D^{\alpha} w|^2 \, dx + \frac{1}{\sigma} \int_{\R^d} |  D^{\alpha} (w \nabla \Phi * \xi )|^2 dx\\
&\leq   \frac{\sigma}{4} \int_{\R^d} | \nabla D^{\alpha} w|^2 \, dx + C \left(\|w \|_{L^{\infty}(\R^d)} \| D^{\alpha} ( \nabla \Phi * \xi )\|_{L^2(\R^d)} + \| \nabla \Phi * \xi  \|_{L^{\infty}(\R^d)} \| D^{\alpha} w \|_{L^2(\R^d)} \right)^2\\
&\leq  \frac{\sigma}{4} \int_{\R^d} | \nabla D^{\alpha} w|^2 \, dx + CM^2 \| w\|_{H^s(\R^d)}^2.
\end{align*}
For the second term $I_2$
\begin{align*}
 I_2=& -m \int_{\R^d}   (\rho+ \eta)^{m-1} D^{\alpha} \nabla w  \cdot \nabla D^{\alpha} w \, dx
 -  m \int_{\R^d} \left( D^{\alpha} ((\rho+ \eta)^{m-1} \nabla w) -  (\rho+ \eta)^{m-1} D^{\alpha} \nabla w  \right) \cdot \nabla  D^{\alpha} w   \, dx\\
 \leq& - m \int_{\R^d}(\rho+ \eta)^{m-1} |\nabla D^{\alpha} w|^2 dx + \frac{\sigma}{4} \int_{\R^d} |\nabla D^{\alpha} w|^2  dx\\
  &+ C \left(\|\nabla  (\rho+ \eta)^{m-1} \|_{L^{\infty}(\R^d)} \| D^{\alpha} w\|_{L^2(\R^d)} + \| \nabla w \|_{L^{\infty}(\R^d)} \| D^{\alpha} (\rho+ \eta)^{m-1} \|_{L^2(\R^d)} \right)^2.
\end{align*}
From the estimates for $I_1$, and $I_2$, \eqref{estim_w_l_2} and \eqref{estim_w_der_l_2}, we deduce that
\begin{align*}
&\frac{1}{2} \frac{d}{dt} \| w\|_{H^s(\R^d)}^2 + \frac{\sigma}{2} \sum_{|\alpha|=2}^s \int_{\R^d} |\nabla D^{\alpha} w|^2 dx + m \sum_{|\alpha|=2}^s \int_{\R^d} (\rho+ \eta)^{m-1} |\nabla D^{\alpha} w|^2 dx \notag \\
\leq& C M^{2(m-1)}  \| w\|_{H^s(\R^d)}^2 + C \|\nabla (\rho+ \eta)^{m-1} \|_{L^{\infty}(\R^d)}^2 \|  w\|_{H^s(\R^d)}^2 + C \sum_{|\alpha|=2}^s \| D^{\alpha} (\rho+ \eta)^{m-1} \|_{L^2(\R^d)}^2 \| w\|_{H^s(\R^d)}^2.
\end{align*}
With the help of Gr\"{o}nwall's lemma, we obtain that
\begin{align}\label{add3}
\| w(t)\|_{H^s(\R^d)}^2 \leq \| u_0 \|_{H^s(\R^d)}^2 \exp \left( C \int_0^t  \big( M^{2(m-1)} + \|\nabla  (\rho+ \eta)^{m-1} \|_{L^{\infty}(\R^d)}^2 +  \sum_{|\alpha|=2}^s \| D^{\alpha} (\rho+ \eta)^{m-1} \|_{L^2(\R^d)}^2  \big)  d \tau \right).
\end{align}
Since $\rho \in Y_T$, we obtain that
\begin{align}\label{add2}
\int_0^t  \|\nabla  (\rho+ \eta)^{m-1} \|_{L^{\infty}(\R^d)}^2   d \tau
& \leq C \underset{\tau \in (0,t) }{ \text{ess sup} } \|  (\rho+ \eta)^{m-2} \|_{L^{\infty}(\R^d)}^2 \int_0^t  \|  \rho \|_{H^s(\R^d)}^2 d \tau \\
&\leq C \underset{\tau \in (0,t) }{ \text{ess sup} } \|  (\rho+ \eta)^{m-2}  \|_{L^{\infty}(\R^d)}^2  t^{\frac{1}{2}}  \|\rho \|_{L^4(0,T; H^s(\R^d))}^2 \notag\\
&\leq C M^{2(m-1)}t^{\frac{1}{2}}.\notag
\end{align}
In addition,
\begin{align*}
&\int_0^t \| D^{\alpha} (\rho+ \eta)^{m-1} \|_{L^2(\R^d)}^2   d \tau\\
 \leq& C \int_0^t \left( \| (\rho+ \eta)^{m-2} D^{\alpha} (\rho+ \eta) \|_{L^2(\R^d)}^2 + \| D^{\alpha}((\rho+ \eta)^{m-2} (\rho+ \eta)) - (\rho+ \eta)^{m-2}D^{\alpha} (\rho+ \eta)  \|_{L^2(\R^d)}^2 \right) d \tau \\
\leq& C \underset{\tau \in (0,t) }{ \text{ess sup} }  \|(\rho+ \eta)^{m-2} \|_{L^{\infty}(\R^d)}^2 \int_0^t \| D^{\alpha} \rho \|_{L^2(\R^d)}^2 d \tau \\
&+ C  \int_0^t  \left(\|\nabla (\rho+ \eta)^{m-2} \|_{L^{\infty}(\R^d)} \| D^{\alpha-1} \rho \|_{L^2(\R^d)}  + \|  (\rho+ \eta) \|_{L^{\infty}(\R^d)} \| D^{\alpha} (\rho+ \eta)^{m-2} \|_{L^2(\R^d)} \right)^2 d \tau\\
\leq& C M^{2(m-1)} t^{\frac{1}{2}} + C M^2  \int_0^t  \| D^{\alpha} (\rho+ \eta)^{m-2} \|_{L^2(\R^d)}^2  d\tau.
\end{align*}
By iteration technique, we obtain that
\begin{align}\label{add1}
\int_0^t \| D^{\alpha} (\rho+ \eta)^{m-1} \|_{L^2(\R^d)}^2   d \tau &\leq C M^{2(m-1)} t^{\frac{1}{2}} + C M^{2\lfloor m \rfloor} \int_0^t  \| D^{\alpha} (\rho+ \eta)^{m- \lfloor m \rfloor } \|_{L^2(\R^d)}^2  d\tau \\
& \leq C (M^{2(m-1)} + M^{2( \lfloor m \rfloor  + s)} ) t^{\frac{1}{2}}.\notag
\end{align}
Plugging \eqref{add2}-\eqref{add1} into \eqref{add3}, we have
\begin{align*}
\| w(t)\|_{ L^{\infty} ( 0, T_1; H^s(\R^d) )} \leq \| u_0 \|_{H^s(\R^d)}
\exp \left( C M^{2(m-1)} T_1+C M^{2(m-1)} T_1^\frac{1}{2} +C M^{2( \lfloor m \rfloor  + s)}  T_1^{\frac{1}{2}} \right),
\end{align*}
which is less or equal than $M$ if we choose $T_1>0$ small enough.
With this observation at hand, we can define the following operator:
\begin{align*}
\mathcal{T}:  Y_{T_1}  &\rightarrow  X_{T_1} \subset Y_{T_1}, \\
\rho &\mapsto w.
\end{align*}

We need to prove that $\mathcal{T}$ is a compact operator.
Let $(\rho_i)_{i\in \N}$ be a uniformly bounded sequence in $Y_{T_1}$ and $w_i := \mathcal{T}(\rho_i)$ for $i\in \N$.
From \eqref{add3}, we get
\begin{align}
&\|w_i \|_{L^{\infty}(0,T_1; H^s(\R^d))} + \|w_i \|_{L^2(0,T_1; H^{s+1}(\R^d))} \leq C,\label{add4}\\
&\|\partial_t w_i\|_{L^2(0,T_1; H^{s-1}(\R^d)) } \leq C.\label{add5}
\end{align}
Since we cannot apply Aubin-Lions lemma in the whole space $\R^d$, we  should prove  that $\int_{\R^d} w(\cdot, x)|x|dx < \infty$.
It is easy to see
 \begin{align*}
\frac{d}{dt} \int_{\R^d} w |x| dx &\leq  \sigma  \int_{\R^d} |\nabla w|dx + \int_{\R^d} |\nabla  \Phi * \xi w |dx + m \int_{\R^d} (\rho+ \eta)^{m-1} |\nabla w|dx\\
 &\leq \sigma \| w\|_{H^s(\R^d)} + \|\nabla  \Phi * \xi\|_{L^2(\R^d)} \|w\|_{L^2(\R^d)} + m \| \rho + \eta \|_{L^{2(m-1)}(\R^d)}^{m-1} \|\nabla w\|_{L^2(\R^d)}\\
  &\leq \sigma M +M^2 + C(2M)^m \leq CM^m.
 \end{align*}
With this observation at hand, and combining it with Gr\"onwall's inequality, we get
\begin{align*}
\underset{t \in (0,T_1) }{ \text{ess sup} }\int_{\R^d} w |x| dx \leq \left( C M^m + \int_{\R^d} u_0^\sigma |x| dx \right),
\end{align*}
which is bounded since $\int_{\R^d} u_0^\sigma |x| dx < \infty$.
This together with \eqref{add4}-\eqref{add5} make it possible to use Aubin-Lions lemma to infer
\begin{align*}
w_i \rightarrow \bar{w} \ \text{in} \ L^{\infty}(0,T_1; H^{s-1}(\R^d)) \ \text{,} \
w_i \rightarrow \bar{w} \ \text{in} \ L^2(0,T_1; H^s(\R^d)) \ \text{and} \ w_i \overset{*}{\rightharpoonup} \bar{w} \ \text{in} \ L^{\infty}(0,T_1; H^s(\R^d)).
\end{align*}
Thanks to interpolation inequality and weak lower semicontinuity of convex functions, we obtain that $\forall \gamma \in (0, 1)$
\begin{align*}
\|w_i -\bar{w} \|_{L^{\infty}(0,T_1; H^{s-\gamma}(\R^d))} &\leq \|w_i - \bar{w} \|_{L^{\infty}(0,T_1; H^{s-1}(\R^d))}^{1-\theta} \|w_i -\bar{w} \|_{L^{\infty}(0,T_1; H^{s}(\R^d))}^{\theta}\\
& \leq C \|w_i -\bar{w} \|_{L^{\infty}(0,T_1; H^{s-1}(\R^d))}^{1-\theta} \rightarrow 0 \ \text{as} \ i \rightarrow \infty,
\end{align*}
where $\theta$ satisfies $s- \gamma = (1-\theta)(s-1) + \theta s$.
This implies that
\begin{align*}
w_i \rightarrow \bar{w} \ \text{in} \ L^{\infty}(0,T_1; H^{s-\frac{1}{2}}(\R^d))\cap L^2(0,T_1; H^{s+\frac{1}{2}}(\R^d))  \ \ \text{as} \ i \rightarrow \infty,
\end{align*}
and combining interpolation inequality, it follows that
\begin{align*}
w_i \rightarrow \bar{w} \ \text{in}  \  L^4(0,T_1; H^s (\R^d)) \ \text{as} \ i \rightarrow \infty.
\end{align*}
This proves that $\mathcal{T} : Y_{T_1} \rightarrow Y_{T_1}$ is a compact operator.
Therefore, we conclude that there exists a local solution of \eqref{u_delta_xi_rho_equation} with $\rho$ replaced by $w$ which lies in $X_{T_1}$
with the help of Schauder's fixed point theorem.

It remains to prove that the fixed point of $\mathcal{T}$ is unique. We assume that $w_1$ and $w_2$ are two solutions of
\eqref{u_delta_xi_rho_equation} with $\rho$ replaced by $w$, i.e.
\begin{align*}
\begin{cases}\partial_t (w_1 -w_2) =  \nabla \cdot \big( \sigma  \nabla  (w_1 -w_2) -(w_1 -w_2) \nabla \Phi * \xi +  \nabla \left( (w_1+ \eta)^{m}  - (w_2+ \eta)^{m}  \right) \big), \\
(w_1-w_2)(0,x)=0,\qquad x\in \R^d,t>0.
\end{cases}
\end{align*}
We multiply the equation above by $w_1-w_2$ and integrate it over $\R^d$ to obtain that
\begin{align}\label{add8}
&\frac{1}{2} \frac{d}{dt} \int_{\R^d} |w_1-w_2|^2 dx+ \sigma \int_{\R^d} |\nabla (w_1-w_2)|^2 dx \\
=& \int_{\R^d}(w_1 -w_2) \nabla \Phi * \xi  \cdot \nabla (w_1-w_2)dx - \int_{\R^d}  \nabla \left( (w_1+ \eta)^{m}  - (w_2+ \eta)^{m}  \right)\cdot \nabla (w_1-w_2) dx
\notag\\
=&:J_1+J_2.\notag
\end{align}
For the term $J_2$ we get the following estimate,
\begin{align}\label{add6}
J_2=&- \int_{\R^d} \nabla \left( \int_0^1 m (z w_1+(1-z)w_2 + \eta)^{m-1} dz (w_1-w_2)\right) \cdot \nabla (w_1-w_2) dx \\
\leq&  - \int_{\R^d}  \int_0^1 m (z w_1+(1-z)w_2 + \eta )^{m-1} dz |\nabla (w_1-w_2)| ^2 dx \notag\\
&+C\left\| \int_0^1(z w_1+(1-z) w_2 + \eta )^{m-2} (z \nabla w_1+(1-z) \nabla w_2) dz \right\|_{L^{\infty}(\R^d)}  \cdot \int_{\R^d} |w_1-w_2| \ |\nabla(w_1-w_2)| dx  \\
\leq&  C M^{m-1} \int_{\R^d} |w_1-w_2| \ |\nabla(w_1-w_2)| dx\notag\\
\leq&  \frac{\sigma}{4} \int_{\R^d} |\nabla(w_1-w_2)|^2 dx + C M^{m-1} \int_{\R^d} |w_1-w_2|^2 dx.\notag
\end{align}
For the term $J_1$,
\begin{align}\label{add7}
J_1  \leq \frac{\sigma}{4} \int_{\R^d} |\nabla(w_1-w_2)|^2 dx + C M^2 \int_{\R^d} |w_1-w_2|^2 dx.
\end{align}
Plugging \eqref{add6}-\eqref{add7} into \eqref{add8}, we have
\begin{align}\label{add9}
\frac{1}{2} \frac{d}{dt} \int_{\R^d} |w_1-w_2|^2 dx   +\frac{\sigma}{2} \int_{\R^d}  |\nabla(w_1-w_2)|^2 dx\leq C(M^{m-1}+M^2) \int_{\R^d} |w_1-w_2|^2 dx.
\end{align}
Taking advantage of Gr\"onwall's inequality and of the initial data of $w_1-w_2$ being 0, we deduce that
\begin{align*}
\underset{t \in (0,T_1) }{ \text{ess sup} }\int_{\R^d} |w_1(t, x)-w_2(t, x)|^2 dx \leq 0.
\end{align*}
It implies that the system \eqref{u_delta_xi_rho_equation} with $\rho$ replaced by $w$ possesses a unique local  solution $w\in X_{T_1}$.
We have proved so far that for a fixed $\xi \in X_{T_1}$, there exists a unique local solution $w\in L^{\infty}(0,T_1; H^s(\R^d))  \cap  L^2(0,T_1; H^{s+1}(\R^d))$
to the system \eqref{u_delta_xi_equation}.

Next, we proceed with  the second-step fixed point argument.

Using the same technique as for the $H^s$ estimates of the system \eqref{u_delta_xi_rho_equation} which we obtained in  \eqref{add3}, we deduce that the following operator is well-defined
\begin{align*}
\mathcal{T}':  X_{T_1} &\rightarrow  X_{T_1},\\
\xi &\mapsto v.
\end{align*}
It remains to prove that $\mathcal{T}'$ is a contraction. Let  $v_1, v_2 \in X_{T_1}$ be solutions of \eqref{u_delta_xi_equation} for $\xi_1, \xi_2 \in X_{T_1}$ respectively. Using the same argument as the one used in the proof of \eqref{add9}, we can get
\begin{align*}
&\frac{1}{2}  \int_{\R^d} |v_1(t, \cdot)-v_2(t, \cdot)|^2 dx   +\frac{\sigma}{4} \int_0^t \int_{\R^d}  |\nabla(v_1-v_2)|^2 dx ds \\
\leq& C(M^{m-1}+M^2)  \int_0^t \int_{\R^d} |v_1-v_2|^2 dx ds  + C M^2 t  \ \underset{\tau \in (0,t) }{ \text{ess sup} }  \int_{\R^d} |\xi_1-\xi_2|^2 dx.
\end{align*}
With the help of Gr\"onwall's inequality, we have
\begin{align*}
\sup_{t\in (0,T_2)}  \int_{\R^d} |v_1(t, x)-v_2(t, x)|^2 dx \leq CM^2 \exp( CT_2(M^{m-1}+M^2) ) T_2 \| \xi_1 - \xi_2 \|_{L^{\infty}(0,T_2; L^2(\R^d))}^2
\end{align*}
If we choose $T_2>0$ small enough, we obtain that $\mathcal{T}'$ is a contraction. Therefore, \eqref{u_sigma_approximation}  possesses a unique local solution
$u_\eta^\sigma\in X_T$.
By Moser's iteration technique, it is easy to obtain \eqref{add10}.
As a consequence, we complete the proof of Proposition \ref{prop_local_approximate}.
\end{proof}

\subsection{\texorpdfstring{$L^{\infty}$ Estimate of $\nabla u^\sigma_\eta$}{L infty estimates gradient u sigma approx} }
\label{Section_estimate_grad_u_sigma_eta}
In order to derive global well-posedness of $u^\sigma_\eta$, we have to infer the $L^\infty$ estimate of $\nabla u^\sigma_\eta$.
Compared with \citep{sugiyama2007time}, we use $\sigma \Delta u $ instead of  $\Delta (u +\eta)^m$ to deal with some terms on the left-side hand of inequality in order
to derive uniform estimate independent of $\eta$.
Our goal of this subsection is to prove the following proposition:
\begin{proposition}\label{grau}
Let the assumptions in Theorem \ref{theorem_u_sigma} hold and $u^{\sigma}_{\eta}$ be the weak solution to \eqref{u_sigma_approximation}. Then it holds
\begin{align*}
\sup_{t\in(0,T')} \| \nabla u^{\sigma}_{\eta}(t, \cdot)  \|_{L^{\infty}(\R^d)} \leq C,
\end{align*}
where $C$ is a positive constant which depends on $d$, $\sigma$, $m$, $\|u_0 \|_{L^1(\R^d) \cap L^{\infty}(\R^d)}$ and $\| \nabla u_0^\sigma \|_{L^1(\R^d) \cap L^{\infty}(\R^d)}$ but is independent of $\eta$.
\end{proposition}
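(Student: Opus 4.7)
The plan is a Bernstein-type argument on $\psi := |\nabla u^\sigma_\eta|^2$ combined with a parabolic maximum principle, designed so that the closing step rests on the uniform viscosity $\sigma\Delta v$ rather than on the degenerate coefficient $m(v+\eta)^{m-1}$. This is the precise modification over Sugiyama's Lemma 13 and is what makes the constant independent of $\eta$.

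Write $v = u^\sigma_\eta$ for brevity. Using $-\Delta(\Phi\ast v) = v$, the equation \eqref{u_sigma_approximation} can be written as
\begin{align*}
\partial_t v = \sigma \Delta v - \nabla v \cdot \nabla \Phi \ast v + v^2 + \Delta(v+\eta)^m,
\end{align*}
and Proposition \ref{prop_local_approximate} together with standard Calder\'on--Zygmund estimates yields $\|v\|_{L^\infty}$ and $\|\nabla\Phi\ast v\|_{L^\infty}$ bounded uniformly in $\eta$. Differentiating this equation in $x_i$, multiplying by $2\partial_i v$, summing in $i$, and using the identity $\nabla v \cdot \nabla \Delta v = \tfrac{1}{2}\Delta \psi - |D^2 v|^2$, I obtain schematically
\begin{align*}
\partial_t \psi = \bigl(\sigma + m(v+\eta)^{m-1}\bigr) \Delta \psi - 2\bigl(\sigma + m(v+\eta)^{m-1}\bigr)|D^2 v|^2 + R(v,\nabla v,\Delta v),
\end{align*}
where $R$ gathers (i) lower-order terms $2v\,\nabla v\cdot\nabla v$ and convolution contributions controlled by $\|v\|_{L^\infty}$, and (ii) chain-rule leftovers from $\Delta(v+\eta)^m$, the dangerous one being $2m(m-1)(v+\eta)^{m-2}\,\psi\,\Delta v$, which for $m>2$ cannot be dominated by the degenerate coercivity as $\eta\downarrow 0$.

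The central trick is to eliminate this bad term by substituting $\sigma \Delta v$ from the equation itself, namely $\sigma\Delta v = \partial_t v + \nabla v\cdot \nabla\Phi\ast v - v^2 - \Delta(v+\eta)^m$, which after algebra recasts the dangerous contribution as a combination of $\psi\,\partial_t v / \sigma$, polynomial-in-$v$ terms, and a term proportional to $\psi\,\Delta(v+\eta)^m$ that can be put in divergence form or absorbed into the coercivity $\sigma|D^2 v|^2$ by Young's inequality with $\sigma^{-1}$-dependent constants. Since $\|v\|_{L^\infty}$ is controlled uniformly, $R$ is then majorised by $C(1 + \psi + \psi^{3/2})$ plus terms of the form $\tfrac{\sigma}{2}|D^2 v|^2 + C\psi^2$ that are absorbed.

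Finally I apply the parabolic maximum principle: at a spatial maximum $(t_\ast,x_\ast)$ of $\psi(t_\ast,\cdot)$ one has $\nabla\psi=0$ and $\Delta\psi\le 0$, so both diffusion contributions drop with favourable sign and the ODE
\begin{align*}
\tfrac{d}{dt}\|\psi(t,\cdot)\|_{L^\infty(\R^d)} \le C_1 + C_2\,\|\psi(t,\cdot)\|_{L^\infty(\R^d)}
\end{align*}
follows, with $C_1,C_2$ depending only on $\sigma,m,d,\|u_0\|_{L^1\cap L^\infty}$ and $\|\nabla u_0^\sigma\|_{L^1\cap L^\infty}$. Attainment of the maximum on $\R^d$ is justified by Proposition \ref{prop_local_approximate}: since $s>d/2+3$, Sobolev embedding gives $\nabla v \in C^0$ with decay at infinity. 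Gronwall then yields the claim. The main obstacle is the elimination of the bad term $(v+\eta)^{m-2}\psi\Delta v$ uniformly in $\eta$; this is the step that departs genuinely from \cite{sugiyama2007time} and whose rigorous bookkeeping---choosing how much of $\sigma|D^2 v|^2$ to spend on each Young inequality---is the delicate part of the argument. A subsidiary issue, routine once the ODE is in hand, is that an $L^p$ test with $p\to\infty$ can be used as an alternative to the pointwise maximum principle if one wishes to avoid appealing to Sobolev embedding for the attainment.
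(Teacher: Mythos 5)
Your approach has a genuine gap: you work directly with $\psi = |\nabla u^\sigma_\eta|^2$ and rely on the uniform viscosity alone, but the evolution equation for $\psi$ does not have the structure needed for your closing step. Computing carefully, one finds (at a spatial maximum of $\psi$, where $\nabla\psi=0$ and $\Delta\psi\le0$) that
\begin{align*}
\partial_t\psi \le -2\bigl(\sigma + m(v+\eta)^{m-1}\bigr)|D^2 v|^2 + 2m(m-1)(v+\eta)^{m-2}\psi\,\Delta v + 2m(m-1)(m-2)(v+\eta)^{m-3}\psi^2 + (\text{l.o.t.}),
\end{align*}
where $v=u^\sigma_\eta$. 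The last term is a \emph{positive} multiple of $\psi^2$ for $m>3$, and the middle term, after Young's inequality with $|\Delta v|\le\sqrt{d}\,|D^2 v|$, also spawns a \emph{positive} $C\psi^2$ term that exactly exhausts the coercivity $-2\sigma|D^2v|^2$. Your claim that these $C\psi^2$ terms are ``absorbed'' is therefore unfounded: there is no negative $\psi^2$ contribution in the raw $\psi$-evolution to absorb against, and you end with $\frac{d}{dt}\|\psi\|_\infty \le C_1 + C_2\|\psi\|_\infty + C_3\|\psi\|_\infty^2$, which permits finite-time blow-up. The substitution $\sigma\Delta v = \partial_t v + \nabla v\cdot\nabla\Phi*v - v^2 - \Delta(v+\eta)^m$ does not repair this: it introduces a $\psi\,\partial_t v/\sigma$ contribution for which you have no a priori $L^\infty$ control (indeed, $\|\partial_t v\|_{L^\infty}$ would typically be a \emph{consequence} of the gradient bound, not an input), and the $\psi\,\Delta(v+\eta)^m$ piece re-generates a $\psi\Delta v$ term so the iteration does not close.

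The paper's proof is built precisely to manufacture the negative quadratic term you lack. It introduces the change of variable $u^\sigma_\eta = f(\bar{\mathfrak{u}})$ with $f'(s) = 4e\tilde\omega\,e^{-s^2}$, which has two crucial consequences. First, the chain-rule identity $\frac{f'''f'-(f'')^2}{(f')^2} = -2$ produces the term $-2(\sigma + m(u^\sigma_\eta+\eta)^{m-1})|\nabla\bar{\mathfrak{u}}|^4$ in the evolution of $|\nabla\bar{\mathfrak{u}}|^2$, and together with $f''<0$ this makes the net coefficient of $|\nabla\bar{\mathfrak{u}}|^4$ strictly negative (see the display labelled \eqref{coe}); this is exactly the good quadratic term that absorbs everything else. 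Second, the small prefactor $f'\le 4e\tilde\omega$, with $\tilde\omega$ chosen proportional to $\sigma$ in \eqref{def_var_sigma}, makes the coefficient of the dangerous $\Delta\bar{\mathfrak{u}}\cdot|\nabla\bar{\mathfrak{u}}|^2$ term small, so that after Young's inequality its $|\nabla\bar{\mathfrak{u}}|^4$ contribution is dominated by the viscosity-generated good terms. Both features are essential and both vanish in your direct $\psi$-approach. The price the paper pays for this change of variable is that $f$ is only locally invertible, which is why $\R^d$ is partitioned into level sets $\tilde\Omega_k$ with cut-offs $\phi_k$, and why $L^r$ estimates plus Moser iteration replace the pointwise maximum principle; those are incidental, whereas the Bernstein substitution $u = f(\bar{\mathfrak{u}})$ is the irreplaceable core of the argument.
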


The rest of this subsection is devoted to the proof of Proposition \ref{grau}.
\subsubsection{Some basic computations.}
Let $u^{\sigma}_{\eta} = f(\bar{\mathfrak{u}})$.
By simple calculations, we have
\begin{align}
\label{equation_u_frac}
\partial_t \bar{\mathfrak{u}} =& \left( \sigma + m(u^{\sigma}_{\eta}+\eta)^{m-1} \right) \Delta \bar{\mathfrak{u}} + \left( \sigma + m(u^{\sigma}_{\eta}+\eta)^{m-1} \right)   \frac{f''(\bar{\mathfrak{u}})}{f'(\bar{\mathfrak{u}})} |\nabla \bar{\mathfrak{u}}|^2\\
& +\frac{m(m-1)(u^{\sigma}_{\eta}+\eta)^{m-2}  |\nabla u^{\sigma}_{\eta}|^2}{f'(\bar{\mathfrak{u}})} - \frac{\nabla \cdot ( u^{\sigma}_{\eta} \nabla \Phi * u^{\sigma}_{\eta} )}{f'(\bar{\mathfrak{u}})} . \notag
\end{align}

We take the derivative of \eqref{equation_u_frac} with respect to $x_k$ for $k\in \{1, \ldots , d \}$ to obtain that
\begin{align}
\label{equation_deriv_u_frac}
\partial_{x_k} \partial_t \bar{\mathfrak{u}} =&  m(m-1)(u^{\sigma}_{\eta}+\eta)^{m-2} f'(\bar{\mathfrak{u}}) \partial_{x_k} \bar{\mathfrak{u}}
 \Delta \bar{\mathfrak{u}} + \left( \sigma + m(u^{\sigma}_{\eta}+\eta)^{m-1} \right)   \partial_{x_k} \Delta \bar{\mathfrak{u}}\\
 &+\left( \sigma + m(u^{\sigma}_{\eta}+\eta)^{m-1} \right)    |\nabla \bar{\mathfrak{u}}|^2 \left( \frac{f'''(\bar{\mathfrak{u}})f'(\bar{\mathfrak{u}}) - (f''(\bar{\mathfrak{u}}))^2 }{(f'(\bar{\mathfrak{u}}))^2 }  \right) \partial_{x_k} \bar{\mathfrak{u}} \notag \\
 &+ 2\left( \sigma + m(u^{\sigma}_{\eta}+\eta)^{m-1} \right)   \frac{f''(\bar{\mathfrak{u}})}{f'(\bar{\mathfrak{u}})}  \nabla \bar{\mathfrak{u}} \cdot \nabla \partial_{x_k} \bar{\mathfrak{u}} \notag \\
 &+ m(m-1)(m-2)(u^{\sigma}_{\eta}+\eta)^{m-3} (f'(\bar{\mathfrak{u}}))^2   |\nabla \bar{\mathfrak{u}}|^2  \partial_{x_k} \bar{\mathfrak{u}} \notag \\
&+ 2m(m-1)(u^{\sigma}_{\eta}+\eta)^{m-2}  \nabla \bar{\mathfrak{u}} \cdot \nabla \partial_{x_k} u^{\sigma}_{\eta}  \notag \\
&- \frac{1}{f'(\bar{\mathfrak{u}})} \left( \nabla \partial_{x_k}   u^{\sigma}_{\eta} \cdot \nabla \Phi * u^{\sigma}_{\eta} + \nabla u^{\sigma}_{\eta} \cdot \nabla \partial_{x_k}  \Phi * u^{\sigma}_{\eta}  +  \partial_{x_k} u^{\sigma}_{\eta} \Delta  \Phi * u^{\sigma}_{\eta} + u^{\sigma}_{\eta} \partial_{x_k}   \Delta  \Phi * u^{\sigma}_{\eta}  \right) \notag \\
&+  \frac{f''(\bar{\mathfrak{u}})}{(f'(\bar{\mathfrak{u}}))^2} \partial_{x_k} \bar{\mathfrak{u}} \left( \nabla u^{\sigma}_{\eta} \cdot \nabla \Phi * u^{\sigma}_{\eta} + u^{\sigma}_{\eta} \Delta   \Phi * u^{\sigma}_{\eta} \right).    \notag
\end{align}

Multiplying \eqref{equation_deriv_u_frac} by $\partial_{x_k}  \bar{\mathfrak{u}}$ and denoting $|\partial_{x_k}  \bar{\mathfrak{u}}|^2$ by $\bar{\mathfrak{U}}_k$, we deduce that
\begin{align}
\label{equation_deriv_capital_u_frac}
\frac{1}{2} \partial_t  \bar{\mathfrak{U}}_k =&  m(m-1)(u^{\sigma}_{\eta}+\eta)^{m-2}  f'(\bar{\mathfrak{u}}) \Delta \bar{\mathfrak{u}} \ \bar{\mathfrak{U}}_k  + \left( \sigma + m(u^{\sigma}_{\eta}+\eta)^{m-1} \right)   \partial_{x_k} \Delta \bar{\mathfrak{u}} \partial_{x_k}  \bar{\mathfrak{u}} \\
 &+\left( \sigma + m(u^{\sigma}_{\eta}+\eta)^{m-1} \right)     \left( \frac{f'''(\bar{\mathfrak{u}})f'(\bar{\mathfrak{u}}) - (f''(\bar{\mathfrak{u}}))^2 }{(f'(\bar{\mathfrak{u}}))^2 }  \right)|\nabla \bar{\mathfrak{u}}|^2 \bar{\mathfrak{U}}_k \notag \\
 &+ 2\left( \sigma + m(u^{\sigma}_{\eta}+\eta)^{m-1} \right)   \frac{f''(\bar{\mathfrak{u}})}{f'(\bar{\mathfrak{u}})}  \nabla \bar{\mathfrak{u}} \cdot \nabla \partial_{x_k} \bar{\mathfrak{u}} \partial_{x_k}  \bar{\mathfrak{u}}  \notag \\
 &+ m(m-1)(m-2)(u^{\sigma}_{\eta}+\eta)^{m-3} (f'(\bar{\mathfrak{u}}))^2   |\nabla \bar{\mathfrak{u}}|^2 \bar{\mathfrak{U}}_k  \notag \\
&+ 2m(m-1)(u^{\sigma}_{\eta}+\eta)^{m-2}  \nabla \bar{\mathfrak{u}}\  \cdot \nabla \partial_{x_k} u^{\sigma}_{\eta} \ \partial_{x_k} \bar{\mathfrak{u}} \notag \\
&- \frac{\partial_{x_k}  \bar{\mathfrak{u}}}{f'(\bar{\mathfrak{u}})} \left( \nabla \partial_{x_k}   u^{\sigma}_{\eta} \cdot \nabla \Phi * u^{\sigma}_{\eta} + \nabla u^{\sigma}_{\eta} \cdot \nabla \partial_{x_k}  \Phi * u^{\sigma}_{\eta}  +  \partial_{x_k} u^{\sigma}_{\eta} \Delta  \Phi * u^{\sigma}_{\eta} + u^{\sigma}_{\eta} \partial_{x_k}   \Delta  \Phi * u^{\sigma}_{\eta}  \right) \notag \\
&+  \frac{f''(\bar{\mathfrak{u}})}{(f'(\bar{\mathfrak{u}}))^2}  \left( \nabla u^{\sigma}_{\eta} \cdot \nabla \Phi * u^{\sigma}_{\eta} + u^{\sigma}_{\eta} \Delta   \Phi * u^{\sigma}_{\eta} \right)   \bar{\mathfrak{U}}_k.   \notag
\end{align}
One can see that
\begin{align}
\label{equation_big_u_frac_easier}
\frac{1}{2}  \partial_t  \bar{\mathfrak{U}}_k =&  m(m-1)(u^{\sigma}_{\eta}+\eta)^{m-2}  f'(\bar{\mathfrak{u}})   \Delta \bar{\mathfrak{u}} \ \bar{\mathfrak{U}}_k  + \left( \sigma + m(u^{\sigma}_{\eta}+\eta)^{m-1} \right)  \left( \frac{1}{2}\Delta \bar{\mathfrak{U}}_k - |\partial_{x_k} \nabla \bar{\mathfrak{u}}|^2 \right) \\
 &+\left( \sigma + m(u^{\sigma}_{\eta}+\eta)^{m-1} \right)     \left( \frac{f'''(\bar{\mathfrak{u}})f'(\bar{\mathfrak{u}}) - (f''(\bar{\mathfrak{u}}))^2 }{(f'(\bar{\mathfrak{u}}))^2 }  \right) |\nabla \bar{\mathfrak{u}}|^2 \bar{\mathfrak{U}}_k \notag \\
 &+ 2m(m-1)(u^{\sigma}_{\eta}+\eta)^{m-2}   f''(\bar{\mathfrak{u}}) |\nabla \bar{\mathfrak{u}}|^2 \bar{\mathfrak{U}}_k \notag \\
  &+ m(m-1)(m-2)(u^{\sigma}_{\eta}+\eta)^{m-3} (f'(\bar{\mathfrak{u}}))^2   |\nabla \bar{\mathfrak{u}}|^2 \bar{\mathfrak{U}}_k \notag \\
 &+ \left( \sigma + m(u^{\sigma}_{\eta}+\eta)^{m-1} \right)   \frac{f''(\bar{\mathfrak{u}})}{f'(\bar{\mathfrak{u}})}  \nabla \bar{\mathfrak{u}} \cdot \nabla \bar{\mathfrak{U}}_k \notag \\
& + m(m-1)(u^{\sigma}_{\eta}+\eta)^{m-2} f'(\bar{\mathfrak{u}})  \nabla \bar{\mathfrak{u}} \cdot  \nabla \bar{\mathfrak{U}}_k  \notag \\
&- \frac{1}{2} \nabla \bar{\mathfrak{U}}_k \cdot \nabla \Phi * u^{\sigma}_{\eta}  - \nabla \bar{\mathfrak{u}} \cdot \nabla \partial_{x_k}  \Phi * u^{\sigma}_{\eta} \partial_{x_k}  \bar{\mathfrak{u}}  - \frac{u^{\sigma}_{\eta} \partial_{x_k} \Delta \Phi * u^{\sigma}_{\eta} \partial_{x_k}  \bar{\mathfrak{u}}   }{ f'(\bar{\mathfrak{u}})} \notag \\
& + \left( \frac{f''(\bar{\mathfrak{u}})}{(f'(\bar{\mathfrak{u}}))^2}  u^{\sigma}_{\eta} -1  \right) \Delta \Phi * u^{\sigma}_{\eta} \bar{\mathfrak{U}}_k.  \notag
\end{align}

We denote $\sum_{k=1}^d \bar{\mathfrak{U}}_k = | \nabla \bar{\mathfrak{u}}|^2$ by $\mathfrak{D}$. Since $\sum_{k=1}^d \bar{\mathfrak{U}}_k^2 \leq \mathfrak{D}^2$,  we obtain that
\begin{align}
\label{equation_big_d_frac}
\frac{1}{2} \partial_t \mathfrak{D} \leq&  \left( m(m-1)(u^{\sigma}_{\eta}+\eta)^{m-2}  f'(\bar{\mathfrak{u}})  \right) \Delta \bar{\mathfrak{u}} \ \mathfrak{D}  + \left( \sigma + m(u^{\sigma}_{\eta}+\eta)^{m-1} \right)  \left( \frac{1}{2}\Delta \mathfrak{D} - \sum_{k=1}^d|\partial_{x_k} \nabla \bar{\mathfrak{u}}|^2 \right) \\
 &+\left( \sigma + m(u^{\sigma}_{\eta}+\eta)^{m-1} \right)     \left( \frac{f'''(\bar{\mathfrak{u}})f'(\bar{\mathfrak{u}}) - (f''(\bar{\mathfrak{u}}))^2 }{(f'(\bar{\mathfrak{u}}))^2 }  \right) \mathfrak{D}^2 \notag \\
 &+ m(m-1)(m-2)(u^{\sigma}_{\eta}+\eta)^{m-3} (f'(\bar{\mathfrak{u}}))^2   \mathfrak{D}^2  + 2m(m-1)(u^{\sigma}_{\eta}+\eta)^{m-2}   f''(\bar{\mathfrak{u}}) \mathfrak{D}^2  \notag \\
&+ \left( \sigma + m(u^{\sigma}_{\eta}+\eta)^{m-1} \right)   \frac{f''(\bar{\mathfrak{u}})}{f'(\bar{\mathfrak{u}})}  \nabla \bar{\mathfrak{u}} \cdot \nabla \mathfrak{D} + m(m-1)(u^{\sigma}_{\eta}+\eta)^{m-2} f'(\bar{\mathfrak{u}})  \nabla \bar{\mathfrak{u}} \cdot  \nabla \mathfrak{D}  \notag \\
&- \frac{1}{2} \nabla \mathfrak{D} \cdot \nabla \Phi * u^{\sigma}_{\eta}  - \sum_{k=1}^d \nabla \bar{\mathfrak{u}} \cdot \nabla \partial_{x_k}  \Phi * u^{\sigma}_{\eta} \partial_{x_k}  \bar{\mathfrak{u}}  - \sum_{k=1}^d  \frac{u^{\sigma}_{\eta} \partial_{x_k} \Delta \Phi * u^{\sigma}_{\eta} \partial_{x_k}  \bar{\mathfrak{u}}   }{ f'(\bar{\mathfrak{u}})} \notag \\
& + \left( \frac{f''(\bar{\mathfrak{u}})}{(f'(\bar{\mathfrak{u}}))^2}  u^{\sigma}_{\eta} -1  \right) \Delta \Phi * u^{\sigma}_{\eta} \mathfrak{D}.  \notag
\end{align}

\subsubsection{Decomposition of $\R^d$.}
Let $t\in [0,T']$ be fixed. Define $\tilde{\omega}>0$ such that
\begin{align}
\label{def_var_sigma}
\tilde{\omega} := \begin{cases}  \min \left\{ \frac{\sqrt{\sigma}}{16  e \sqrt{m(m-1)(m-2)} (\| u^{\sigma}_{\eta} \|_{L^{\infty}(0,T' ; L^{\infty}(\R^d) ) }+M)^{\frac{m-3}{2}}}, \frac{\sigma}{16em(m-1)(\| u^{\sigma}_{\eta}  \|_{L^{\infty}(0,T' ; L^{\infty}(\R^d) )}+M)^{m-2} } \right\} , \ &\text{for} \ m\geq 3,  \\
\frac{\sigma}{16 e m (m-1)}, \ &\text{for} \ m= 2,
\end{cases}
\end{align}
and
\begin{align*}
\tilde{\Omega}_k(t):= \{x\in \R^d \ | \ (k-1) \tilde{\omega} \leq u^{\sigma}_{\eta}(t,x)<k \tilde{\omega} \} \ \text{for} \ k\in \N.
\end{align*}
Since $\| u^{\sigma}_{\eta}\|_{L^{\infty}(0,T' ; L^{\infty}(\R^d) ) } < \infty$,  there exists $k_0\in \N$ such that
\begin{align*}
\tilde{\Omega}_k(t)\cap\tilde{\Omega}_j(t)= \emptyset \ \forall j,k \in \{1, \ldots , k_0\} \ \text{and} \ \R^d = \bigcup_{k=1}^{k_0}\tilde{\Omega}_k(t).
\end{align*}
For any fixed $t\in[0,T']$,  we define operator $f_k(\bar{\mathfrak{u}})$ as
\begin{align*}
u^{\sigma}_{\eta}(t,x) = f_k(\bar{\mathfrak{u}}(t,x)) = (k-3)\tilde{\omega} +4e\tilde{\omega}\int_0^{\bar{\mathfrak{u}}(t,x)} e^{-s^2}ds \ \text{in}   \bigcup_{i=k-1}^{k+1}\tilde{\Omega}_i(t).
\end{align*}
Moreover, let $\phi_k(x)\in C^2(\R^d)$ satisfy $0\leq \phi_k(x) \leq 1$ and
\begin{align*}
\phi_k(x):= \begin{cases} 1, \ \text{for} \ x \in \tilde{\Omega}_k(t), \\
0, \ \text{for} \ x \in \R^d \setminus \bigcup_{j=k-1}^{k+1} \tilde{\Omega}_j(t).
\end{cases}
\end{align*}
Then there exists a constant $C_{\phi_k}$, which depends on $\text{dist}(\tilde{\Omega}_k, \partial \tilde{\Omega}_{k-1})$ and
$\text{dist}(\tilde{\Omega}_k, \partial \tilde{\Omega}_{k+1})$, such that
\begin{align*}
|\nabla \phi_k (x)| \leq C_{\phi_k} (\phi_k)^{\frac{3}{4}} \quad {\rm{and}}\quad |\Delta\phi_k(x) | \leq C_{\phi_k}.
\end{align*}
For details about the existence of this cut-off function we refer to \citep{sugiyama2007time} and \citep{ishida2013gradient} .

\subsubsection{Coefficient of $\mathfrak{D}^2$ in \eqref{equation_big_d_frac}.}
Now we are going to prove that the coefficient of $\mathfrak{D}^2$ in \eqref{equation_big_d_frac} is negative.
Following the same way in \citep{sugiyama2007time}, we obtain that
\begin{align*}
\frac{1}{4e} \leq \bar{\mathfrak{u}}(t,x) = f_k^{-1}(u^{\sigma}_{\eta}) \leq 1 \ \text{in} \bigcup_{i=k-1}^{k+1}\tilde{\Omega}_i(t).
\end{align*}
From the definition of $f_k(\bar{\mathfrak{u}})$, we derive that
\begin{align*}
\frac{f_k'''(\bar{\mathfrak{u}})f_k'(\bar{\mathfrak{u}}) - (f_k''(\bar{\mathfrak{u}}))^2 }{(f_k'(\bar{\mathfrak{u}}))^2 }   = \frac{8e\tilde{\omega} e^{- \bar{\mathfrak{u}}^2 } (2\bar{\mathfrak{u}}^2 -1 ) 4e\tilde{\omega} e^{- \bar{\mathfrak{u}}^2    } - (-8e\tilde{\omega}\bar{\mathfrak{u}} e^{- \bar{\mathfrak{u}}^2 })^2 }{(4e\tilde{\omega} e^{- \bar{\mathfrak{u}}^2 })^2} =-2.
\end{align*}
By \eqref{def_var_sigma}, we have
\begin{align*}
 m(m-1)(m-2)(u^{\sigma}_{\eta}+\eta)^{m-3}(f_k'(\bar{\mathfrak{u}}))^2 &\leq 16 e^2\tilde{\omega}^2 e^{-2\bar{\mathfrak{u}}^2}m(m-1)(m-2)(u^{\sigma}_{\eta}+\eta)^{m-3}\\
 &\leq 16 e^2\tilde{\omega}^2m(m-1)(m-2)(2\| u^{\sigma}_{\eta} \|_{L^{\infty}(0,T'; L^{\infty} (\R^d ) )} )^{m-3}
 \leq \frac{1}{8}\sigma.
\end{align*}
With this observation at hand, we deduce that
\begin{align}\label{coe}
 &\left( \sigma + m(u^{\sigma}_{\eta}+\eta)^{m-1} \right)     \left( \frac{f_k'''(\bar{\mathfrak{u}})f_k'(\bar{\mathfrak{u}}) - (f_k''(\bar{\mathfrak{u}}))^2 }{(f_k'(\bar{\mathfrak{u}}))^2 }  \right) \mathfrak{D}^2  \\
 &+ m(m-1)(m-2)(u^{\sigma}_{\eta}+\eta)^{m-3} (f_k'(\bar{\mathfrak{u}}))^2   \mathfrak{D}^2  + 2m(m-1)(u^{\sigma}_{\eta}+\eta)^{m-2}   f_k''(\bar{\mathfrak{u}}) \mathfrak{D}^2\notag \\
 \leq& -2\left( \sigma + m(u^{\sigma}_{\eta}+\eta)^{m-1} \right)      \mathfrak{D}^2 \notag + m(m-1)(m-2)(u^{\sigma}_{\eta}+\eta)^{m-3} (f_k'(\bar{\mathfrak{u}}))^2   \mathfrak{D}^2 \notag  \\
 \leq& -2 \sigma \mathfrak{D}^2 -2m (u^{\sigma}_{\eta}+\eta)^{m-1} \mathfrak{D}^2 + \frac{1}{8} \sigma \mathfrak{D}^2
 <0 \ \text{in} \ \bigcup_{i=k-1}^{k+1}\tilde{\Omega}_i(t) .\notag
\end{align}

\subsubsection{Bochner type inequality for $V_{\phi}:= |\nabla \bar{\mathfrak{u}}(t,x)|^2 \phi $.}
In order to simplify the notation in this step, we use $\phi$ and $f$ instead of $\phi_k$ and $f_k$ respectively.
Multiplying \eqref{equation_big_d_frac} by $\phi$ and using \eqref{coe} together with
\begin{align*}
&f'(\bar{\mathfrak{u}}) \leq 4 e \tilde{\omega}, \ \ \ \ \ f'' (\bar{\mathfrak{u}}) < 0, \ \ \ \ \ f''' (\bar{\mathfrak{u}})= 8e \tilde{\omega} e^{-\bar{\mathfrak{u}}^2}(2\bar{\mathfrak{u}}^2-1), \\
&\frac{f'''(\bar{\mathfrak{u}})f'(\bar{\mathfrak{u}}) - (f''(\bar{\mathfrak{u}}))^2 }{(f'(\bar{\mathfrak{u}}))^2 }  =-2, \ \ \ \ \ -2\leq \frac{f''(\bar{\mathfrak{u}})}{f'(\bar{\mathfrak{u}})}=-2\bar{\mathfrak{u}}\leq - \frac{1}{2e},
\end{align*}
we obtain that
\begin{align}
\label{equation_big_V_phi}
\frac{1}{2} \partial_t  V_{\phi} \leq&  m(m-1)(u^{\sigma}_{\eta}+\eta)^{m-2}  4 e \tilde{\omega}  \sum_{i=1}^d |\partial_{ii} \bar{\mathfrak{u}}| \ V_{\phi}
+ \left( \sigma + m(u^{\sigma}_{\eta}+\eta)^{m-1} \right)  \left( \frac{1}{2}\Delta \mathfrak{D} \phi  - \sum_{k=1}^d|\partial_{x_k} \nabla \bar{\mathfrak{u}}|^2 \phi \right) \\
 &-2\left( \sigma + m(u^{\sigma}_{\eta}+\eta)^{m-1} \right)      \mathfrak{D}^2 \phi  + \sigma  \frac{f''(\bar{\mathfrak{u}})}{f'(\bar{\mathfrak{u}})}  \nabla \bar{\mathfrak{u}} \cdot \nabla \mathfrak{D}  \ \phi  + \frac{1}{8} \sigma \mathfrak{D}^2 \phi  \notag \\
 &+ m \left(  (u^{\sigma}_{\eta}+\eta)  \frac{f''(\bar{\mathfrak{u}})}{f'(\bar{\mathfrak{u}})} + (m-1)f'(\bar{\mathfrak{u}}) \right) (u^{\sigma}_{\eta}+\eta)^{m-2}  \nabla \bar{\mathfrak{u}} \cdot \nabla \mathfrak{D} \ \phi     \notag \\
&- \frac{1}{2} \nabla \mathfrak{D} \cdot \nabla \Phi * u^{\sigma}_{\eta} \phi    - \sum_{k=1}^d \nabla \bar{\mathfrak{u}} \cdot \nabla \partial_{x_k}  \Phi * u^{\sigma}_{\eta} \partial_{x_k}  \bar{\mathfrak{u}} \phi    - \sum_{k=1}^d  \frac{u^{\sigma}_{\eta} \partial_{x_k} \Delta \Phi * u^{\sigma}_{\eta} \partial_{x_k}  \bar{\mathfrak{u}}   }{ f'(\bar{\mathfrak{u}})} \phi  \notag \\
& + \left( \frac{f''(\bar{\mathfrak{u}})}{(f'(\bar{\mathfrak{u}}))^2}  u^{\sigma}_{\eta} -1  \right) \Delta \Phi * u^{\sigma}_{\eta} \mathfrak{D} \phi.  \notag
\end{align}
\raggedbottom
Multiplying \eqref{equation_big_V_phi} by $V_{\phi}^{r-1}$ and integrating it over $\tilde{\Omega}:= \bigcup_{j=k-1}^{k+1}\tilde{\Omega}_j(t)$,
we deduce that
\begin{align}
\label{inequality_big_V_phi_L_r_norm_simpl}
\frac{1}{r} \frac{d}{dt} \| V_{\phi} \|_{L^r(\tilde{\Omega})}^r \leq& -m(r-1) \int_{\tilde{\Omega}} (u^{\sigma}_{\eta}+\eta)^{m-1} | \nabla  V_{\phi} |^2 (V_{\phi})^{r-2}  dx
-\sigma (r-1) \int_{\tilde{\Omega}}  | \nabla  V_{\phi} |^2 (V_{\phi})^{r-2}  dx   \\
 &-  2m \int_{\tilde{\Omega}} (u^{\sigma}_{\eta}+\eta)^{m-1}   \sum_{k=1}^d|\partial_{x_k} \nabla \bar{\mathfrak{u}}|^2 \phi V_{\phi}^{r-1} dx  -  2 \sigma \int_{\tilde{\Omega}}   \sum_{k=1}^d|\partial_{x_k} \nabla \bar{\mathfrak{u}}|^2 \phi V_{\phi}^{r-1} dx  \notag \\
   &-4 m \int_{\tilde{\Omega}}  (u^{\sigma}_{\eta}+\eta)^{m-1}      \mathfrak{D} V_{\phi}^r dx -4  \sigma \int_{\tilde{\Omega}}   \mathfrak{D} V_{\phi}^r dx  +  \frac{1}{4} \sigma \int_{\tilde{\Omega}}   \mathfrak{D} V_{\phi}^r dx  \notag \\
   &+m(m-1) \int_{\tilde{\Omega}} (u^{\sigma}_{\eta}+\eta)^{m-2} \nabla u^{\sigma}_{\eta} \cdot \nabla V_{\phi} (V_{\phi})^{r-1}  dx  \notag \\
   &+ 2m(r-1) \int_{\tilde{\Omega}}  (u^{\sigma}_{\eta}+\eta)^{m-1}  \mathfrak{D}  (V_{\phi})^{r-2}  \nabla \phi   \cdot \nabla V_{\phi}   dx \notag \\
   &+ 2\sigma(r-1) \int_{\tilde{\Omega}}   \mathfrak{D}  (V_{\phi})^{r-2}  \nabla \phi   \cdot \nabla V_{\phi}   dx
+ m \int_{\tilde{\Omega}}  (u^{\sigma}_{\eta}+\eta)^{m-1}  \mathfrak{D}    \Delta \phi   (V_{\phi})^{r-1}  dx \notag \\
&+ \sigma  \int_{\tilde{\Omega}}  \mathfrak{D}    \Delta \phi   (V_{\phi})^{r-1}  dx
+ \left( 8 e \tilde{\omega} m(m-1) \right)  \int_{\tilde{\Omega}}(u^{\sigma}_{\eta}+\eta)^{m-2}    \sum_{i=1}^d |\partial_{ii} \bar{\mathfrak{u}}|  \ V_{\phi}^r dx  \notag \\
&+ 2m \int_{\tilde{\Omega}} (u^{\sigma}_{\eta}+\eta)^{m-1}   \left| \frac{f''(\bar{\mathfrak{u}})}{f'(\bar{\mathfrak{u}})} \right|    |\nabla \bar{\mathfrak{u}} | \  |\nabla V_{\phi}| \  |V_{\phi}|^{r-1}  dx \notag \\
&+ 2\sigma \int_{\tilde{\Omega}}   \left| \frac{f''(\bar{\mathfrak{u}})}{f'(\bar{\mathfrak{u}})} \right|    |\nabla \bar{\mathfrak{u}} | \  |\nabla V_{\phi}| |V_{\phi}|^{r-1}  dx \notag \\
& +2m \int_{\tilde{\Omega}} (u^{\sigma}_{\eta}+\eta)^{m-1}   \left| \frac{f''(\bar{\mathfrak{u}})}{f'(\bar{\mathfrak{u}})} \right|    |\nabla \bar{\mathfrak{u}} | \  |\nabla \phi| \mathfrak{D} (V_{\phi})^{r-1}  dx \notag \\
& +2\sigma \int_{\tilde{\Omega}}  \left| \frac{f''(\bar{\mathfrak{u}})}{f'(\bar{\mathfrak{u}})} \right|    |\nabla \bar{\mathfrak{u}} | \  |\nabla \phi| \mathfrak{D} (V_{\phi})^{r-1}  dx \notag \\
&- \int_{\tilde{\Omega}} \nabla V_{\phi} \cdot \nabla \Phi * u^{\sigma}_{\eta} (V_{\phi})^{r-1} dx +  \int_{\tilde{\Omega}} \mathfrak{D} \nabla \phi \cdot  \nabla  \Phi * u^{\sigma}_{\eta}   (V_{\phi})^{r-1} dx \notag   \\
&    - 2   \sum_{k=1}^d  \int_{\tilde{\Omega}} \nabla \bar{\mathfrak{u}} \cdot \nabla \partial_{x_k}  \Phi * u^{\sigma}_{\eta} \partial_{x_k}  \bar{\mathfrak{u}} \phi  (V_{\phi})^{r-1}   dx \notag \\
& + 2 \int_{\tilde{\Omega}} \left( \frac{f''(\bar{\mathfrak{u}})}{(f'(\bar{\mathfrak{u}}))^2}  u^{\sigma}_{\eta} -1  \right) \Delta \Phi * u^{\sigma}_{\eta} (V_{\phi})^r  dx  \notag  \\
&  -2 \sum_{k=1}^d \int_{\tilde{\Omega}}    \frac{u^{\sigma}_{\eta} \partial_{x_k} \Delta \Phi * u^{\sigma}_{\eta} \partial_{x_k}  \bar{\mathfrak{u}}   }{ f'(\bar{\mathfrak{u}})} \phi  (V_{\phi})^{r-1} dx  \notag \\
=:& -I_1 -I_1'-I_2 -I_2'-I_3 -I_3'+ \frac{1}{16} I_3'  +II_1+II_2+II_3+II_4+II_5 +III \notag\\\
&+ IV_1+IV_1'+ IV_2+IV_2'+J_1^1+J_1^2+J_2+J_3+J_4.\notag
\end{align}

Now we are going to estimate 15 terms.
By Young's inequality and definition of $\tilde w$ we obtain that
\begin{align}
\label{est_II_1}
II_1
&\leq \frac{\sigma (r-1)}{8}  \int_{\tilde{\Omega}} |  V_{\phi} |^{r-2} | \nabla V_{\phi} |^2 dx + \frac{2m^2(m-1)^2}{\sigma(r-1)} \left(  \| u^{\sigma}_{\eta}\|_{L^{\infty}(0,T'; L^{\infty} (\R^d ) )}+M \right)^{2(m-2)} \int_{\tilde{\Omega}} (f'(\bar{\mathfrak{u}}))^2 | \nabla  \bar{\mathfrak{u}} |^2 |V_{\phi} |^r dx \\
&\leq \frac{\sigma (r-1)}{8}  \int_{\tilde{\Omega}} |  V_{\phi} |^{r-2} | \nabla V_{\phi} |^2 dx + \frac{2m^2(m-1)^2}{\sigma(r-1)} \left(  \| u^{\sigma}_{\eta}\|_{L^{\infty}(0,T'; L^{\infty} (\R^d ) )}+M \right)^{2(m-2)} \int_{\tilde{\Omega}} 16e^2\tilde{\omega}^2 e^{-2 \bar{\mathfrak{u}}^2 } \mathfrak{D} |V_{\phi} |^r dx \notag\\
&\leq  \frac{1}{8} I_1' + \frac{1}{8} I_3'.\notag
\end{align}
Since $|\nabla \phi|^4 \leq C_{\phi} ^4 \phi^3$, we infer that
\begin{align}
\label{est_II_2}
II_2
&\leq \frac{1}{8}I_1 + 8m(r-1)  \int_{\tilde{\Omega}} (u^{\sigma}_{\eta}+\eta)^{m-1} \mathfrak{D}^2 (V_{\phi})^{r-2} |\nabla \phi|^2dx\\
&\leq \frac{1}{8}I_1 +  \frac{1}{2} m \int_{\tilde{\Omega}} (u^{\sigma}_{\eta}+\eta)^{m-1} \mathfrak{D} (V_{\phi})^r  dx + 32m (r-1)^2 \int_{\tilde{\Omega}} (u^{\sigma}_{\eta}+\eta)^{m-1} \mathfrak{D}^3  (V_{\phi})^{r-4} |\nabla \phi|^4  dx\notag\\
&\leq \frac{1}{8} I_1 + \frac{1}{8} I_3 + 32m(r-1)^2 C_{\phi}^4  ( \| u^{\sigma}_{\eta}\|_{L^{\infty}(0,T'; L^{\infty}(\R^d) ) }+M)^{m-1} |\tilde{\Omega}|^{\frac{1}{r}} \| V_{\phi} \|_{L^r(\tilde{\Omega})}^{r-1}.\notag
\end{align}
Similarly,
\begin{align}
\label{est_II_3}
II_3
&\leq  \frac{1}{8} I_1' +  \frac{1}{8} I_3' +  32 \sigma (r-1)^2 C_{\phi}^4 |\tilde{\Omega}|^{\frac{1}{r}} \| V_{\phi} \|_{L^r(\tilde{\Omega})}^{r-1}.
\end{align}
The term $II_4$ can be divided into three parts
\begin{align*}
II_4
&= -m(m-1) \int_{\tilde{\Omega}} (u^{\sigma}_{\eta}+\eta)^{m-2} f'(\bar{\mathfrak{u}}) \nabla \bar{\mathfrak{u}} \cdot \mathfrak{D} \nabla \phi  ( V_{\phi})^{r-1}  dx   -m  \int_{\tilde{\Omega}} (u^{\sigma}_{\eta}+\eta)^{m-1} \nabla \mathfrak{D} \cdot \nabla \phi  ( V_{\phi})^{r-1}  dx \\
&\ \ \  - m(r-1)  \int_{\tilde{\Omega}} (u^{\sigma}_{\eta}+\eta)^{m-1} \mathfrak{D} \nabla \phi \cdot  ( V_{\phi})^{r-2} \nabla V_{\phi} dx\\
&=:II_{41}+II_{42}+II_{43}.
\end{align*}
Let us estimate these three terms of $II_4$.
Since $f'(\bar{\mathfrak{u}}) \leq 4e \tilde{\omega} e^{-\bar{\mathfrak{u}}^2}$, $|\nabla \bar{\mathfrak{u}}| \leq \mathfrak{D}^{\frac{1}{2}}$ and $|\nabla \phi| \leq C_{\phi} \phi^{\frac{3}{4}}$, we obtain that
\begin{align*}
 II_{41}
 \leq&  m(m-1) \int_{\tilde{\Omega}} 4 e \tilde{\omega} (u^{\sigma}_{\eta}+\eta)^{m-2} \mathfrak{D}^{\frac{3}{2}} C_{\phi} \phi^{\frac{3}{4}} (V_{\phi})^{r-1} dx\\
 \leq& \frac{1}{16} I_3' +  64^2 e^4 \tilde{\omega}^4 m^4(m-1)^4 \frac{1}{\sigma^3} ( \| u^{\sigma}_{\eta}\|_{L^{\infty}(0,T'; L^{\infty}(\R^d) )}+M)^{4(m-2)} C_{\phi}^4 \int_{\tilde{\Omega}} (V_{\phi})^{r-1} dx \\
 \leq& \frac{1}{16} I_3' +  64^2 e^4  m^4(m-1)^4 \frac{1}{\sigma^3} ( \| u^{\sigma}_{\eta}\|_{L^{\infty}(0,T'; L^{\infty}(\R^d) )}+M)^{4(m-2)} C_{\phi}^4 \| V_{\phi} \|_{L^r(\tilde{\Omega})}^{r-1} |\tilde{\Omega}|^{\frac{1}{r}}.
\end{align*}
For the second term,
\begin{align*}
II_{42}
\leq& 2m C_{\phi} \int_{\tilde{\Omega}} (u^{\sigma}_{\eta}+\eta)^{m-1} |\nabla \bar{\mathfrak{u}}| \ |D^2 \bar{\mathfrak{u}} | \phi^{\frac{3}{4}} |V_{\phi}|^{r-1}  dx \\
\leq& \frac{1}{8} \cdot 2m \int_{\tilde{\Omega}} (u^{\sigma}_{\eta}+\eta)^{m-1} \sum_{k=1}^d |\nabla \partial_{x_k} \bar{\mathfrak{u}}| \phi |V_{\phi}|^{r-1} dx +4m C_{\phi}^2 \int_{\tilde{\Omega}} (u^{\sigma}_{\eta}+\eta)^{m-1} \mathfrak{D} \phi^{\frac{1}{2}} |V_{\phi}|^{r-1} dx\\
\leq& \frac{1}{8} I_2 + \frac{1}{8} I_3 + 8m C_{\phi}^4 ( \| u^{\sigma}_{\eta}\|_{L^{\infty}(0,T'; L^{\infty} (\R^d) ) }+M)^{m-1} \int_{\tilde{\Omega}} |V_{\phi}|^{r-1}dx\\
\leq& \frac{1}{8} I_2 + \frac{1}{8} I_3 + 8m C_{\phi}^4 ( \| u^{\sigma}_{\eta}\|_{L^{\infty}(0,T'; L^{\infty} (\R^d) ) }+M)^{m-1}  \| V_{\phi} \|_{L^r(\tilde{\Omega})}^{r-1} |\tilde{\Omega}|^{\frac{1}{r}}.
\end{align*}
For the third term we deduce that
\begin{align*}
II_{43}
\leq& \frac{1}{8}  m(r-1)  \int_{\tilde{\Omega}} (u^{\sigma}_{\eta}+\eta)^{m-1} |V_{\phi}|^{r-2} |\nabla V_{\phi} |^2  dx \\
&+ 2m(r-1) C_{\phi}^2 \int_{\tilde{\Omega}} (u^{\sigma}_{\eta}+\eta)^{m-1} \mathfrak{D}^2 \phi^{\frac{3}{2}} (V_{\phi})^{r-2} dx \\
\leq& \frac{1}{8} I_1 + \frac{1}{8} I_3 + 2m(r-1)^2 C_{\phi}^4 ( \| u^{\sigma}_{\eta}\|_{L^{\infty}(0,T'; L^{\infty} (\R^d) ) }+M)^{m-1}  \| V_{\phi} \|_{L^r(\tilde{\Omega})}^{r-1} |\tilde{\Omega}|^{\frac{1}{r}}.
\end{align*}
Therefore,
\begin{align}
\label{est_II_4}
II_4 \leq& \frac{1}{8} I_1 + \frac{1}{8} I_2 + \frac{1}{4} I_3 + \frac{1}{16} I_3'\\
&+ 64^2 e^4  m^4(m-1)^4 \frac{1}{\sigma^3} ( \| u^{\sigma}_{\eta}\|_{L^{\infty}(0,T'; L^{\infty} (\R^d) ) }+M)^{4(m-2)} C_{\phi}^4  \| V_{\phi} \|_{L^r(\tilde{\Omega})}^{r-1} |\tilde{\Omega}|^{\frac{1}{r}} \notag  \\
&  +   8m C_{\phi}^4 ( \| u^{\sigma}_{\eta}\|_{L^{\infty}(0,T'; L^{\infty} (\R^d) ) }+M)^{m-1} \| V_{\phi} \|_{L^r(\tilde{\Omega})}^{r-1} |\tilde{\Omega}|^{\frac{1}{r}}  \notag  \\
&+ 2m(r-1)^2 C_{\phi}^4 ( \| u^{\sigma}_{\eta}\|_{L^{\infty}(0,T'; L^{\infty} (\R^d) ) }+M)^{m-1} \| V_{\phi} \|_{L^r(\tilde{\Omega})}^{r-1} |\tilde{\Omega}|^{\frac{1}{r}}. \notag
\end{align}
Similarly,
\begin{align}
\label{est_II_5}
II_5 &\leq  \frac{1}{8} I_1' + \frac{1}{8} I_2' +\frac{1}{16} I_3' + \left(32 \sigma C_{\phi}^4 + 32 \sigma (r-1)^2 C_{\phi}^4 \right) \| V_{\phi} \|_{L^r(\tilde{\Omega})}^{r-1} |\tilde{\Omega}|^{\frac{1}{r}}.
\end{align}
For $III$,
\begin{align}
\label{est_III}
III
&\leq \frac{1}{8} 2 \sigma \int_{\tilde{\Omega}} \sum_{i=1}^d |\partial_{ii} \bar{\mathfrak{u}}|^2 \phi |V_{\phi}|^{r-1}  dx  + \frac{64}{\sigma} \tilde{\omega}^2 e^2m^2(m-1)^2 ( \| u^{\sigma}_{\eta}\|_{L^{\infty}(0,T'; L^{\infty} (\R^d ) )}+M)^{2(m-2)} \int_{\tilde{\Omega}} \mathfrak{D} (V_{\phi})^r dx\\
&\leq \frac{1}{8} I_2' + \frac{1}{8} I_3'.\notag
\end{align}
We focus on $IV_1$.
\begin{align*}
IV_1 &\leq 4m \int_{\tilde{\Omega}} (u^{\sigma}_{\eta}+\eta)^{m-1}    |\nabla \bar{\mathfrak{u}} | \  |\nabla V_{\phi}| \  |V_{\phi}|^{r-1}  dx\\
&\leq \frac{m(r-1)}{8} \int_{\tilde{\Omega}} (u^{\sigma}_{\eta}+\eta)^{m-1}  |V_{\phi}|^{r-2} |\nabla V_{\phi}|^2 dx + \frac{32m}{r-1} \int_{\tilde{\Omega}} (u^{\sigma}_{\eta}+\eta)^{m-1} \mathfrak{D} |V_{\phi}|^r  dx.
\end{align*}
Choosing $r\geq 65$, we obtain that $\frac{32m}{r-1} \leq \frac{1}{2} m = \frac{1}{8} \cdot 4m$. Therefore,
\begin{align}
\label{est_IV_1}
IV_1 \leq \frac{1}{8} I_1 + \frac{1}{8} I_3.
\end{align}
Similarly,
\begin{align}
\label{est_IV_1_prime}
IV_1'&\leq  4\sigma \int_{\tilde{\Omega}}   |\nabla \bar{\mathfrak{u}} | \  |\nabla V_{\phi}| |V_{\phi}|^{r-1}  dx\\
&\leq \frac{\sigma(r-1)}{8} \int_{\tilde{\Omega}}  |V_{\phi}|^{r-2} |\nabla V_{\phi}|^2 dx
+ \frac{32 \sigma}{r-1} \int_{\tilde{\Omega}} \mathfrak{D} |V_{\phi}|^r  dx\notag\\
&\leq \frac{1}{8} I_1' + \frac{1}{8} I_3' .\notag
\end{align}
From $\left| \frac{f''(\bar{\mathfrak{u}})}{f'(\bar{\mathfrak{u}})} \right| \leq 2$, we deduce  that
\begin{align}
\label{est_IV_2}
IV_2 &\leq 4m C_{\phi}  \int_{\tilde{\Omega}} (u^{\sigma}_{\eta}+\eta)^{m-1} \mathfrak{D}^{\frac{1}{2}} \mathfrak{D} \phi^{\frac{3}{4}} |V_{\phi}|^{r-1} dx\\
&\leq \frac{1}{32} I_3 + 32 m C_{\phi}^2 \int_{\tilde{\Omega}}  (u^{\sigma}_{\eta}+\eta)^{m-1} \mathfrak{D}^2 \phi^{\frac{3}{2}} |V_{\phi}|^{r-2} dx\notag\\
&\leq   \frac{1}{16} I_3 +   64^2 m  C_{\phi}^4  \int_{\tilde{\Omega}}   (u^{\sigma}_{\eta}+\eta)^{m-1} \mathfrak{D}^3 \phi^3  |V_{\phi}|^{r-4} dx\notag\\
&\leq \frac{1}{16} I_3 +   64^2 m  C_{\phi}^4 ( \| u^{\sigma}_{\eta}\|_{L^{\infty}(0,T'; L^{\infty} (\R^d ))}+M)^{m-1} \| V_{\phi} \|_{L^r(\tilde{\Omega})}^{r-1} |\tilde{\Omega}|^{\frac{1}{r}}.\notag
\end{align}
Similarly,
\begin{align}
\label{est_IV_2_prime}
IV_2'  \leq \frac{1}{16} I_3' +   64^2 \sigma  C_{\phi}^4  \| V_{\phi} \|_{L^r(\tilde{\Omega})}^{r-1} |\tilde{\Omega}|^{\frac{1}{r}} .
\end{align}
Recall that for $q>d$
\begin{align*}
\| \nabla \Phi * u^{\sigma}_{\eta}\|_{L^{\infty}(0,T'; L^{\infty} (\R^d) ) }
\leq C \| \nabla \Phi * u^{\sigma}_{\eta}\|_{L^{\infty}(0,T'; W^{1,q}(\R^d))}
\leq C  \|   u^{\sigma}_{\eta}\|_{L^{\infty}(0,T'; L^q (\R^d))} \leq C,
\end{align*}
where $C$ appeared in this subsection is a positive constant dependent on $ \sigma$, $C_{\phi}$, $m$ and
$\|  u_0\|_{ L^{1} ( \R^d)\cap L^{\infty} ( \R^d) } $.
So we obtain that
\begin{align}
\label{est_J_1_1}
J_1^1 &\leq \frac{1}{8} I_1' + \frac{2 \| \nabla \Phi * u^{\sigma}_{\eta}\|_{L^{\infty}(0,T'; L^{\infty} (\R^d ) )}^2 }{\sigma(r-1)}
\int_{\tilde{\Omega}} |V_{\phi}|^r dx\\
&\leq  \frac{1}{8} I_1'
+ \frac{ \| \nabla \Phi * u^{\sigma}_{\eta}\|_{L^{\infty}(0,T'; L^{\infty} (\R^d ) )}^2 }{ 32 \sigma } \| V_{\phi} \|_{L^r(\tilde{\Omega})}^r.\notag
\end{align}
By the same argument, it follows that
\begin{align}
\label{est_J_1_2}
J_1^2
&\leq \frac{1}{8} I_3' + \frac{1}{2 \sigma} C_{\phi}^2 \int_{\tilde{\Omega}} \mathfrak{D}  \phi^{\frac{3}{2}} |\nabla \Phi * u^{\sigma}_{\eta} |^2 \ | V_{\phi} |^{r-2}  dx \\
&\leq \frac{1}{8} I_3' + \frac{1}{2 \sigma} C_{\phi}^2  \| \nabla \Phi * u^{\sigma}_{\eta}\|_{L^{\infty}(0,T'; L^{\infty} (\R^d ) )}^2 \int_{\tilde{\Omega}}  | V_{\phi} |^{r-1} dx \notag\\
&\leq \frac{1}{8} I_3' + \frac{1}{2 \sigma} C_{\phi}^2  \| \nabla \Phi * u^{\sigma}_{\eta}\|_{L^{\infty}(0,T'; L^{\infty} (\R^d ) )}^2  \| V_{\phi} \|_{L^r(\tilde{\Omega})}^{r-1} |\tilde{\Omega}|^{\frac{1}{r}}. \notag
\end{align}
Next, we analyze $J_2$.
\begin{align*}
J_2 &= 2   \sum_{k=1}^d  \int_{\tilde{\Omega}} \nabla \partial_{x_k} \bar{\mathfrak{u}} \cdot \nabla \Phi * u^{\sigma}_{\eta} \partial_{x_k}  \bar{\mathfrak{u}} \phi  (V_{\phi})^{r-1}   dx  + 2   \sum_{k=1}^d  \int_{\tilde{\Omega}} \nabla  \bar{\mathfrak{u}} \cdot \nabla \Phi * u^{\sigma}_{\eta} \partial_{x_k}  \partial_{x_k} \bar{\mathfrak{u}} \phi  (V_{\phi})^{r-1}   dx\\
&\ \ \ + 2   \sum_{k=1}^d  \int_{\tilde{\Omega}} \nabla  \bar{\mathfrak{u}} \cdot \nabla \Phi * u^{\sigma}_{\eta}   \partial_{x_k} \bar{\mathfrak{u}}  \partial_{x_k} \phi  (V_{\phi})^{r-1}   dx  + 2(r-1) \sum_{k=1}^d  \int_{\tilde{\Omega}} \nabla  \bar{\mathfrak{u}} \cdot \nabla \Phi * u^{\sigma}_{\eta}   \partial_{x_k} \bar{\mathfrak{u}}   \phi  (V_{\phi})^{r-2} \partial_{x_k} V_{\phi}    dx\\
&=:J_{21}+J_{22}+J_{23}+J_{24}.
\end{align*}
Now we are going to estimate these four terms. For the first term,
\begin{align*}
 J_{21}
\leq& \frac{1}{8}\cdot 2 \sigma \int_{\tilde{\Omega}} \sum_{k=1}^d  |  \nabla \partial_{x_k} \bar{\mathfrak{u}} |^2 \phi |V_{\phi}|^{r-1} dx  + \frac{4}{\sigma} \int_{\tilde{\Omega}} | \nabla \Phi * u^{\sigma}_{\eta} |^2 \mathfrak{D} \phi |V_{\phi}|^{r-1} dx \\
\leq& \frac{1}{8} I_2' + \frac{4  \| \nabla \Phi * u^{\sigma}_{\eta}\|_{L^{\infty}(0,T'; L^{\infty} (\R^d ) )}^2 }{\sigma} \| V_{\phi} \|_{L^r(\tilde{\Omega})}^r.
\end{align*}
For the second term,
\begin{align*}
 J_{22}
\leq& \frac{1}{8}\cdot 2 \sigma \int_{\tilde{\Omega}} \sum_{k=1}^d  |  \nabla \partial_{x_k} \bar{\mathfrak{u}} |^2 \phi |V_{\phi}|^{r-1} dx  + \frac{4}{\sigma} \int_{\tilde{\Omega}} | \nabla \Phi * u^{\sigma}_{\eta} |^2 \mathfrak{D} \phi |V_{\phi}|^{r-1} dx \\
\leq& \frac{1}{8} I_2' + \frac{4  \| \nabla \Phi * u^{\sigma}_{\eta}\|_{L^{\infty}(0,T'; L^{\infty} (\R^d ) )}^2 }{\sigma} \| V_{\phi} \|_{L^r(\tilde{\Omega})}^r.
\end{align*}
For the third term,
\begin{align*}
J_{23}
\leq& 2 C_{\phi}  \int_{\tilde{\Omega}} \mathfrak{D} |\nabla  \Phi * u^{\sigma}_{\eta} | \phi^{\frac{3}{4}} |V_{\phi}|^{r-1} dx \\
\leq& \frac{1}{16}\cdot 4 \sigma \int_{\tilde{\Omega}} \mathfrak{D} |V_{\phi}|^r dx  + \frac{4C_{\phi}^2}{\sigma} \int_{\tilde{\Omega}} \mathfrak{D} |\nabla \Phi * u^{\sigma}_{\eta} |^2  \phi^{\frac{3}{2}} |V_{\phi}|^{r-2} dx \\
\leq& \frac{1}{16} I_3' + \frac{4 C_{\phi}^2 \| \nabla \Phi * u^{\sigma}_{\eta}\|_{L^{\infty}(0,T'; L^{\infty} (\R^d ) )}^2 }{\sigma} |\tilde{\Omega}|^{\frac{1}{r}} \| V_{\phi} \|_{L^r(\tilde{\Omega})}^r.
\end{align*}
For the last term,
\begin{align*}
J_{24}
\leq& \frac{1}{16} \sigma(r-1) \int_{\tilde{\Omega}} |V_{\phi}|^{r-2} |\nabla V_{\phi}|^2   dx  +\frac{16 (r-1)}{\sigma}  \int_{\tilde{\Omega}} \phi^2 |\nabla \bar{\mathfrak{u}} |^4 |\nabla  \Phi * u^{\sigma}_{\eta} |^2   |V_{\phi}|^{r-2} dx \\
\leq& \frac{1}{16} I_1' + \frac{16 r \| \nabla \Phi * u^{\sigma}_{\eta}\|_{L^{\infty}(0,T'; L^{\infty} (\R^d ) )}^2 }{\sigma}  \| V_{\phi} \|_{L^r(\tilde{\Omega})}^r.
\end{align*}
Combining these estimates, we obtain that
\begin{align}
\label{est_J_2}
J_2 &\leq \frac{1}{16} I_1' + \frac{1}{4} I_2' + \frac{1}{16} I_3' + \frac{16  r \| \nabla \Phi * u^{\sigma}_{\eta}\|_{L^{\infty}((0,T')\times \R^d  )}^2 }{\sigma}  \| V_{\phi} \|_{L^r(\tilde{\Omega})}^r  + \frac{4 C_{\phi}^2 \| \nabla \Phi * u^{\sigma}_{\eta}\|_{L^{\infty}((0,T')\times\R^d  )}^2 }{\sigma} |\tilde{\Omega}|^{\frac{1}{r}} \| V_{\phi} \|_{L^r(\tilde{\Omega})}^{r-1}.
\end{align}
We handle the last two terms $J_3$ and $J_4$ in \eqref{inequality_big_V_phi_L_r_norm_simpl} together.
The term $J_4$ can be rewritten as
\begin{align*}
J_4
&=
2   \int_{\tilde{\Omega}}\left( 1 - \frac{f''(\bar{\mathfrak{u}})}{(f'(\bar{\mathfrak{u}}))^2} u^{\sigma}_{\eta} \right)  (V_{\phi})^r \Delta \Phi * u^{\sigma}_{\eta} dx +2 \int_{\tilde{\Omega}} \frac{ u^{\sigma}_{\eta} \Delta \bar{\mathfrak{u}} }{f'(\bar{\mathfrak{u}})} \phi (V_{\phi})^{r-1} \Delta \Phi * u^{\sigma}_{\eta} dx \\
&\;\;\;\;+2  \int_{\tilde{\Omega}} \frac{ u^{\sigma}_{\eta} \nabla \bar{\mathfrak{u}} }{f'(\bar{\mathfrak{u}})} \cdot  \nabla \phi (V_{\phi})^{r-1} \Delta \Phi * u^{\sigma}_{\eta} dx +2 (r-1)  \int_{\tilde{\Omega}}  \frac{ u^{\sigma}_{\eta} \nabla \bar{\mathfrak{u}} }{f'(\bar{\mathfrak{u}})} \cdot  \phi (V_{\phi})^{r-2} \nabla V_{\phi} \Delta \Phi * u^{\sigma}_{\eta} dx \\
&=:- 2  \int_{\tilde{\Omega}}\left( \frac{f''(\bar{\mathfrak{u}})}{(f'(\bar{\mathfrak{u}}))^2} u^{\sigma}_{\eta}  -1  \right)  (V_{\phi})^r \Delta \Phi * u^{\sigma}_{\eta} dx +J_{41} + J_{42} + J_{43}.
\end{align*}
From the definition of $J_3$, we deduce that
\begin{align*}
J_3 + J_4 &= 2 \int_{\tilde{\Omega}} \left( \frac{f''(\bar{\mathfrak{u}})}{(f'(\bar{\mathfrak{u}}))^2}  u^{\sigma}_{\eta} -1  \right) \Delta \Phi * u^{\sigma}_{\eta} (V_{\phi})^r  dx \\
&\;\;\;\;- 2  \int_{\tilde{\Omega}}\left( \frac{f''(\bar{\mathfrak{u}})}{(f'(\bar{\mathfrak{u}}))^2} u^{\sigma}_{\eta}  -1  \right)  (V_{\phi})^r \Delta \Phi * u^{\sigma}_{\eta} dx +J_{41} + J_{42} + J_{43}\\
&= J_{41} + J_{42} + J_{43}.
\end{align*}
Let us estimate these three terms. For $J_{41}$,
\begin{align*}
J_{41}
&\leq \frac{1}{8} \cdot 2 \sigma  \int_{\tilde{\Omega}}  | \Delta \bar{\mathfrak{u}} |^2 \phi |V_{\phi}|^{r-1} dx  + \frac{e^{2 (\bar{\mathfrak{u}}^2-1)}}{4 \sigma \tilde{\omega}^2 } \int_{\tilde{\Omega}} ( u^{\sigma}_{\eta} )^2 \phi |V_{\phi}|^{r-1} |\Delta \Phi * u^{\sigma}_{\eta}|^2  dx \\
&= \frac{1}{8} I_2' +  \frac{1}{4 \sigma \tilde{\omega}^2 }  \| u^{\sigma}_{\eta}  \Delta \Phi * u^{\sigma}_{\eta}   \phi^{\frac{1}{2}} \|_{L^{2r}(\tilde{\Omega})}^2 \| V_{\phi} \|_{L^r(\tilde{\Omega})}^{r-1}.
\end{align*}
For $J_{42}$,
\begin{align*}
J_{42}
&\leq \int_{\tilde{\Omega}} |V_{\phi}|^r dx + \int_{\tilde{\Omega}} \frac{ (u^{\sigma}_{\eta})^2 |\nabla \bar{\mathfrak{u}}|^2 }{16  \tilde{\omega}^2 e^{ 2(1-\bar{\mathfrak{u}}^2) } }   |\nabla \phi|^2 |V_{\phi}|^{r-2} |\Delta \Phi * u^{\sigma}_{\eta}|^2 dx\\
&\leq \int_{\tilde{\Omega}} |V_{\phi}|^r dx + \frac{ C_{\phi}^2 }{16  \tilde{\omega}^2} \int_{\tilde{\Omega}} (u^{\sigma}_{\eta})^2  \phi^{\frac{1}{2}} |V_{\phi}|^{r-1} |\Delta \Phi * u^{\sigma}_{\eta}|^2 dx\\
&\leq \| V_{\phi} \|_{L^r(\tilde{\Omega})}^r  + \frac{ C_{\phi}^2 }{16  \tilde{\omega}^2}  \| u^{\sigma}_{\eta} \Delta \Phi * u^{\sigma}_{\eta} \phi^{\frac{1}{4}} \|_{L^{2r}(\tilde{\Omega})}^2  \| V_{\phi} \|_{L^r(\tilde{\Omega})}^{r-1}.
\end{align*}
For $J_{43}$,
\begin{align*}
J_{43}
&\leq \frac{1}{16} \sigma (r-1) \int_{\tilde{\Omega}} |V_{\phi}|^{r-2} |\nabla V_{\phi} |^2 dx + \frac{r-1}{\sigma \tilde{\omega}^2 e^{2(1-\bar{\mathfrak{u}})}}  \int_{\tilde{\Omega}} (u^{\sigma}_{\eta})^2 |\nabla \bar{\mathfrak{u}} |^2 \phi^2 |V_{\phi}|^{r-2} | \Delta \Phi * u^{\sigma}_{\eta} |^2  dx \\
&\leq \frac{1}{16}I_1' +  \frac{r-1}{\sigma \tilde{\omega}^2} \| u^{\sigma}_{\eta} \Delta \Phi * u^{\sigma}_{\eta} \phi^{\frac{1}{2}} \|_{L^{2r}(\tilde{\Omega})}^2  \| V_{\phi} \|_{L^r(\tilde{\Omega})}^{r-1}.
\end{align*}
Therefore, we obtain that
\begin{align}
\label{est_J_3_and_J_4}
J_3 + J_4 \leq \frac{1}{16} I_1' + \frac{1}{8} I_2' +rC \| u^{\sigma}_{\eta} \Delta \Phi * u^{\sigma}_{\eta} \phi^{\frac{1}{4}} \|_{L^{2r}(\tilde{\Omega})}^2  \| V_{\phi} \|_{L^r(\tilde{\Omega})}^{r-1}.
\end{align}
Moreover, by interpolation inequality, it holds
\begin{align*}
\| u^{\sigma}_{\eta} \Delta \Phi * u^{\sigma}_{\eta} \phi^{\frac{1}{4}} \|_{L^{2r}(\tilde{\Omega})} &\leq \|  \Delta \Phi * u^{\sigma}_{\eta}  \|_{L^{4r}(\tilde{\Omega})} \| u^{\sigma}_{\eta}  \phi^{\frac{1}{4}} \|_{L^{4r}(\tilde{\Omega})}\\
&\leq \| u^{\sigma}_{\eta}   \|_{L^{4r}(\tilde{\Omega})}  \| u^{\sigma}_{\eta}   \|_{L^{4r}(\tilde{\Omega})} \| \phi^{\frac{1}{4}}  \|_{{L^\infty}(\tilde{\Omega})}
\leq C.
\end{align*}
Combining \eqref{est_II_1} - \eqref{est_J_3_and_J_4},
we get
\begin{align*}
\frac{1}{r} \frac{d}{dt} \| V_{\phi} \|_{L^r(\tilde{\Omega})}^r \leq - \frac{\sigma(r-1)}{r^2}    \int_{\tilde{\Omega}} |\nabla (V_{\phi})^{\frac{r}{2}}(t, x) |^2 dx + r^2 \mathcal{M} \| V_{\phi} \|_{L^r(\tilde{\Omega})}^r   +r^2 \mathcal{M},
\end{align*}
where $\mathcal{M}$ is a positive constant which depends only on $d$, $\sigma$, $C_{\phi}$, $m$,
$\|  u_0\|_{ L^1(\R^d)\cap L^\infty(\R^d) }$ and $|\tilde{\Omega}| $.
This implies that
\begin{align}
\label{inequality_V_phi}
\frac{d}{dt} \int_{\tilde{\Omega}} |V_{\phi}|^r dx  \leq  - \frac{\sigma(r-1)}{r}    \int_{\tilde{\Omega}} |\nabla (V_{\phi})^{\frac{r}{2}}(t, x) |^2 dx + r^3 \mathcal{M} \int_{\tilde{\Omega}} |V_{\phi}|^r dx +r^3 \mathcal{M}.
\end{align}
\subsubsection{ Moser's iteration.}
It remains to prove that $\| V_{\phi} \|_{L^{\infty}(0,T'; L^{\infty}(\tilde{\Omega}) ) }$ is bounded.
Let $q_k:= 2^k+4d+4$ for $k\in \N$ and $k\geq 6$.  We infer
\begin{align*}
\frac{d}{dt} \int_{\tilde{\Omega}} |V_{\phi}|^{q_k} dx  \leq  - \frac{\sigma(q_k-1)}{q_k}    \int_{\tilde{\Omega}} |\nabla (V_{\phi})^{\frac{q_k}{2}} |^2 dx + q_k^3 \mathcal{M} \int_{\tilde{\Omega}} |V_{\phi}|^{q_k} dx +q_k^3 \mathcal{M}.
\end{align*}
Furthermore,
\begin{align*}
q_k^3 \mathcal{M}\int_{\tilde{\Omega}} |V_{\phi}|^{q_k} dx
&\leq q_k^3 \mathcal{M} S_d^{-\theta_1} \| V_{\phi} \|_{L^{q_{k-1}} (\tilde{\Omega})}^{( 1 - \theta_1 )q_k } \| \nabla (V_{\phi})^{\frac{q_k}{2}} \|_{L^2 (\tilde{\Omega})}^{2\theta_1}\\
&\leq \frac{\sigma (q_k-1)}{2q_k} \int_{\tilde{\Omega}} | \nabla (V_{\phi})^{\frac{q_k}{2}}|^2 dx
+ (1-\theta_1) \left(  \frac{\sigma (q_k-1)}{2q_k} \cdot \frac{1}{\theta_1}  \right)^{-\frac{\theta_1}{1-\theta_1}}
(S_d^{-\theta_1}q_k^3 \mathcal{M})^\frac{1}{1-\theta_1}  \| V_{\phi} \|_{L^{q_{k-1}} (\tilde{\Omega})}^{q_k},
\end{align*}
where $ \displaystyle \theta_1= \frac{\frac{1}{q_{k-1}} - \frac{1}{q_k}}{ \frac{1}{q_{k-1}} - \frac{d-2}{dq_k}}$.
With this observation at hand, we have
\begin{align*}
\frac{d}{dt} \int_{\tilde{\Omega}} |V_{\phi}|^{q_k} dx  \leq&  -\frac{\sigma (q_k-1)}{2q_k} \int_{\tilde{\Omega}} | \nabla (V_{\phi})^{\frac{q_k}{2}}|^2 dx  + q_k^3 \mathcal{M}  \\
&+ \left(  \frac{\sigma (q_k-1)}{ 2 q_k} \cdot \frac{1}{\theta_1}  \right)^{-\frac{\theta_1}{1-\theta_1}} (1-\theta_1) \left( S_d^{-\theta_1} q_k^3 \mathcal{M} \right)^{\frac{1}{1-\theta_1}}  \| V_{\phi} \|_{L^{q_{k-1}} (\tilde{\Omega})}^{q_k}.
\end{align*}
It is easy to see that
\begin{align*}
\frac{d}{dt} \int_{\tilde{\Omega}} |V_{\phi}|^{q_k} dx  \leq& - \int_{\tilde{\Omega}} |V_{\phi}|^{q_k} dx +q_k^3 \mathcal{M} \\
&+ (1-\theta_1) \left(  \frac{\sigma (q_k-1)}{2q_k} \cdot \frac{1}{\theta_1}  \right)^{-\frac{\theta_1}{1-\theta_1}} S_d^{\frac{-\theta_1}{1-\theta_1}} \left( 1+ (q_k^3\mathcal{M})^{\frac{1}{1-\theta_1}} \right) \| V_{\phi} \|_{L^{q_{k-1}} (\tilde{\Omega})}^{q_k}\\
\leq& - \int_{\tilde{\Omega}} |V_{\phi}|^{q_k} dx +q_k^3 \mathcal{M} + C q_k^{\frac{3}{1-\theta_1}} \| V_{\phi} \|_{L^{q_{k-1}}(\tilde{\Omega})}^{q_k},
\end{align*}
where $C$ appeared in this subsection denotes a positive constant independent of $q_k$.
By simple calculations, we derive that $\frac{1}{1-\theta_1}\leq d$.
Let $y_k(t):= \| V_{\phi}(t, \cdot) \|_{L^{q_{k}}(\tilde{\Omega})}^{q_k} $. Since $\frac{q_k}{q_{k-1}} \leq 2$, we obtain that
\begin{align*}
y_k'(t) \leq  -y_k(t)   + C q_k^{3d}  (y_{k-1} (t))^{\frac{q_k}{q_{k-1}}} +C q_k^{3d}  \leq -y_k(t)   + C q_k^{3d} \max \{ 1, y_{k-1}^2(t) \}  .
\end{align*}
This implies that
\begin{align*}
e^t y_k(t) & \leq  y_k(0)+ C (4d)^{3d}16^{dk}  \max \{ 1, \sup_{t\geq 0} y_{k-1}^2(t) \}  (e^t-1).
\end{align*}
We know that
\begin{align*}
y_k(0)&=  \|\mathfrak{D}(0)\phi \|_{L^{q_{k}}(\tilde{\Omega})}^{q_k} \leq K_0^{q_k}= (K_0^{\frac{q_k}{2^k}})^{2^k}:= K^{2^k},
\end{align*}
where $K_0:= \max \{ 1, \|\mathfrak{D}(0)\phi \|_{L^1(\tilde{\Omega})}, \|\mathfrak{D}(0)\phi \|_{L^{\infty}(\tilde{\Omega})} \}$.
Defining $a_k:= C (4d)^{3d}16^{dk}$, it holds
\begin{align*}
y_k(t)
&\leq   y_k(0) +  a_k  \max \{ 1, \sup_{t\geq 0} y_{k-1}^2(t) \}  \\
&\leq a_k (K^{2^k} + \sup_{t\geq 0}(y_{k-1}^2(t) ) \\
&\leq 2 a_k \max\{ K^{2^k} , \sup_{t\geq 0}(y_{k-1}^2(t)  )\}.
\end{align*}
Therefore, we infer
\begin{align*}
y_k(t)
&\leq \prod_{j=0}^{k-1} (2a_{k-j})^{2^j} \max \{ K^{2^k}, \sup_{t\geq 0}(y_0^{2^k}(t))\}\\
&\leq \prod_{j=0}^{k-1} \left(2 C (4d)^{3d}16^{dk} \right)^{2^j}  \prod_{j=0}^{k-1}
\left( 16^{-dj} \right)^{2^j}  \max \{ K^{2^k}, \sup_{t\geq 0}(y_0^{2^k}(t))\} \\
&\leq \left( 2 C (4d)^{3d} \right)^{2^k-1}  \left( 16^d \right)^{2^{k+1} -k -2 }  \max \{ K^{2^k}, \sup_{t\geq 0}(y_0^{2^k}(t))\}.
\end{align*}
Hence,
\begin{align*}
\|V_{\phi}(t,\cdot) \|_{L^{q_{k}}(\tilde{\Omega})} \leq  \left( 2 C (4d)^{3d} \right)^{\frac{2^k-1}{q_k}}  \left( 16^d \right)^{ \frac{2^{k+1}-k-2}{q_k}} \max \{ K^{\frac{2^k}{q_k}}, \sup_{t\geq 0}(y_0^{\frac{2^k}{q_k}}(t))\}.
\end{align*}
Since $q_k= 2^k+4d+4$ for $k \in \N$ and $k\geq 6$, it follows that $\frac{2^k-1}{q_k}\leq 1$, $\frac{2^{k+1}-k-2}{q_k}\leq 2$ and $\frac{2^k}{q_k}\leq 1$.
As a consequence,
\begin{align*}
\|V_{\phi}(t,\cdot) \|_{L^{q_{k}}(\tilde{\Omega})} \leq  2 C (4d)^{3d}  16^{2d} \max \{ K, \sup_{t\geq 0}(y_0(t))\}.
\end{align*}
Therefore, we get
\begin{align}
\label{Bound_for_V_phi}
\|V_{\phi} \|_{L^{\infty}(0,T'; L^{\infty}(\tilde{\Omega}))} \leq  2 C (4d)^{3d}  16^{2d}  \max \{ K, \sup_{t\geq 0}(y_0(t))\} =: C_{\infty},
\end{align}
where $C_{\infty}$ is independent of $\eta$. Recall that $u^{\sigma}_{\eta} = f(\bar{\mathfrak{u}})$ and $\mathfrak{D} = |\nabla \bar{\mathfrak{u}}|^2 $. We conclude that
\begin{align}
\label{bound_grad_u_eta}
\sup_{0<t<T'} \| \nabla u^{\sigma}_{\eta}  \|_{L^{\infty}(\tilde{\Omega}_k)} &= \sup_{0<t<T'} \| f'(\bar{\mathfrak{u}}) \nabla \bar{\mathfrak{u}}   \|_{L^{\infty}(\tilde{\Omega}_k)}\\
&\leq   \sup_{0<t<T'} \| f'(\bar{\mathfrak{u}})    \|_{L^{\infty}(\tilde{\Omega}_k)} \left( \sup_{0<t<T'} \| \mathfrak{D} \|_{L^{\infty}(\tilde{\Omega}_k)}  \right)^{\frac{1}{2}} \notag \\
&\leq 4 e \tilde{\omega} (C_{\infty})^{\frac{1}{2}}. \notag
\end{align}
By repeating a similar argument in $\tilde{\Omega}_k$ for $k \in \{ 1, \ldots , k_0 \} $, we obtain the bound for
$\sup_{0<t<T'} \| \nabla u^{\sigma}_{\eta} \|_{L^{\infty}(\R^d)}$.

This finishes the proof of Proposition \ref{grau}.

\subsection{Global Existence of the approximated solutions}\label{global_existence_approximate}
This subsection is devoted to establishing global well-posedness of solution to the system \eqref{u_sigma_approximation}.
\begin{proposition}\label{global}
For any $T>0$, let the assumptions in Theorem \ref{theorem_u_sigma} hold, then the problem \eqref{u_sigma_approximation} has a unique solution $u^{\sigma}_{\eta}\in L^\infty(0,T;H^s(\R^d))$.
\end{proposition}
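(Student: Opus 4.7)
The plan is to run a standard continuation argument based on an $H^s$ energy estimate that exploits the uniform $L^\infty$ bounds on $u^\sigma_\eta$ and $\nabla u^\sigma_\eta$ obtained in Propositions \ref{prop_local_approximate} and \ref{grau}. Let $T^\ast$ denote the maximal time for which the local $H^s$ solution constructed in Proposition \ref{prop_local_approximate} can be continued. If $T^\ast < T$, I will derive a bound on $\|u^\sigma_\eta(t)\|_{H^s(\R^d)}$ uniform on $[0,T^\ast)$; applying Proposition \ref{prop_local_approximate} at a time close to $T^\ast$ with that bound as the new initial datum then contradicts the maximality of $T^\ast$.

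First I would note that the equation can be rewritten in the form
\begin{equation*}
\partial_t u^\sigma_\eta = \sigma \Delta u^\sigma_\eta + m\,\nabla\!\cdot\!\bigl((u^\sigma_\eta+\eta)^{m-1}\nabla u^\sigma_\eta\bigr) - \nabla\!\cdot\!\bigl(u^\sigma_\eta\,\nabla\Phi\ast u^\sigma_\eta\bigr),
\end{equation*}
where, by Propositions \ref{prop_local_approximate} and \ref{grau}, the quantities $\|u^\sigma_\eta\|_{L^\infty}$ and $\|\nabla u^\sigma_\eta\|_{L^\infty}$ are bounded uniformly on $[0,T^\ast)$. Hence the coefficient $(u^\sigma_\eta+\eta)^{m-1}$ and its gradient are bounded in $L^\infty$ uniformly in $\eta$, and $\|\nabla\Phi\ast u^\sigma_\eta\|_{L^\infty}\leq C\|u^\sigma_\eta\|_{L^1\cap L^\infty}$ is bounded for the same reason.

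Next, I would apply $D^\alpha$ for $|\alpha|\leq s$, multiply by $D^\alpha u^\sigma_\eta$ and integrate. The linear diffusion produces the favorable term $-\sigma\|\nabla D^\alpha u^\sigma_\eta\|_{L^2}^2$. For the nonlinear diffusion one integrates by parts to obtain $-m\int (u^\sigma_\eta+\eta)^{m-1}|\nabla D^\alpha u^\sigma_\eta|^2\,dx$ plus commutator remainders, which are handled by the Kato--Ponce / Moser estimate
\begin{equation*}
\bigl\|D^\alpha\bigl((u^\sigma_\eta+\eta)^{m-1}\nabla u^\sigma_\eta\bigr)-(u^\sigma_\eta+\eta)^{m-1}D^\alpha\nabla u^\sigma_\eta\bigr\|_{L^2}\leq C\bigl(\|\nabla(u^\sigma_\eta+\eta)^{m-1}\|_{L^\infty}+\|\nabla u^\sigma_\eta\|_{L^\infty}\bigr)\|u^\sigma_\eta\|_{H^s},
\end{equation*}
together with the chain-rule/Moser inequality $\|(u^\sigma_\eta+\eta)^{m-1}\|_{H^s}\leq C(\|u^\sigma_\eta\|_{L^\infty})\|u^\sigma_\eta\|_{H^s}$. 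The aggregation term is estimated by writing $D^\alpha(u^\sigma_\eta\nabla\Phi\ast u^\sigma_\eta)$ via the Leibniz rule and using Young's inequality together with $\|\nabla\Phi\ast u^\sigma_\eta\|_{W^{s,\infty}}\leq C\|u^\sigma_\eta\|_{H^s\cap L^\infty}$. Summing over $|\alpha|\leq s$ and absorbing $\frac{\sigma}{2}\sum_{|\alpha|\leq s}\|\nabla D^\alpha u^\sigma_\eta\|_{L^2}^2$ on the left, I expect to obtain
\begin{equation*}
\tfrac{d}{dt}\|u^\sigma_\eta\|_{H^s}^2\leq C(1+\|u^\sigma_\eta\|_{H^s}^2),
\end{equation*}
with $C$ depending only on $\sigma,m,s,d,\|u^\sigma_\eta\|_{L^\infty},\|\nabla u^\sigma_\eta\|_{L^\infty}$, all controlled uniformly on $[0,T^\ast)$. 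Gr\"onwall's lemma then yields $\|u^\sigma_\eta(t)\|_{H^s}\leq (\|u^\sigma_0\|_{H^s}+1)e^{CT}$ on $[0,T^\ast)$.

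The main obstacle is making sure the $H^s$ estimate depends only on $L^\infty$ information about $u^\sigma_\eta$ and $\nabla u^\sigma_\eta$ (and not, for example, on $\eta$ through the degenerate factor $(u^\sigma_\eta+\eta)^{m-1}$); this is exactly why the Bernstein estimate in Proposition \ref{grau} was pushed through using $\sigma\Delta u$ rather than $\Delta(u+\eta)^m$ as the absorbing diffusion. Uniqueness at the global level is then immediate by repeating the $L^2$ difference argument of \eqref{add8}--\eqref{add9} on any subinterval, since the coefficients $(u^\sigma_\eta+\eta)^{m-1}$ and $\nabla\Phi\ast u^\sigma_\eta$ are globally bounded. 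Combining continuation with uniqueness gives the unique global solution $u^\sigma_\eta\in L^\infty(0,T;H^s(\R^d))$ on the arbitrary given interval $[0,T]$.
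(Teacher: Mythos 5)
Your proof takes essentially the same route as the paper: a continuation argument based on an $H^s$ energy estimate using the $W^{1,\infty}$ bounds from Propositions \ref{prop_local_approximate} and \ref{grau}, the $D^\alpha$ commutator estimates, and Gr\"onwall. One small correction: the paper explicitly allows the constant in the $H^s$ estimate to depend on $\eta$ (for non-integer $m$ the Moser/commutator bounds on $D^\alpha(u^\sigma_\eta+\eta)^{m-1}$ do blow up as $\eta\to 0$), so your claim that the $H^s$ estimate depends only on $L^\infty$ information uniformly in $\eta$ is not quite right; it is, however, also unnecessary here, since the $\eta$-uniform bounds actually used in the limit $\eta\to 0$ are the $L^\infty$/$W^{1,\infty}$ ones of Lemma \ref{lemma_estimates_u_sigma_eta}, not the $H^s$ bound of this proposition.
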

\begin{proof}
Define
\begin{align*}
T_{\max} := \sup \Big\{ T' \in (0,T):  \text{ the problem} \ \eqref{u_sigma_approximation} \ \text{possesses solution} \ u \in L^{\infty}(0,T';H^s(\R^d)),
\ s \in \Big(\frac{d}{2} +3, \infty \Big) \cap \N \Big\}.
\end{align*}
In order to obtain $T_{\max}=T$, we only need to prove that
\begin{align*}
\| u^{\sigma}_{\eta} \|_{L^{\infty}(0,T'; H^s(\R^d))} \leq C, \;\;\; \forall T' < T_{\max},
\end{align*}
where $C$ appeared in this subsection is a positive constant which depends on $\sigma$, $d$, $m$, $s$,
$\|u_0^\sigma\|_{H^s(\R^d)}$ and $\eta$.

In the following estimate, we will use the estimates obtained in Propositions \ref{prop_local_approximate} and \ref{grau}, i.e.
$$
\|u^\sigma_\eta\|_{L^\infty(0,T;W^{1,\infty}(\R^d))}\leq C (\sigma).
$$

We apply $D^{\alpha}$ $(|\alpha|\leq s)$ to the system \eqref{u_sigma_approximation}, multiply resulting equation by $D^{\alpha} u^{\sigma}_{\eta}$ and integrate it over $\R^d$ in order to obtain that
\begin{align*}
&\frac{1}{2} \frac{d}{dt}  \int_{\R^d} | D^{\alpha}u^{\sigma}_{\eta} |^2 dx + \sigma  \int_{\R^d} |\nabla D^{\alpha}u^{\sigma}_{\eta} |^2 dx \\
=&   \int_{\R^d}   D^{\alpha} (u^{\sigma}_{\eta} \nabla \Phi * u^{\sigma}_{\eta} ) \cdot \nabla D^{\alpha} u^{\sigma}_{\eta} \, dx
 -  \int_{\R^d}  D^{\alpha} \nabla ((u^{\sigma}_{\eta}+ \eta)^{m} )  \cdot \nabla D^{\alpha} u^{\sigma}_{\eta} \, dx\\
=:&I_1+I_2.
\end{align*}
The term $I_1$ can be estimated as follows
\begin{align*}
I_1
\leq&  C \| \nabla D^{\alpha} u^{\sigma}_{\eta} \|_{L^2(\R^d)}   \| D^{\alpha}( \nabla \Phi * u^{\sigma}_{\eta}) \|_{L^2(\R^d)} + C \| \nabla D^{\alpha} u^{\sigma}_{\eta} \|_{L^2(\R^d)}  \| D^{\alpha} u^{\sigma}_{\eta}  \|_{L^2(\R^d)} \\
\leq& \frac{\sigma}{4} \int_{\R^d} |\nabla D^{\alpha} u^{\sigma}_{\eta}|^2 dx + C \int_{\R^d} |D^{\alpha} u^{\sigma}_{\eta} |^2 dx.
\end{align*}
Now we consider the term $I_2$.
\begin{align*}
I_2
=&  - m \int_{\R^d}   (u^{\sigma}_{\eta}+ \eta)^{m-1} | \nabla D^{\alpha} u^{\sigma}_{\eta}|^2 dx - m \int_{\R^d} \left(  D^{\alpha} \left( (u^{\sigma}_{\eta}+ \eta)^{m-1} \nabla u^{\sigma}_{\eta} \right) - (u^{\sigma}_{\eta}+ \eta)^{m-1} \nabla D^{\alpha} u^{\sigma}_{\eta} \right) \cdot \nabla D^{\alpha} u^{\sigma}_{\eta} \, dx\\
\leq& - m \int_{\R^d}   (u^{\sigma}_{\eta}+ \eta)^{m-1} | \nabla D^{\alpha} u^{\sigma}_{\eta}|^2 dx +  \frac{\sigma}{4} \int_{\R^d} |\nabla D^{\alpha} u^{\sigma}_{\eta}|^2 dx + C \int_{\R^d} \left|  D^{\alpha} \left( (u^{\sigma}_{\eta}+ \eta)^{m-1} \nabla u^{\sigma}_{\eta} \right) - (u^{\sigma}_{\eta}+ \eta)^{m-1} \nabla D^{\alpha} u^{\sigma}_{\eta} \right|^2 dx\\
\leq& - m \int_{\R^d}   (u^{\sigma}_{\eta}+ \eta)^{m-1} | \nabla D^{\alpha} u^{\sigma}_{\eta}|^2 dx +  \frac{\sigma}{4} \int_{\R^d} |\nabla D^{\alpha} u^{\sigma}_{\eta}|^2 dx   \\
& + C \| \nabla (u^{\sigma}_{\eta}+ \eta)^{m-1}\|_{L^{\infty}(\R^d)}^2 \| D^{\alpha} u^{\sigma}_{\eta} \|_{L^2(\R^d)}^2 + C \| \nabla u^{\sigma}_{\eta}  \|_{L^{\infty}(\R^d)}^2  \|  D^{\alpha} (u^{\sigma}_{\eta}+ \eta)^{m-1} \|_{L^2(\R^d)}^2.
\end{align*}
From these estimates, we derive that
\begin{align*}
&\frac{1}{2} \frac{d}{dt} \| u^{\sigma}_{\eta} \|_{H^s(\R^d)}^2 + \frac{\sigma}{2} \sum_{|\alpha|=0}^s \int_{\R^d} | \nabla D^{\alpha}  u^{\sigma}_{\eta} |^2 dx + m  \sum_{|\alpha|=0}^s \int_{\R^d} ( u^{\sigma}_{\eta}+ \eta )^{m-1} | \nabla D^{\alpha}  u^{\sigma}_{\eta} |^2 dx \\
\leq& C \| u^{\sigma}_{\eta} \|_{H^s(\R^d)}^2 + C \| u^{\sigma}_{\eta} + \eta \|_{L^{\infty}(\R^d)}^{2(m-2)} \| \nabla u^{\sigma}_{\eta} \|_{L^{\infty}(\R^d)}^2  \| u^{\sigma}_{\eta} \|_{H^s(\R^d)}^2
+ C \sum_{|\alpha|=0}^s \| D^{\alpha} ( u^{\sigma}_{\eta} + \eta )^{m-1}\|_{L^2(\R^d)}^2\\
\leq& C \| u^{\sigma}_{\eta} \|_{H^s(\R^d)}^2  +C \sum_{|\alpha|=0}^s\| D^{\alpha} (u^{\sigma}_{\eta} + \eta)^{m-[m]}\|_{L^2(\R^d)}^2\leq  C \| u^{\sigma}_{\eta} \|_{H^s(\R^d)}^2,
\end{align*}
where $C$ depends on $\sigma$ and $\eta$.
By Gr\"onwall's inequality, we deduce that
\begin{align*}
\sup_{t\in (0,T')} \| u^{\sigma}_{\eta} \|_{H^s(\R^d)}^2 \leq \left( C+\| u_0^\sigma \|_{H^s(\R^d)}^2 \right) e^{C T_{\max}} \ \text{for any} \ T'< T_{\max}.
\end{align*}
\end{proof}

\subsection{Weak solutions to problem \texorpdfstring{ \eqref{generalized_equation_u_sigma}  }{Limit of u sigma eta as eta goes to zero} }
Our goal of this subsection is to take the limit $\eta \rightarrow 0$ in the weak formulation of \eqref{u_sigma_approximation}.
Let us start with the following lemma.
\begin{lemma}
\label{lemma_estimates_u_sigma_eta}
Let the assumptions in Theorem \ref{theorem_u_sigma} hold
and $u^{\sigma}_{\eta}$ be the solution of \eqref{u_sigma_approximation}. Then the following estimates hold
\begin{align*}
&(i)\;\| u^{\sigma}_{\eta} \|_{L^{\infty}(0,T; W^{1, \infty}(\R^d))   } + \| u^{\sigma}_{\eta} \|_{L^{\infty}(0,T; L^q(\R^d))   } \leq C,\;\;\;
\forall q \in [1, \infty],\\
&(ii)\;\|\Phi*u^{\sigma}_{\eta} \|_{L^{\infty}(0,T; W^{2, q}(\R^d))   } \leq C,\;\;\;\forall q \in (1, \infty),\\
&(iii)\;\| u^{\sigma}_{\eta} \|_{L^2(0,T; H^1(\R^d))   } \leq C,
\end{align*}
where $C$ is a positive constant independent of $\eta$.
\end{lemma}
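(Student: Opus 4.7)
\medskip

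\noindent\textbf{Proof proposal.} The three estimates follow by assembling previously-established facts rather than by a genuinely new argument, so the plan is to identify which prior result supplies each bound and to perform a short energy estimate for (iii).

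For (i), the $L^q$ bound for every $q\in[1,\infty]$ is exactly the content of \eqref{add10} in Proposition \ref{prop_local_approximate}, whose constant is independent of $\sigma$ and $\eta$. The $L^\infty$ gradient bound is Proposition \ref{grau}, again with a constant independent of $\eta$. Both estimates were proved on the local interval $(0,T')$, but the arguments (Moser iteration and the Bernstein computation in Section \ref{Section_estimate_grad_u_sigma_eta}) depend only on $\|u_0\|_{L^1\cap L^\infty}$, $\|\nabla u_0^\sigma\|_{L^1\cap L^\infty}$, $\sigma$, $m$, and $d$. Combined with the global existence from Proposition \ref{global}, they therefore extend to the full interval $(0,T)$ without change. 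So (i) is essentially a restatement; one only has to note that the Bernstein argument is carried out on each cut-off region $\tilde{\Omega}_k(t)$ and is in fact independent of the time interval.

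For (ii), I would invoke standard estimates for the Newtonian potential. The identity $-\Delta(\Phi * u^\sigma_\eta) = u^\sigma_\eta$ together with the Calderón--Zygmund estimate gives, for every $1<q<\infty$,
\begin{equation*}
\|D^2(\Phi * u^\sigma_\eta)\|_{L^q(\R^d)} \leq C\|u^\sigma_\eta\|_{L^q(\R^d)}.
\end{equation*}
For the lower-order terms, the Hardy--Littlewood--Sobolev inequality yields $\|\nabla\Phi * u^\sigma_\eta\|_{L^q(\R^d)} \leq C\|u^\sigma_\eta\|_{L^{p}(\R^d)}$ with $\tfrac{1}{p} = \tfrac{1}{q}+\tfrac{1}{d}$, and similarly for $\Phi * u^\sigma_\eta$ with a gap of $\tfrac{2}{d}$; for $d\geq 3$ and $u^\sigma_\eta\in L^1\cap L^\infty$, one can always find an admissible $p\in[1,\infty]$. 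Applying (i) in every one of these estimates closes the bound for $\|\Phi * u^\sigma_\eta\|_{L^\infty(0,T;W^{2,q})}$ uniformly in $\eta$.

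For (iii), I would carry out the usual $L^2$ energy estimate: multiply \eqref{u_sigma_approximation} by $u^\sigma_\eta$ and integrate over $\R^d$ to get
\begin{equation*}
\frac{1}{2}\frac{d}{dt}\|u^\sigma_\eta\|_{L^2}^2 + \sigma\|\nabla u^\sigma_\eta\|_{L^2}^2 + m\int_{\R^d}(u^\sigma_\eta+\eta)^{m-1}|\nabla u^\sigma_\eta|^2\,dx = \int_{\R^d} u^\sigma_\eta\,\nabla\Phi * u^\sigma_\eta \cdot \nabla u^\sigma_\eta\,dx.
\end{equation*}
The right-hand side is bounded by Young's inequality as
\begin{equation*}
\tfrac{\sigma}{2}\|\nabla u^\sigma_\eta\|_{L^2}^2 + \tfrac{1}{2\sigma}\|u^\sigma_\eta\|_{L^\infty}^2\|\nabla\Phi*u^\sigma_\eta\|_{L^2}^2,
\end{equation*}
and the last factor is controlled by (ii) with $q=2$, while $\|u^\sigma_\eta\|_{L^\infty}$ is controlled by (i). Integrating in $t\in(0,T)$ and dropping the nonnegative degenerate diffusion term absorbs the gradient on the right and yields $\|u^\sigma_\eta\|_{L^2(0,T;H^1)} \leq C$ with $C$ independent of $\eta$. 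There is no serious obstacle here; the only point to be careful about is staying away from the endpoint $q\in\{1,\infty\}$ in the Calderón--Zygmund step, which is why (ii) is stated for $q\in(1,\infty)$.
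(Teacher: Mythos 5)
Your proposal follows essentially the same plan as the paper's proof: (i) is read off from Proposition \ref{prop_local_approximate} (Moser iteration for $L^q$) and Proposition \ref{grau} (Bernstein estimate for the gradient), (ii) from elliptic regularity for the Newtonian potential, and (iii) from the $L^2$ energy identity obtained by testing \eqref{u_sigma_approximation} against $u^\sigma_\eta$; the paper states (ii) in one line ("basic results from elliptic theory") and your Calder\'on--Zygmund plus Hardy--Littlewood--Sobolev unpacking is the natural filling-in of that sentence.

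One caution on the lower-order terms in (ii): the HLS step does not in fact supply an admissible exponent for every $q\in(1,\infty)$. Since $\Phi\sim |x|^{2-d}$ and $\nabla\Phi\sim|x|^{1-d}$, even for $u^\sigma_\eta\in L^1\cap L^\infty$ one has $\Phi*u^\sigma_\eta\in L^q$ only for $q>\tfrac{d}{d-2}$ and $\nabla\Phi*u^\sigma_\eta\in L^q$ only for $q>\tfrac{d}{d-1}$, so "one can always find an admissible $p$" fails near the endpoint $q\to 1^+$. The paper's own one-line treatment of (ii) quietly elides the same point, and the downstream uses (e.g.\ $\|\nabla\Phi*u^\sigma_\eta\|_{L^2}$ in (iii), and the weak-$*$ compactness in Lemma \ref{leta}) only need the range of $q$ where the bound genuinely holds, so nothing breaks, but your proof should not claim the full range $q\in(1,\infty)$ for the zeroth- and first-order terms of the potential.
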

\begin{proof}
The estimate $(i)$ is a direct consequence of the Moser iteration for $\| u^{\sigma}_{\eta} \|_{L^{\infty}(0,T; L^{ \infty}(\R^d))   } $ in Proposition \ref{prop_local_approximate} and the Bernstein type estimate for $\| \nabla u^{\sigma}_{\eta} \|_{L^{\infty}(0,T; L^{\infty}(\R^d))   } $ in Proposition \ref{grau}.
Using basic results from the theory of elliptic partial differential equation and $(i)$,  we obtain $(ii)$ directly.
Multiplying equation \eqref{u_sigma_approximation} by $u^{\sigma}_{\eta}$ and integrating over $\R^d$, we deduce that
\begin{align*}
&\frac{1}{2} \frac{d}{dt} \int_{\R^d} |u^{\sigma}_{\eta}|^2 dx +  \sigma  \int_{\R^d}  |\nabla u^{\sigma}_{\eta} |^2 dx +m \int_{\R^d}  (u^{\sigma}_{\eta}+\eta)^{m-1} |\nabla  u^{\sigma}_{\eta}|^2   dx \\
 \leq& \frac{\sigma}{2} \int_{\R^d} |\nabla u^{\sigma}_{\eta}|^2 dx + C \| u^{\sigma}_{\eta} \|_{L^{\infty}(\R^d)}^2 \| \nabla \Phi * u^{\sigma}_{\eta} \|_{L^2(\R^d)}^2.
\end{align*}
And combining $(i)$ and $(ii)$, we derive $(iii)$.
\end{proof}
With this lemma at hand, we are ready to prove the following result.
\begin{lemma}\label{leta}
Let the assumptions in Theorem \ref{theorem_u_sigma} hold,
then there exist a subsequence of $(u^{\sigma}_{\eta})_\eta$  and a function $u^{\sigma}$ such that as $\eta\rightarrow 0$,
\begin{align}
&u^{\sigma}_{\eta} \overset{*}\rightharpoonup u^{\sigma}\quad {\rm{in}}\quad L^{l}((0,T)\times\R^d)\;\;\;l\in(1,\infty],\label{i}\\
&u^{\sigma}_{\eta} \rightarrow u^{\sigma}\quad {\rm{in}}\quad L^2(0,T; L^2( \R^d) ) ,\label{ii}\\
&\nabla ( u^{\sigma}_{\eta} +\eta)^m \rightharpoonup  \nabla  ( u^{\sigma})^m\quad  {\rm{in}}\quad L^2(0,T; L^2(\R^d)  ),\label{iii}\\
&\Phi * u^{\sigma}_{\eta} \overset{*}{\rightharpoonup} \Phi * u^{\sigma}\quad {\rm{in}}\quad L^{\infty}(0,T; W^{2, q}(\R^d))
\quad  q \in (1, \infty ).\label{iv}
\end{align}
\end{lemma}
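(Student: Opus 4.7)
The plan is to combine weak-$*$ compactness from the uniform bounds collected in Lemma~\ref{lemma_estimates_u_sigma_eta} with an Aubin--Lions argument localised on balls and a tightness-at-infinity estimate in order to extract a subsequence converging in the four senses required.

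First, for \eqref{i}, Lemma~\ref{lemma_estimates_u_sigma_eta}(i) yields uniform bounds of $u^\sigma_\eta$ in $L^\infty(0,T;L^q(\R^d))$ for every $q\in[1,\infty]$, so Banach--Alaoglu produces a subsequence with a weak-$*$ limit $u^\sigma$ in $L^l((0,T)\times\R^d)$ for every $l\in(1,\infty]$. For \eqref{iv}, Lemma~\ref{lemma_estimates_u_sigma_eta}(ii) gives $\Phi*u^\sigma_\eta$ uniformly bounded in $L^\infty(0,T;W^{2,q}(\R^d))$, and because the convolution with $\Phi$ is a continuous linear operator from $L^q$ to $W^{2,q}$ (Calder\'on--Zygmund theory), it commutes with the weak-$*$ limit and therefore forces $\Phi*u^\sigma_\eta \overset{*}{\rightharpoonup}\Phi*u^\sigma$.

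For the strong convergence \eqref{ii}, I would apply Aubin--Lions locally. Lemma~\ref{lemma_estimates_u_sigma_eta}(iii) gives $u^\sigma_\eta$ uniformly bounded in $L^2(0,T;H^1(\R^d))$. Rewriting \eqref{u_sigma_approximation} as
\begin{align*}
\partial_t u^\sigma_\eta = \nabla\cdot\bigl[\sigma\nabla u^\sigma_\eta - u^\sigma_\eta\nabla\Phi * u^\sigma_\eta + \nabla(u^\sigma_\eta+\eta)^m\bigr],
\end{align*}
the uniform $L^\infty$ bound on $u^\sigma_\eta$ together with $\nabla(u^\sigma_\eta+\eta)^m = m(u^\sigma_\eta+\eta)^{m-1}\nabla u^\sigma_\eta$ shows that the bracketed flux is uniformly bounded in $L^2((0,T)\times\R^d)$, so $\partial_t u^\sigma_\eta$ is uniformly bounded in $L^2(0,T;H^{-1}(\R^d))$. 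Aubin--Lions on each ball $B_R$ then yields strong convergence in $L^2(0,T;L^2(B_R))$ along a subsequence. To upgrade to the whole space, I would reproduce the first moment estimate used in the proof of Proposition~\ref{prop_local_approximate}, multiplying the equation by $|x|$ to obtain $\sup_{t\in(0,T)}\int_{\R^d} u^\sigma_\eta\,|x|\,dx \leq C$ uniformly in $\eta$; combined with the uniform $L^\infty$ bound this controls the tails of $|u^\sigma_\eta|^2$, and a diagonal argument in $R\to\infty$ yields strong convergence in $L^2((0,T)\times\R^d)$.

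For \eqref{iii}, the same computation shows that $\nabla(u^\sigma_\eta+\eta)^m$ is uniformly bounded in $L^2((0,T)\times\R^d)$, so up to a further subsequence it has a weak limit. Extracting an a.e.\ convergent subsequence from \eqref{ii} and invoking dominated convergence (with the uniform $L^\infty$ bound), I obtain $(u^\sigma_\eta+\eta)^m \to (u^\sigma)^m$ in $L^2((0,T)\times\R^d)$, hence $\nabla(u^\sigma_\eta+\eta)^m \to \nabla(u^\sigma)^m$ in $\mathcal{D}'((0,T)\times\R^d)$; uniqueness of distributional limits identifies the weak $L^2$ limit with $\nabla(u^\sigma)^m$. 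The main obstacle is the passage from local to global compactness in \eqref{ii}: Aubin--Lions gives only $L^2_{\mathrm{loc}}$ compactness, and the moment bound $\int u^\sigma_\eta|x|\,dx \leq C$, which does not follow directly from the estimates recorded in Lemma~\ref{lemma_estimates_u_sigma_eta}, is the one ingredient that must be added in order to close the argument in $\R^d$.
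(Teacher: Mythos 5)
Your proposal is correct and follows essentially the same route as the paper: weak-$*$ compactness from the uniform bounds in Lemma \ref{lemma_estimates_u_sigma_eta} for \eqref{i} and \eqref{iv}, the time-derivative bound in $L^2(0,T;H^{-1})$ plus Aubin--Lions on balls and a diagonal argument for \eqref{ii}, and the uniform $L^2$ bound on $\nabla(u^\sigma_\eta+\eta)^m$ together with identification of the limit for \eqref{iii}. The one place where you go beyond the paper is the last paragraph of \eqref{ii}: the paper stops at ``growing $d$-dimensional balls and then use diagonal argument,'' which by itself yields only $L^2_{\mathrm{loc}}$ compactness, whereas the stated conclusion is strong convergence in $L^2(0,T;L^2(\R^d))$. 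Your observation that a tightness estimate is required, and your proposal to reproduce the first-moment bound $\sup_t\int_{\R^d}u^\sigma_\eta|x|\,dx\le C$ (patterned on the one used inside the proof of Proposition \ref{prop_local_approximate} for the linearized equation), is the correct way to close that gap, and it is indeed not stated in Lemma \ref{lemma_estimates_u_sigma_eta}. For \eqref{iii} the paper instead tests $\nabla(u^\sigma_\eta+\eta)^m$ against a fixed $\psi\in C_0^\infty$, integrates by parts, passes to the limit using \eqref{ii}, and integrates back; your a.e.-subsequence-plus-dominated-convergence route reaches the same identification with essentially the same ingredients, so the two are interchangeable.
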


\begin{proof}
\eqref{i} and \eqref{iv} are direct consequences of  Lemma \ref{lemma_estimates_u_sigma_eta} $(i)$ and $(ii)$ respectively.
From \eqref{u_sigma_approximation} and Lemma \ref{lemma_estimates_u_sigma_eta}, we have
\begin{align}
\label{H_minus_1_estimate_u_sigma_eta}
\| \partial_t  u^{\sigma}_{\eta}  \|_{L^2(0,T; H^{-1}(\R^d))} \leq C.
\end{align}
Estimates from Lemma \ref{lemma_estimates_u_sigma_eta} $(iii)$ and \eqref{H_minus_1_estimate_u_sigma_eta} allow us to use Aubin-Lions Lemma to obtain \eqref{ii}. Since it is not possible to apply Aubin-Lions Lemma directly, one needs to consider $(u_\eta^\sigma)_\eta$ not on the whole space but on the sequence of growing $d$-dimensional balls and then use diagonal argument.

From Lemma \ref{lemma_estimates_u_sigma_eta} $(i)$ and $(iii)$, we deduce that
\begin{align*}
\| \nabla  ( u^{\sigma}_{\eta} + \eta)^m  \|_{L^2( 0,T; L^2(\R^d) ) } &\leq m \|  ( u^{\sigma}_{\eta} + \eta)^{m-1}  \|_{L^{\infty}(0,T; L^{\infty} (\R^d) ) }\| \nabla   u^{\sigma}_{\eta}  \|_{L^2(  0,T; L^2(\R^d) ) } \leq C.
\end{align*}
By this estimate together with \eqref{ii}, we derive that for any $\psi \in C_0^{\infty}((0,T) \times \R^d )$
\begin{align*}
\int_0^T \int_{\R^d} \nabla ( u^{\sigma}_{\eta} + \eta)^m \psi \,dx \,  dt &= -m \int_0^T \int_{\R^d} ( u^{\sigma}_{\eta} + \eta)^m \nabla \psi \, dx \, dt\\
&\rightarrow -m \int_0^T \int_{\R^d}  (u^{\sigma} )^m  \nabla \psi \, dx \, dt \\
&= m\int_0^T \int_{\R^d} \nabla (u^{\sigma})^m \psi \,dx \,  dt,\;\;\;\; {\rm{as}}\;\;\eta\rightarrow 0,
\end{align*}
which implies \eqref{iii}.
\end{proof}
Lemma \ref{leta} makes it possible to take limit $\eta \rightarrow 0$ in the weak formulation of \eqref{u_sigma_approximation} to construct
the weak solution $u^\sigma$ to the problem \eqref{generalized_equation_u_sigma}. Furthermore, by Moser's iteration, we can obtain that
there exists a positive constant $\hat{C}$ which depends on $d$, $m$ and $\| u_0 \|_{L^1(\R^d) \cap L^{\infty}(\R^d) }$ such that
\begin{align*}
\| u^{\sigma}  \|_{L^{\infty}(0,T; L^{\infty}(\R^d))} \leq \hat{C}.
\end{align*}
Notice that the constant $\hat C$ does not depend on $\sigma$.

\subsection{Regularity and uniqueness of the Solution $u^\sigma$ }\label{reg}
Our aim of this subsection is to prove regularity of the solution $u^\sigma$, which implies immediately the uniqueness by classical theory.
\begin{proposition}
Let the assumptions in Theorem \ref{theorem_u_sigma} hold
and $u^{\sigma}$ be a weak solution to the problem \eqref{generalized_equation_u_sigma}, then $u^{\sigma} \in W_q^{3,1}((0,T) \times  \R^d)$ for any $q\in(1,\infty)$.
\end{proposition}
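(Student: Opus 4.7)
The plan is to recast \eqref{generalized_equation_u_sigma} as a \emph{linear} variable-coefficient parabolic equation for $u^\sigma$ and then use $L^q$ maximal parabolic regularity followed by a single bootstrap step, exactly as the authors foreshadow with their reference to Theorem~9.2.2 of \citep{wu2006elliptic}.

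\textbf{Step 1: Quasilinear rewriting.} Since $p(u)=\frac{m}{m-1}u^{m-1}$ gives $u\nabla p(u)=\nabla(u^m)$ and the Poisson equation supplies $-\Delta c^\sigma=u^\sigma$, the first equation of \eqref{generalized_equation_u_sigma} can be expanded to
$$\partial_t u^\sigma-a(x,t)\,\Delta u^\sigma=F(x,t),\qquad a:=\sigma+m(u^\sigma)^{m-1},$$
with
$$F=m(m-1)(u^\sigma)^{m-2}|\nabla u^\sigma|^2-\nabla u^\sigma\cdot\nabla c^\sigma+(u^\sigma)^2.$$

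\textbf{Step 2: $W^{2,1}_q$ regularity.} The Moser $L^\infty$ bound and the Bernstein estimate obtained in Section~\ref{Section_estimate_grad_u_sigma_eta}, inherited by the limit $u^\sigma$ through Lemma~\ref{lemma_estimates_u_sigma_eta}, show that $a$ is bounded below by $\sigma>0$, bounded above, and lies in $L^\infty(0,T;W^{1,\infty}(\R^d))$; in particular $a$ is uniformly continuous in $(t,x)$ and uniformly parabolic. The source $F$ is controlled in $L^q((0,T)\times\R^d)$ for every $q\in(1,\infty)$ by combining the uniform bounds on $u^\sigma$ and $\nabla u^\sigma$ with the $L^1\cap L^\infty$ mass control that propagates from $u_0^\sigma$, while the drift term $\nabla u^\sigma\cdot\nabla c^\sigma$ is handled via Hardy--Littlewood--Sobolev and elliptic regularity applied to $c^\sigma$. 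The classical $L^q$ maximal regularity theorem for linear parabolic equations with continuous leading coefficient then yields $u^\sigma\in W^{2,1}_q((0,T)\times\R^d)$.

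\textbf{Step 3: Bootstrap to $W^{3,1}_q$.} Differentiating the equation in $x_k$ shows that $v:=\partial_{x_k}u^\sigma$ solves
$$\partial_t v-a\,\Delta v=(\partial_{x_k}a)\,\Delta u^\sigma+\partial_{x_k}F.$$
After Step~2 one has $\Delta u^\sigma\in L^q$ and, via Calder\'on--Zygmund applied to $-\Delta c^\sigma=u^\sigma$, also $D^2 c^\sigma\in L^q$; every other factor appearing in $\partial_{x_k}F$ is a product of an $L^\infty$ term (from the Bernstein bound) with a second-order spatial derivative that was placed in $L^q$ in Step~2. A second application of maximal $L^q$ regularity then gives $v\in W^{2,1}_q$, which is exactly $u^\sigma\in W^{3,1}_q((0,T)\times\R^d)$.

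\textbf{Main obstacle.} The conceptual steps are straightforward, but the genuine technical difficulty is guaranteeing that every source term really lies in $L^q((0,T)\times\R^d)$ for \emph{all} $q\in(1,\infty)$, not just locally. This is delicate for the Riesz-type quantities $\nabla^j c^\sigma$ (whose behaviour at infinity is dictated by the slow decay of the Newtonian potential) and for the nonlinear term $(u^\sigma)^{m-2}|\nabla u^\sigma|^2$; both are treated by interpolating between the $L^\infty$ pointwise estimates coming from the Bernstein argument and the $L^1$ mass control propagated through the approximation. Once this global integrability is in hand, the linear parabolic machinery delivers $W^{3,1}_q$ regularity essentially automatically, and uniqueness follows from the standard energy estimate for the difference of two such regular solutions.
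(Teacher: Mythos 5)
Your proposal takes essentially the same route as the paper: both rewrite \eqref{generalized_equation_u_sigma} as a linear non-divergence parabolic equation $\partial_t u^\sigma - (\sigma + m(u^\sigma)^{m-1})\Delta u^\sigma = F$, invoke $L^q$ maximal parabolic regularity (the paper cites Theorem~9.2.2 of \citep{wu2006elliptic}, as you anticipate) to obtain $u^\sigma\in W^{2,1}_q$, and then differentiate once in $x_j$ and apply the same theorem to $\partial_{x_j}u^\sigma$ to reach $W^{3,1}_q$. The only minor difference is that the paper first records the intermediate facts $u^\sigma\in L^\infty(0,T;H^1)\cap L^2(0,T;H^2)$ and $\Phi*u^\sigma\in L^\infty(0,T;W^{2,q})$ before invoking the regularity theorem, whereas you absorb these into a discussion of why $F\in L^q$; the substance of the argument is the same.
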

\begin{proof}
So far we have proved that $u^{\sigma} \in L^{\infty} ( 0,T; W^{1, \infty} (\R^d)  ) \cap L^{\infty}(0,T; L^1(\R^d ))$.
Using the same arguments as in subsection \ref{global_existence_approximate}, we get
\begin{align*}
u^{\sigma} \in L^{\infty} ( 0,T; H^1 (\R^d)  ) \cap L^2(0,T; H^2(\R^d )).
\end{align*}
Applying  theory of the elliptic partial differential equation, we deduce that
\begin{align*}
\Phi * u^{\sigma} \in L^{\infty} (0,T; W^{2,q}(\R^d)) \ \text{for all}  \ q \in (1, \infty).
\end{align*}
Notice that
\begin{align}
\label{reg_u_sigma_2}
\partial_t u^{\sigma} - (\sigma + m (u^{\sigma})^{m-1})\Delta u^{\sigma} - m(m-1)(u^{\sigma})^{m-2} \nabla u^{\sigma} \cdot \nabla u^{\sigma} + \nabla \Phi * u^{\sigma} \cdot \nabla u^{\sigma} - u^{\sigma} u^{\sigma} =0.
\end{align}
Equation \eqref{reg_u_sigma_2} and the fact  that $u^{\sigma} \in L^{\infty}(0,T; W^{1,\infty}(\R^d))$ allow us to use
Theorem 9.2.2 of \citep{wu2006elliptic} to obtain that
\begin{align*}
u^{\sigma} \in W_q^{2,1} ((0,T) \times \R^d)\ \text{for any} \ q\in(1,\infty).
\end{align*}
Let $\partial_{x_j} u^\sigma=:v^\sigma$, then it holds
\begin{align*}
&\partial_t v^{\sigma} - (\sigma + m (u^{\sigma})^{m-1})\Delta v^{\sigma} - m(m-1)(u^{\sigma})^{m-2} \Delta u^{\sigma} \partial_{x_j} u^{\sigma}\\
& - m(m-1)(m-2)(u^{\sigma})^{m-3} |\nabla u^{\sigma} |^2\partial_{x_j} u^{\sigma} - 2m(m-1)(u^{\sigma})^{m-2} \nabla u^{\sigma} \cdot \nabla \partial_{x_j} u^{\sigma} \\
&+ \nabla \Phi * u^{\sigma} \cdot \nabla \partial_{x_j} u^{\sigma} + \nabla u^{\sigma} \cdot \nabla  \Phi *\partial_{x_j} u^{\sigma} - 2  u^{\sigma} \partial_{x_j} u^{\sigma} =0.
\end{align*}
By means of Theorem 9.2.2 of \citep{wu2006elliptic}, we deduce that
\begin{align*}
\partial_{x_j} u^{\sigma}=v^\sigma \in W_q^{2,1} ((0,T) \times \R^d)\ \text{for any} \ q\in(1,\infty).
\end{align*}
Therefore,
\begin{align*}
\nabla  u^{\sigma} \in W_q^{2,1} ((0,T) \times \R^d)\ \text{for any} \ q\in(1,\infty).
\end{align*}
It is obvious that
\begin{align*}
u^{\sigma} \in W_q^{3,1}((0,T) \times  \R^d) \ \text{for any} \  q\in(1,\infty).
\end{align*}
With this regularity, it is standard to obtain that the solution of \eqref{generalized_equation_u_sigma} is unique.

\end{proof}

\section{Well-posedness of non-local problem \eqref{generalized_equation_u_epsilon_sigma}}\label{section_u_epsilon_sigma}
The non-local terms in problem \eqref{generalized_equation_u_epsilon_sigma} appear not only in the aggregation term but also in diffusion, therefore there is no general procedure how to obtain the well-posedness for such non-local problems  without assuming  small initial data. Hence, in this section we use a perturbation method  to prove Theorem \ref{theorem_u_epsilon_sigma} for small $\varepsilon_k$ and $\varepsilon_p$ and so avoid the above restriction for the initial data. We study the dynamics of $u^{\varepsilon,\sigma}-u^\sigma$ and present the $H^s$ energy estimate for this term.
The letter $C$ in this section denotes a positive constant which depends on $\sigma$, $T$ and $s$.

\subsection{Local well-posedness of perturbation equations}
First of all, we prove the following lemma which will be used later.
\begin{lemma}
	\label{lemma_non_loc}
	Let $\varepsilon>0$, $q\in [1,\infty)$, $f\in L^q(\R^d)$ and $g\in W^{1, \infty}(\R^d)$. Then the following inequality holds
	\begin{align*}
	\| V^{\varepsilon}*(fg)-(V^{\varepsilon}*f)g \|_{L^q(\R^d)}^q \leq C \varepsilon^q \| \nabla g \|_{L^{\infty}(\R^d)}^q \| f \|_{L^q(\R^d)}^q.
	\end{align*}
\end{lemma}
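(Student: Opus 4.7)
The statement is a classical commutator estimate for a convolution with a compactly supported mollifier, and the proof should be short. Here is the plan.

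First I would rewrite the commutator pointwise. Since $V^{\varepsilon}$ is an ordinary convolution kernel, I can use the trick of absorbing $g(x)$ into the integral as $g(x) = \int V^{\varepsilon}(x-y)g(x)\,dy \cdot (\int V^{\varepsilon})^{-1}$; here it is even cleaner because $\int V^{\varepsilon} = 1$, so
\begin{equation*}
V^{\varepsilon}*(fg)(x)-(V^{\varepsilon}*f)(x)\,g(x) = \int_{\R^d} V^{\varepsilon}(x-y)\,f(y)\,\bigl(g(y)-g(x)\bigr)\,dy.
\end{equation*}

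Next I would exploit the two defining features of $V^{\varepsilon}$. Since $V\in C_0^{\infty}(\R^d)$, there is $R>0$ with $\operatorname{supp} V\subset B_R(0)$, so $V^{\varepsilon}(x-y)=\varepsilon^{-d}V((x-y)/\varepsilon)$ vanishes unless $|x-y|\leq R\varepsilon$. Combined with the Lipschitz bound $|g(y)-g(x)|\leq \|\nabla g\|_{L^{\infty}(\R^d)}|x-y|$, this gives the pointwise estimate
\begin{equation*}
\bigl|V^{\varepsilon}*(fg)(x)-(V^{\varepsilon}*f)(x)\,g(x)\bigr| \leq R\,\varepsilon\,\|\nabla g\|_{L^{\infty}(\R^d)}\,(V^{\varepsilon}*|f|)(x).
\end{equation*}

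Finally I would take $L^q$ norms in $x$ and apply Young's convolution inequality, using $\|V^{\varepsilon}\|_{L^1(\R^d)}=\|V\|_{L^1(\R^d)}=1$ so that $\|V^{\varepsilon}*|f|\|_{L^q(\R^d)}\leq \|f\|_{L^q(\R^d)}$. Raising to the $q$-th power yields the claim with $C=R^q$. There is no real obstacle: the only delicate point is ensuring that the mollifier is compactly supported so that $|x-y|$ on its support is controlled by $\varepsilon$, which is built into the standing assumption $V\in C_0^{\infty}(\R^d)$ made in the introduction.
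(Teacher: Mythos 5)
Your proof is correct and, in substance, the same as the paper's: both start from the commutator identity using $\int V^{\varepsilon}=1$, both rely on the compact support of $V$ together with the Lipschitz bound on $g$, and both close with $\|V^{\varepsilon}\|_{L^1}=1$. The paper reaches the estimate by applying H\"older (Jensen) to peel off $V^{\varepsilon}$, then changing variables $z=x-y$ and bounding the difference quotient; you instead establish the pointwise bound $|\,\cdot\,|\leq R\varepsilon\|\nabla g\|_{L^\infty}(V^{\varepsilon}*|f|)$ directly and invoke Young's inequality. These are two packagings of the same argument, and your route is slightly cleaner.
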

\begin{proof}
Simple estimates by using H\"older's inequality show that
\begin{align*}
&\| V^{\varepsilon}*(fg)-(V^{\varepsilon}*f)g \|_{L^q(\R^d)}^q\\
=&  \int_{\R^d} \Big| \int_{\R^d} V^{\varepsilon}(x-y)^{\frac{q-1}{q}} V^{\varepsilon}(x-y)^{\frac{1}{q}}
\big( g(y) - g(x) \big) f(y) \, dy  \Big|^q \, dx  \\
\leq&  \int_{\R^d} \int_{\R^d} V^{\varepsilon}(x-y)  |f(y)|^q  | g(x) - g(y) |^q  \, dy \, dx  \\
=&  \int_{\R^d} \int_{\R^d} V^{\varepsilon}(z) |z|^q |f(y)|^q  \frac{| g(y+z) - g(y) |^q}{|z|^q}  \, dy \, dz \\
\leq & \|\nabla g \|_{L^{\infty}(\R^d)}^q  \|f\|_{L^q(\R^d)}^q \frac{1}{\varepsilon^d}  \int_{\R^d} V(\frac{z}{\varepsilon}) |z|^q \, dz  \\
\leq& C \varepsilon^q   \|\nabla g \|_{L^{\infty}(\R^d)}^q  \|f\|_{L^q(\R^d)}^q,
\end{align*}
where we have use the fact that $V$ is compactly supported.
\end{proof}

Under the assumptions of Theorem \ref{theorem_u_epsilon_sigma}, the solution $u^\sigma$ satisfies
$u^\sigma\in L^\infty(0,T;H^{s+2}(\R^d))$, $s> \frac{d}{2} +2$.
Define a set
\begin{align*}
Y:= \{ \mathbf{v}  \in \ &L^{\infty}(0,T; H^s(\R^d))\cap L^2(0,T; H^{s+1}(\R^d)) :\\
 &\| \mathbf{v} \|_{L^{\infty}(0,T; H^s(\R^d))}^2 +\| \nabla \mathbf{v}  \|_{L^2(0,T;H^s(\R^d))}^2 \leq \varepsilon_k +\varepsilon_p \ \text{and} \ \mathbf{v}(0,x)=0\}.
\end{align*}
For any $\mathbf{v} \in Y$,  we consider the following linear partial differential equation
\begin{align}
\label{lin_problem_non_loc}
\partial_t \mathbf{u} - \sigma \Delta \mathbf{u}  =& \nabla \cdot \Big( (\mathbf{u} +u^{\sigma})\nabla (p_{\lambda}-p)(V^{\varepsilon_p}*(\mathbf{v} +u^{\sigma})) + \mathbf{u} \nabla p ( V^{\varepsilon_p}*(\mathbf{v} +u^{\sigma}) ) \\
& + u^{\sigma} \nabla \int_0^1 p' ( V^{\varepsilon_p}*u^{\sigma} +s V^{\varepsilon_p}*\mathbf{v} ) V^{\varepsilon_p}*\mathbf{u} \, ds  +u^{\sigma}\big(\nabla p ( V^{\varepsilon_p}*u^{\sigma}) - \nabla p(u^{\sigma})\big) \Big) \notag \\
&- \nabla \cdot \Big(  \mathbf{u} \nabla \Phi * V^{\varepsilon_k}*\mathbf{v} + \mathbf{u} \nabla \Phi *V^{\varepsilon_k}*u^{\sigma} +u^{\sigma}\nabla \Phi*V^{\varepsilon_k}*(\mathbf{u} +u^{\sigma}) - u^{\sigma}\nabla \Phi * u^{\sigma}  \Big), \notag\\
\mathbf{u}(0,x)=0.\;\;& \notag
\end{align}
The linear problem \eqref{lin_problem_non_loc} possesses  a unique global solution $\mathbf{u} \in L^{\infty}(0,T; H^s(\R^d))\cap L^2(0,T; H^{s+1}(\R^d))$.
Let $\alpha$ be a multi-index of order $|\alpha|\leq s$. We apply $D^{\alpha}$ to \eqref{lin_problem_non_loc}, multiply the resulting equation by $D^{\alpha} \mathbf{u} $ and integrate over $\R^d$ to obtain that
\begin{align*}
\frac{1}{2} \frac{d}{dt} \int_{\R^d} |D^{\alpha} \mathbf{u} |^2 dx + \sigma \int_{\R^d} |\nabla D^{\alpha} \mathbf{u} |^2 dx =& - \int_{\R^d} D^{\alpha} \big( (\mathbf{u} +u^{\sigma})\nabla (p_{\lambda}-p)(V^{\varepsilon_p}*(\mathbf{v} +u^{\sigma})) \big) \cdot \nabla D^{\alpha} \mathbf{u} \, dx \\
& - \int_{\R^d} D^{\alpha} \big( \mathbf{u} \nabla p ( V^{\varepsilon_p}*(\mathbf{v} +u^{\sigma}) ) \big) \cdot \nabla D^{\alpha} \mathbf{u} \, dx \\
& - \int_{\R^d} D^{\alpha} \big( u^{\sigma} \nabla \int_0^1 p' ( V^{\varepsilon_p}*u^{\sigma} +z V^{\varepsilon_p}*\mathbf{v} ) V^{\varepsilon_p}*\mathbf{u} \, dz  \big) \cdot \nabla D^{\alpha} \mathbf{u} \, dx \\
& - \int_{\R^d} D^{\alpha} \big( u^{\sigma} (\nabla p ( V^{\varepsilon_p}*u^{\sigma}) - \nabla p(u^{\sigma})) \big) \cdot \nabla D^{\alpha} \mathbf{u} \, dx \\
&+ \int_{\R^d} D^{\alpha} \big(   \mathbf{u} \nabla \Phi * V^{\varepsilon_k}*\mathbf{v}   \big) \cdot \nabla D^{\alpha} \mathbf{u} \, dx \\
&+ \int_{\R^d} D^{\alpha} \big(  \mathbf{u} \nabla \Phi *V^{\varepsilon_k}*u^{\sigma}  \big) \cdot \nabla D^{\alpha} \mathbf{u} \, dx \\
&+ \int_{\R^d} D^{\alpha} \big(  u^{\sigma}\nabla \Phi*V^{\varepsilon_k}*(\mathbf{u} +u^{\sigma}) - u^{\sigma}\nabla \Phi * u^{\sigma} )  \big) \cdot \nabla D^{\alpha} \mathbf{u} \, dx \\
=:& I_1 + I_2 + I_3 + I_4 + I_5 + I_6 +I_7.
\end{align*}
Since $\|\mathbf{v} \|_{L^{\infty}(0,T; L^{\infty}(\R^d))}$ and $\|u^{\sigma} \|_{L^{\infty}(0,T; L^{\infty}(\R^d))}$ are bounded, we can take $\lambda$ small enough such that
\begin{align*}
\|\mathbf{v} \|_{L^{\infty}(0,T; L^{\infty}(\R^d))} \leq \frac{1}{2\lambda}, \qquad
\|u^{\sigma} \|_{L^{\infty}(0,T; L^{\infty}(\R^d))} \leq \frac{1}{2\lambda}.
\end{align*}
It follows that $I_1=0$.
For $I_2$,
\begin{align*}
I_2\leq& \frac{\sigma}{12} \| \nabla D^{\alpha} \mathbf{u} \|_{L^2(\R^d)}^2 + C \|  \mathbf{u} \|_{L^{\infty}(\R^d)}^2 \| D^{\alpha}  \nabla p(V^{\varepsilon_p}*(\mathbf{v} + u^{\sigma}))  \|_{L^2(\R^d)}^2\\
& + C \|   D^{\alpha} \mathbf{u}  \|_{L^2(\R^d)}^2 \| \nabla p(V^{\varepsilon_p}*(\mathbf{v} + u^{\sigma}))   \|_{L^{\infty}(\R^d)}^2\\
\leq& \frac{\sigma}{12} \| \nabla D^{\alpha} \mathbf{u} \|_{L^2(\R^d)}^2  + C \| \mathbf{u} \|_{H^s(\R^d)}^2 \| D^{\alpha}  \nabla p(V^{\varepsilon_p}*(\mathbf{v} + u^{\sigma}))   \|_{L^2(\R^d)}^2 \\
& + C  \| \mathbf{u} \|_{H^s(\R^d)}^2 \|  p(V^{\varepsilon_p}*(\mathbf{v} + u^{\sigma}))   \|_{H^s(\R^d)}^2\\
\leq& \frac{\sigma}{12}  \| \nabla D^{\alpha} \mathbf{u} \|_{L^2(\R^d)}^2
 + C \| \mathbf{u} \|_{H^s(\R^d)}^2 \| \nabla \mathbf{v} \|_{H^s(\R^d)}^2
+ C \| \mathbf{u} \|_{H^s(\R^d)}^2.
\end{align*}
For $I_3$,
\begin{align*}
I_3
=& - \int_{\R^d} \int_0^1 u^{\sigma} \nabla D^{\alpha}\left( p' ( V^{\varepsilon_p}*u^{\sigma} +z V^{\varepsilon_p}*\mathbf{v} ) V^{\varepsilon_p}*\mathbf{u}  \right) \cdot \nabla D^{\alpha} \mathbf{u}  \, dz \, dx \\
&- \int_{\R^d} \int_0^1 \big( D^{\alpha} \big( u^{\sigma} \nabla (  p' ( V^{\varepsilon_p}*u^{\sigma} +z V^{\varepsilon_p}*\mathbf{v} ) V^{\varepsilon_p}*\mathbf{u} ) \big)
- u^{\sigma} \nabla D^{\alpha} \big(  p' ( V^{\varepsilon_p}*u^{\sigma} +z V^{\varepsilon_p}*\mathbf{v} ) V^{\varepsilon_p}*\mathbf{u}  \big)  \big)  \cdot \nabla D^{\alpha} \mathbf{u} \, dz \, dx \\
=:&A+B.
\end{align*}
First we deal with $B$,
\begin{align*}
B \leq& \frac{\sigma}{36} \| \nabla D^{\alpha} \mathbf{u}  \|_{L^2(\R^d)}^2\\
&+ C \sup_{0<z<1}  \| D^{\alpha} \big( u^{\sigma} \nabla(  p' ( V^{\varepsilon_p}*u^{\sigma} +z V^{\varepsilon_p}*\mathbf{v}  )  V^{\varepsilon_p}*\mathbf{u} )  \big)   - u^{\sigma} D^{\alpha} \nabla \big( p'(V^{\varepsilon_p}*u^{\sigma} + zV^{\varepsilon_p}*\mathbf{v} ) \big)  \|_{L^2(\R^d)}^2\\
\leq& \frac{\sigma}{36} \|  \nabla D^{\alpha} \mathbf{u}   \|_{L^2(\R^d)}^2 + C \| \nabla  u^{\sigma} \|_{L^{\infty}(\R^d)}^2 \sup_{0<z<1} \| D^{\alpha-1} \nabla \big(  p' ( V^{\varepsilon_p}*u^{\sigma} +z V^{\varepsilon_p}*\mathbf{v}  )  V^{\varepsilon_p}*\mathbf{u} \big)  \|_{L^2(\R^d)}^2 \\
&+ C  \|   D^{\alpha} u^\sigma   \|_{L^2(\R^d)}^2  \sup_{0<z<1} \| \nabla \big(  p' ( V^{\varepsilon_p}*u^{\sigma} +z V^{\varepsilon_p}*\mathbf{v}  )  V^{\varepsilon_p}*\mathbf{u} \big)  \|_{L^{\infty}(\R^d)}^2 \\
\leq& \frac{\sigma}{36} \| \nabla D^{\alpha} \mathbf{u}  \|_{L^2(\R^d)}^2 + C \| \mathbf{u} \|_{H^s(\R^d)}^2.
\end{align*}
The term $A$ can be rewritten as
\begin{align*}
A =& - \int_{\R^d} \int_0^1 u^{\sigma}  p' ( V^{\varepsilon_p}*u^{\sigma} +z V^{\varepsilon_p}*\mathbf{v} ) V^{\varepsilon_p}*(\nabla D^{\alpha} \mathbf{u})   \cdot \nabla D^{\alpha} \mathbf{u}  \, dz \, dx \\
&- \int_{\R^d} \int_0^1 u^{\sigma} \big( \nabla D^{\alpha} \big( p' ( V^{\varepsilon_p}*u^{\sigma} +z V^{\varepsilon_p}*\mathbf{v} ) V^{\varepsilon_p}*\mathbf{u}   \big)
- p'( V^{\varepsilon_p}*u^{\sigma} +z V^{\varepsilon_p}*\mathbf{v} ) \nabla D^{\alpha}  V^{\varepsilon_p}* \mathbf{u}  \big) \cdot \nabla D^{\alpha} \mathbf{u}  \, dz \, dx \\
=&: G+D.
\end{align*}
It is easy to see that
\begin{align*}
D \leq&  \frac{\sigma}{36} \| \nabla D^{\alpha} \mathbf{u}  \|_{L^2(\R^d)}^2 \\
&+ C \sup_{0<z<1} \Big\| u^{\sigma} \Big( \nabla D^{\alpha} \big(  p' ( V^{\varepsilon_p}*u^{\sigma} +z V^{\varepsilon_p}*\mathbf{v} ) V^{\varepsilon_p}*\mathbf{u} \big) - p' ( V^{\varepsilon_p}*u^{\sigma} +z V^{\varepsilon_p}*\mathbf{v} ) \nabla D^{\alpha} V^{\varepsilon_p}*\mathbf{u}  \Big) \Big\|_{L^2(\R^d)}^2 \\
\leq&  \frac{\sigma}{36} \| \nabla D^{\alpha} \mathbf{u}  \|_{L^2(\R^d)}^2 + C \| \nabla \big( p' ( V^{\varepsilon_p}*u^{\sigma} +V^{\varepsilon_p}*\mathbf{v} )  \big) \|_{L^{\infty}(\R^d)}^2 \| D^{\alpha} V^{\varepsilon_p}*\mathbf{u}  \|_{L^2(\R^d)}^2 \\
&+ C \| \nabla D^{\alpha} \big( p' ( V^{\varepsilon_p}*u^{\sigma} +V^{\varepsilon_p}*\mathbf{v} )  \big) \|_{L^2(\R^d)}^2 \| V^{\varepsilon_p}*\mathbf{u}  \|_{L^{\infty}(\R^d)}^2\\
\leq&  \frac{\sigma}{36} \| \nabla D^{\alpha} \mathbf{u}  \|_{L^2(\R^d)}^2 + C \| \mathbf{u}  \|_{H^s(\R^d)}^2
+C \| \mathbf{u} \|_{H^s(\R^d)}^2 \| \nabla \mathbf{v} \|_{H^s(\R^d)}^2.
\end{align*}
We deal with the term $G$ as follows
\begin{align*}
G=& - \int_{\R^d} \int_0^1 W^{\varepsilon_p} * \big( u^{\sigma} p'(V^{\varepsilon_p}*u^{\sigma} +z V^{\varepsilon_p} * \mathbf{v})\nabla D^{\alpha}\mathbf{u} \big) \cdot W^{\varepsilon_p} * \nabla D^{\alpha} \mathbf{u} \, dz \, dx \\
=&  - \int_{\R^d} \int_0^1 u^{\sigma} p'(V^{\varepsilon_p}*u^{\sigma} +z V^{\varepsilon_p} * \mathbf{v}) | W^{\varepsilon_p} * \nabla D^{\alpha} \mathbf{u}|^2 \, dz \, dx \\
&- \int_{\R^d} \int_0^1 \Big( W^{\varepsilon_p}* \big( u^{\sigma} p'(V^{\varepsilon_p}*u^{\sigma} +z V^{\varepsilon_p} * \mathbf{v}) \nabla D^{\alpha} \mathbf{u}  \big) - u^{\sigma}  W^{\varepsilon_p}* \big( p'(V^{\varepsilon_p}*u^{\sigma} +z V^{\varepsilon_p} * \mathbf{v}) \nabla D^{\alpha} \mathbf{u} \big) \Big) \cdot \nabla D^{\alpha} \mathbf{u}  \, dz \, dx \\
& - \int_{\R^d} \int_0^1 u^{\sigma} \Big( W^{\varepsilon_p}* \big(  p'(V^{\varepsilon_p}*u^{\sigma} +z V^{\varepsilon_p} * \mathbf{v}) \nabla D^{\alpha} \mathbf{u}  \big) -  p'(V^{\varepsilon_p}*u^{\sigma} +z V^{\varepsilon_p} * \mathbf{v}) W^{\varepsilon_p}* \nabla D^{\alpha} \mathbf{u} \Big) \cdot  \nabla D^{\alpha} \mathbf{u} \, dz \, dx \\
\leq& 0+E+F.
\end{align*}
With the help of Lemma \ref{lemma_non_loc}, we have
\begin{align*}
E &\leq \sup_{0<z<1} \big\|  W^{\varepsilon_p}* \big( u^{\sigma} p'(V^{\varepsilon_p}*u^{\sigma} +z V^{\varepsilon_p} * \mathbf{v}) \nabla D^{\alpha} \mathbf{u}  \big) - u^{\sigma}  W^{\varepsilon_p}* \big( p'(V^{\varepsilon_p}*u^{\sigma} +z V^{\varepsilon_p} * \mathbf{v}) \nabla D^{\alpha} \mathbf{u} \big)   \big\|_{L^2(\R^d)} \big\| \nabla D^{\alpha} \mathbf{u}  \big\|_{L^2(\R^d)} \\
&\leq C \varepsilon_p \| \nabla u^{\sigma}  \|_{L^{\infty}(\R^d)} \sup_{0<z<1} \| p'(V^{\varepsilon_p}*u^{\sigma} +z V^{\varepsilon_p} * \mathbf{v}) \nabla D^{\alpha} \mathbf{u}   \|_{L^2(\R^d)} \| \nabla D^{\alpha} \mathbf{u}  \|_{L^2(\R^d)}\\
&\leq C \varepsilon_p  \sup_{0<z<1}  \| p'(V^{\varepsilon_p}*u^{\sigma} +z V^{\varepsilon_p} * \mathbf{v})    \|_{L^{\infty}(\R^d)}  \| \nabla D^{\alpha} \mathbf{u}  \|_{L^2(\R^d)}^2\\
&\leq  C \varepsilon_p  \| \nabla D^{\alpha} \mathbf{u}  \|_{L^2(\R^d)}^2.
\end{align*}
Similarly,
\begin{align*}
F &\leq  \sup_{0<z<1} \big\|  W^{\varepsilon_p}* \big(  p'(V^{\varepsilon_p}*u^{\sigma} +z V^{\varepsilon_p} * \mathbf{v}) \nabla D^{\alpha} \mathbf{u}  \big)
  -     p'(V^{\varepsilon_p}*u^{\sigma} +z V^{\varepsilon_p} * \mathbf{v}) W^{\varepsilon_p}* \nabla D^{\alpha} \mathbf{u}    \big\|_{L^2(\R^d)} \big\|  u^{\sigma}  \big\|_{L^{\infty}(\R^d)}  \big\| \nabla D^{\alpha} \mathbf{u}  \big\|_{L^2(\R^d)} \\
&\leq C \varepsilon_p  \sup_{0<z<1}  \|\nabla p'(V^{\varepsilon_p}*u^{\sigma} +z V^{\varepsilon_p} * \mathbf{v})    \|_{L^{\infty}(\R^d)}  \| \nabla D^{\alpha} \mathbf{u}  \|_{L^2(\R^d)}^2\\
&\leq  C \varepsilon_p  \| \nabla D^{\alpha} \mathbf{u}  \|_{L^2(\R^d)}^2 .
\end{align*}
Combining these estimates, we obtain that
\begin{align*}
I_3 \leq \frac{\sigma}{18} \| \nabla D^{\alpha} \mathbf{u}  \|_{L^2(\R^d)}^2 +  C \| \mathbf{u}   \|_{H^s(\R^d)}^2
+C \| \mathbf{u}  \|_{H^s(\R^d)}^2 \| \nabla \mathbf{v}  \|_{H^s(\R^d)}^2
+ C \varepsilon_p \|\nabla D^\alpha \mathbf{u} \|_{L^2(\R^d)}^2.
\end{align*}
We take $\varepsilon_p$ small enough such that $C \varepsilon_p \leq \frac{\sigma}{36}$ to get
\begin{align*}
I_3 \leq \frac{\sigma}{12} \| \nabla D^{\alpha} \mathbf{u}  \|_{L^2(\R^d)}^2
+  C \| \mathbf{u}   \|_{H^s(\R^d)}^2
+C \| \mathbf{u}  \|_{H^s(\R^d)}^2 \| \nabla \mathbf{v}  \|_{H^s(\R^d)}^2 .
\end{align*}
Next, we handle $I_4$ using Lemma \ref{lemma_non_loc}.
\begin{align*}
I_4
\leq& \frac{\sigma}{12} \|  \nabla D^{\alpha} \mathbf{u}  \|_{L^2(\R^d)}^2 + C \|D^{\alpha} u^{\sigma} \|_{L^2(\R^d)}^2 \| \nabla p ( V^{\varepsilon_p}*u^{\sigma}) - \nabla p(u^{\sigma})  \|_{L^{\infty}(\R^d)}^2 \\
&+ C \| u^{\sigma} \|_{L^{\infty}(\R^d)}^2 \|   D^{\alpha} \nabla \big( p ( V^{\varepsilon_p}*u^{\sigma}) -  p(u^{\sigma})  \big)  \|_{L^2(\R^d)}^2\\
\leq&   \frac{\sigma}{12} \|  \nabla D^{\alpha} \mathbf{u}  \|_{L^2(\R^d)}^2  + C \Big\| \nabla \int_0^1 p'(zV^{\varepsilon_p}*u^{\sigma} +(1-z)u^{\sigma})(V^{\varepsilon_p} *u^{\sigma} - u^{\sigma}  ) \, dz  \Big\|_{H^s (\R^d)}^2 \\
\leq&  \frac{\sigma}{12} \|  \nabla D^{\alpha} \mathbf{u}  \|_{L^2(\R^d)}^2 +C \Big\| \int_0^1 p'(z V^{\varepsilon_p}*u^{\sigma} +(1-z)u^{\sigma})\nabla(V^{\varepsilon_p} *u^{\sigma} - u^{\sigma}  ) \, dz  \Big\|_{H^s (\R^d)}^2 \\
&+ C \Big\| \int_0^1 p''(z V^{\varepsilon_p}*u^{\sigma} +(1-z)u^{\sigma}) \nabla \big(zV^{\varepsilon_p}*u^{\sigma} +(1-z)u^{\sigma} \big) (V^{\varepsilon_p} *u^{\sigma} - u^{\sigma}  ) \, dz  \Big\|_{H^s (\R^d)}^2 \\
\leq& \frac{\sigma}{12} \|  \nabla D^{\alpha} \mathbf{u}  \|_{L^2(\R^d)}^2 + C \varepsilon_p^2.
\end{align*}
For $I_5$ we get the following estimate
\begin{align*}
I_5
&\leq \frac{\sigma}{12} \|  \nabla D^{\alpha} \mathbf{u}  \|_{L^2(\R^d)}^2 + C \| D^{\alpha} \mathbf{u}  \|_{L^2(\R^d)}^2 \| \nabla \Phi * V^{\varepsilon_k}*\mathbf{v}  \|_{L^{\infty}(\R^d)}^2 + \| \mathbf{u}   \|_{L^{\infty}(\R^d)}^2  \| D^{\alpha} \big( \nabla \Phi * V^{\varepsilon_k}*\mathbf{v}   \big)  \|_{L^2(\R^d)}^2  \\
&\leq \frac{\sigma}{12} \|  \nabla D^{\alpha} \mathbf{u}  \|_{L^2(\R^d)}^2 + C \| \mathbf{u} \|_{H^s(\R^d)}^2.
\end{align*}
Similarly, we obtain that
\begin{align*}
I_6 \leq \frac{\sigma}{12} \|  \nabla D^{\alpha} \mathbf{u}  \|_{L^2(\R^d)}^2 + C \| \mathbf{u} \|_{H^s(\R^d)}^2.
\end{align*}
We estimate $I_7$ as follows
\begin{align*}
I_7
\leq& \frac{\sigma}{12} \|  \nabla D^{\alpha} \mathbf{u}  \|_{L^2(\R^d)}^2 +  C  \|  D^{\alpha} \big(  u^{\sigma} ( \nabla \Phi*V^{\varepsilon_k}*(\mathbf{u} +u^{\sigma}) - \nabla \Phi * V^{\varepsilon_k} *  u^{\sigma} )  \big)     \|_{L^2(\R^d)}^2 \\
& + C\|  D^{\alpha} \big(  u^{\sigma} ( \nabla \Phi * V^{\varepsilon_k} *  u^{\sigma}  - \nabla \Phi * u^{\sigma} )  \big)     \|_{L^2(\R^d)}^2 \\
\leq& \frac{\sigma}{12} \|  \nabla D^{\alpha} \mathbf{u}  \|_{L^2(\R^d)}^2 + C \| D^{\alpha} u^{\sigma}  \|_{L^2(\R^d)}^2 \| \nabla \Phi*V^{\varepsilon_k}*\mathbf{u}  \|_{L^{\infty}(\R^d)}^2\\
&  + C \| u^{\sigma}  \|_{L^{\infty}(\R^d)}^2 \| D^{\alpha} ( \nabla \Phi*V^{\varepsilon_k}*\mathbf{u}  ) \|_{L^2(\R^d)}^2  + C \| D^{\alpha} u^{\sigma}  \|_{L^2(\R^d)}^2 \| V^{\varepsilon_k}*\nabla \Phi*u^{\sigma} - \nabla \Phi  *  u^{\sigma}   \|_{L^{\infty}(\R^d)}^2 \\
&+ C \| u^{\sigma}  \|_{L^{\infty}(\R^d)}^2 \|  D^{\alpha} \big(    V^{\varepsilon_k} * \nabla \Phi*  u^{\sigma}  - \nabla \Phi * u^{\sigma}   \big)     \|_{L^2(\R^d)}^2  \\
\leq&  \frac{\sigma}{12} \|  \nabla D^{\alpha} \mathbf{u}  \|_{L^2(\R^d)}^2 + C \| \mathbf{u} \|_{H^s(\R^d)}^2 + C \varepsilon_k^2.
\end{align*}
By estimates for $I_1$-$I_7$, we deduce that
\begin{align*}
\frac{1}{2} \frac{d}{dt} \| \mathbf{u} \|_{H^s(\R^d)}^2 + \frac{\sigma}{2} \| \nabla \mathbf{u} \|_{H^s(\R^d)}^2 \leq \| \mathbf{u} \|_{H^s(\R^d)}^2 + C \| \mathbf{u} \|_{H^s(\R^d)}^2 \| \nabla \mathbf{v} \|_{H^s(\R^d)}^2 + C (\varepsilon_k + \varepsilon_p)^2.
\end{align*}
Using Gr\"{o}nwall's inequality, we obtain that
\begin{align*}
\| \mathbf{u}(t) \|_{H^s(\R^d)}^2 \leq C  (\varepsilon_k + \varepsilon_p)^2 e^{C \int_0^T ( 1+ \| \nabla \mathbf{v} \|_{H^{s}(\R^d)}^2)  \, d\tilde{s} } \leq C (\varepsilon_k + \varepsilon_p)^2,\;\; \forall t \in [0,T].
\end{align*}
This implies that $u \in Y$. Hence, we can define an operator
\begin{align*}
\mathcal{T}: Y &\rightarrow Y\\
 \mathbf{v} &\mapsto  \mathbf{u},
\end{align*}
where $ \mathbf{u}$ solves \eqref{lin_problem_non_loc}. In order to apply Banach fixed-point theorem, it remains to prove that $\mathcal{T}$ is a contraction.

Let $ \mathbf{u}_1,  \mathbf{u}_2 \in Y$ be solutions of \eqref{lin_problem_non_loc} for $\mathbf{v}_1,\mathbf{v}_2\in Y$ respectively. Then $\mathbf{u}_1 -\mathbf{u}_2$ solves the following equation
\begin{align}
\label{non_loc_difference}
&\partial_t (\mathbf{u}_1 -\mathbf{u}_2) - \sigma \Delta (\mathbf{u}_1 -\mathbf{u}_2)\\
  &= \nabla \cdot \Big( (\mathbf{u}_1 +u^{\sigma})\nabla (p_{\lambda}-p)(V^{\varepsilon_p}*(\mathbf{v}_1 +u^{\sigma}))  -  (\mathbf{u}_2 +u^{\sigma})\nabla (p_{\lambda}-p)(V^{\varepsilon_p}*(\mathbf{v}_2 +u^{\sigma})) \notag \\
 & \;\;\; + \mathbf{u}_1 \nabla p ( V^{\varepsilon_p}*(\mathbf{v}_1 +u^{\sigma}) ) -  \mathbf{u}_2 \nabla p ( V^{\varepsilon_p}*(\mathbf{v}_2 +u^{\sigma}) ) \notag  \\
&\;\;\; + u^{\sigma} \nabla \int_0^1 p' ( V^{\varepsilon_p}*u^{\sigma} +z V^{\varepsilon_p}*\mathbf{v}_1 ) V^{\varepsilon_p}*\mathbf{u}_1 \, dz -  u^{\sigma} \nabla \int_0^1 p' ( V^{\varepsilon_p}*u^{\sigma} +z V^{\varepsilon_p}*\mathbf{v}_2 ) V^{\varepsilon_p}*\mathbf{u}_2 \, dz  \Big)  \notag \\
&\;\;\;- \nabla \cdot \Big(  \mathbf{u}_1 \nabla \Phi * V^{\varepsilon_k}*\mathbf{v}_1 - \mathbf{u}_2 \nabla \Phi * V^{\varepsilon_k}*\mathbf{u}_2  + ( \mathbf{u}_1 -\mathbf{u}_2 ) \nabla \Phi *V^{\varepsilon_k}*u^{\sigma} +u^{\sigma}\nabla \Phi*V^{\varepsilon_k}*( \mathbf{u}_1 -\mathbf{u}_2 )  \Big), \notag\\
&(\mathbf{u}_1-\mathbf{u}_2)(0,x)=0. \notag
\end{align}
\raggedbottom
Multiplying \eqref{non_loc_difference} by $\mathbf{u}_1 -\mathbf{u}_2$ and integrating it over $\R^d$, we deduce that
\begin{align*}
&\frac{1}{2} \frac{d}{dt} \int_{\R^d} |\mathbf{u}_1 -\mathbf{u}_2|^2 \, dx  + \sigma  \int_{\R^d} | \nabla  (\mathbf{u}_1 -\mathbf{u}_2) |^2 \, dx  \\
  =&\int_{\R^d} -  (\mathbf{u}_1 +u^{\sigma})\nabla (p_{\lambda}-p)(V^{\varepsilon_p}*(\mathbf{v}_1 +u^{\sigma})) \cdot \nabla (\mathbf{u}_1 -\mathbf{u}_2)   + (\mathbf{u}_2 +u^{\sigma})\nabla (p_{\lambda}-p)(V^{\varepsilon_p}*(\mathbf{v}_2 +u^{\sigma}))  \cdot \nabla (\mathbf{u}_1 -\mathbf{u}_2)  \, dx  \notag \\
 &- \int_{\R^d}   (\mathbf{u}_1 - \mathbf{u}_2 ) \nabla p ( V^{\varepsilon_p}*(\mathbf{v}_1 +u^{\sigma}) )   \cdot \nabla (\mathbf{u}_1 -\mathbf{u}_2)  \, dx    \notag  \\
   & - \int_{\R^d} \mathbf{u}_2 \big(  \nabla p ( V^{\varepsilon_p}*(\mathbf{v}_1 +u^{\sigma}) ) -   \nabla p ( V^{\varepsilon_p}*(\mathbf{v}_2 +u^{\sigma}) ) \big) \cdot \nabla (\mathbf{u}_1 -\mathbf{u}_2)     \, dx    \notag \\
   & - \int_{\R^d}  u^{\sigma} \nabla \int_0^1 p' ( V^{\varepsilon_p}*u^{\sigma} +z V^{\varepsilon_p}*\mathbf{v}_1 ) V^{\varepsilon_p}*( \mathbf{u}_1 - \mathbf{u}_2 ) \, dz   \cdot \nabla (\mathbf{u}_1 -\mathbf{u}_2)     \, dx    \notag \\
   &  - \int_{\R^d}   u^{\sigma} \nabla \int_0^1 \big(  p' ( V^{\varepsilon_p}*u^{\sigma} +z V^{\varepsilon_p}*\mathbf{v}_1 ) - p' ( V^{\varepsilon_p}*u^{\sigma} +z V^{\varepsilon_p}*\mathbf{v}_2 )  \big) V^{\varepsilon_p}* \mathbf{u}_2  \, dz   \cdot \nabla (\mathbf{u}_1 -\mathbf{u}_2)     \, dx  \notag \\
& + \int_{\R^d}   ( \mathbf{u}_1 - \mathbf{u}_2 ) \nabla \Phi * V^{\varepsilon_k}*\mathbf{v}_1 \nabla (\mathbf{u}_1 -\mathbf{u}_2)  \, dx
 + \int_{\R^d} \mathbf{u}_2 \nabla \Phi * V^{\varepsilon_k}*( \mathbf{v}_1 - \mathbf{v}_2 )  \cdot \nabla (\mathbf{u}_1 -\mathbf{u}_2)  \, dx \notag \\
& + \int_{\R^d}  ( \mathbf{u}_1 -\mathbf{u}_2 ) \nabla \Phi *V^{\varepsilon_k}*u^{\sigma} \cdot \nabla (\mathbf{u}_1 -\mathbf{u}_2)      \, dx
+ \int_{\R^d} u^{\sigma}\nabla \Phi*V^{\varepsilon_k}*( \mathbf{u}_1 -\mathbf{u}_2 ) \cdot \nabla (\mathbf{u}_1 -\mathbf{u}_2)       \, dx  \notag\\
=&: J_1 + J_2 + J_3 + J_4 + J_5 + J_6 + J_7 + J_8 + J_9.
\end{align*}

First, we can take $\lambda$ small enough such that
\begin{align*}
J_1 =0.
\end{align*}
Next, we deal with $J_2$.
\begin{align*}
J_2\leq \frac{\sigma}{16} \int_{\R^d} | \nabla (\mathbf{u}_1 - \mathbf{u}_2 ) |^2 \, dx  + \hat{C}  \int_{\R^d} | \mathbf{u}_1 - \mathbf{u}_2 |^2 \, dx,
\end{align*}
where $\hat{C}$ in this subsection denotes a positive constant which depends on $\sigma$, $s$ and $ \varepsilon_p$.
Similarly we obtain that $J_6$, $J_7$, $J_8$ and $J_9$ are bounded by
\begin{align*}
J_6+J_7+J_8+J_9\leq\frac{\sigma}{16} \int_{\R^d} | \nabla (\mathbf{u}_1 - \mathbf{u}_2 ) |^2 \, dx  + \hat{C}  \int_{\R^d} | \mathbf{u}_1 - \mathbf{u}_2 |^2 \, dx.
\end{align*}
For $J_3$,
\begin{align*}
J_3
\leq& \frac{\sigma}{16} \int_{\R^d} | \nabla (\mathbf{u}_1 - \mathbf{u}_2 ) |^2 \, dx + \hat{C} \| \mathbf{u}_2 \|_{L^{\infty}(\R^d)}^2  \|   \nabla p ( V^{\varepsilon_p}*(\mathbf{v}_2 +u^{\sigma}) ) -   \nabla p ( V^{\varepsilon_p}*(\mathbf{v}_1 +u^{\sigma}) )  \|_{L^2(\R^d)}^2\\
\leq& \frac{\sigma}{16} \int_{\R^d} | \nabla (\mathbf{u}_1 - \mathbf{u}_2 ) |^2 \, dx \\
&+ \hat{C}  \int_{\R^d}  \Big|  \int_0^1   p'' \big(z V^{\varepsilon_p}*(\mathbf{v}_2
+ u^{\sigma}) +(1-z) V^{\varepsilon_p}*(\mathbf{v}_1 + u^{\sigma})  \big)   \\
& \ \ \ \ \ \ \ \ \ \cdot  \nabla V^{\varepsilon_p}* \big( z \mathbf{v}_2 + (1-z)\mathbf{v}_1 + u^{\sigma}  \big) V^{\varepsilon_p}*(\mathbf{v}_2 - \mathbf{v}_1)    \Big|^2      \, dz   \, dx  \\
&+ \hat{C}  \int_{\R^d}   \int_0^1  \Big|   p' \big(z V^{\varepsilon_p}*(\mathbf{v}_2 + u^{\sigma}) +(1-z) V^{\varepsilon_p}*(\mathbf{v}_1 + u^{\sigma})  \big)   \nabla V^{\varepsilon_p}* ( \mathbf{v}_2 - \mathbf{v}_1  ) \Big|^2      \, dz    \, dx  \\
=&: \frac{\sigma}{16} \int_{\R^d} | \nabla (\mathbf{u}_1 - \mathbf{u}_2 ) |^2 \, dx  +J_{31} + J_{32}.
\end{align*}
By Young's convolution inequality, we derive that
\begin{align*}
J_{31} &\leq \hat{C} \sup_{0< z < 1} \| \nabla V^{\varepsilon_p}* \big( z \mathbf{v}_2 + (1-z)\mathbf{v}_1 + u^{\sigma}  \big) V^{\varepsilon_p}*(\mathbf{v}_2 - \mathbf{v}_1)     \|_{L^2(\R^d)}^2 \\
&\leq \hat{C} \sup_{0< z < 1} \| \nabla V^{\varepsilon_p}* \big( z \mathbf{v}_2 + (1-z)\mathbf{v}_1 + u^{\sigma}  \big)  \|_{L^{\infty}(\R^d)}^2 \| \mathbf{v}_2 - \mathbf{v}_1  \|_{L^2(\R^d)}^2 \\
&\leq \hat{C} \| \mathbf{v}_2 - \mathbf{v}_1  \|_{L^2(\R^d)}^2.
\end{align*}
 For $J_{32}$ we deduce that
 \begin{align*}
 J_{32} & \leq \hat{C}  \| \nabla  V^{\varepsilon_p} * ( \mathbf{v}_1 - \mathbf{v}_2 ) \|_{L^2(\R^d)}^2 \leq \hat{C} \| \nabla  V^{\varepsilon_p}  \|_{L^1(\R^d)}^2  \| \mathbf{v}_2 - \mathbf{v}_1  \|_{L^2(\R^d)}^2 \leq \hat{C} \| \mathbf{v}_2 - \mathbf{v}_1  \|_{L^2(\R^d)}^2.
\end{align*}
This implies that
\begin{align*}
J_3&\leq \frac{\sigma}{16} \int_{\R^d} | \nabla (\mathbf{u}_1 - \mathbf{u}_2 ) |^2 \, dx  + \hat{C} \| \mathbf{v}_2 - \mathbf{v}_1  \|_{L^2(\R^d)}^2.
\end{align*}
We deal with  $J_4$ as follows
\begin{align*}
J_4
\leq& \frac{\sigma}{16} \int_{\R^d} | \nabla (\mathbf{u}_1 - \mathbf{u}_2 ) |^2 \, dx \\
& + \hat{C} \Big\|   \int_0^1   p' ( V^{\varepsilon_p}*u^{\sigma} +z V^{\varepsilon_p}*\mathbf{v}_1 ) \nabla V^{\varepsilon_p}*( \mathbf{u}_1 - \mathbf{u}_2 )  \, dz\Big \|_{L^2(\R^d)}^2 \\
& + \hat{C} \Big\|   \int_0^1   p'' ( V^{\varepsilon_p}*u^{\sigma} +z V^{\varepsilon_p}*\mathbf{v}_1 ) V^{\varepsilon_p}* \nabla ( u^{\sigma} + z \mathbf{v}_1  )  V^{\varepsilon_p}*( \mathbf{u}_1 - \mathbf{u}_2 )  \, dz \Big\|_{L^2(\R^d)}^2 \\
\leq&  \frac{\sigma}{16} \int_{\R^d} | \nabla (\mathbf{u}_1 - \mathbf{u}_2 ) |^2 \, dx + \hat{C} \| \nabla  V^{\varepsilon_p} * ( \mathbf{u}_1 - \mathbf{u}_2 ) \|_{L^2(\R^d)}^2 + \hat{C} \|  V^{\varepsilon_p} * ( \mathbf{u}_1 - \mathbf{u}_2 ) \|_{L^2(\R^d)}^2 \\
\leq&  \frac{\sigma}{16} \int_{\R^d} | \nabla (\mathbf{u}_1 - \mathbf{u}_2 ) |^2 \, dx + \hat{C}   \| \mathbf{u}_1 - \mathbf{u}_2   \|_{L^2(\R^d)}^2.
\end{align*}
For $J_5$ we obtain that
\begin{align*}
J_5
\leq& \frac{\sigma}{16} \int_{\R^d} | \nabla (\mathbf{u}_1 - \mathbf{u}_2 ) |^2 \, dx \\
& + \hat{C} \| u^{\sigma} \|_{L^{\infty}(\R^d)}^2 \Big\| \int_0^1  \nabla \Big(  \big(  p' ( V^{\varepsilon_p}*u^{\sigma} +z V^{\varepsilon_p}*\mathbf{v}_1 ) - p' ( V^{\varepsilon_p}*u^{\sigma} +z V^{\varepsilon_p}*\mathbf{v}_2 )  \big) V^{\varepsilon_p}* \mathbf{u}_2 \Big)  \, dz \Big\|_{L^2(\R^d)}^2 \\
\leq& \frac{\sigma}{16} \int_{\R^d} | \nabla (\mathbf{u}_1 - \mathbf{u}_2 ) |^2 \, dx \\
& + \hat{C} \Big\| \int_0^1  \nabla \int_0^1     p'' \big( V^{\varepsilon_p}*u^{\sigma} +z \tau  V^{\varepsilon_p}*\mathbf{v}_1 + z(1-\tau)V^{\varepsilon_p}*\mathbf{v}_2 \big)  zV^{\varepsilon_p}*( \mathbf{v}_1 - \mathbf{v}_2  )  \, d \tau \, dz \Big\|_{L^2(\R^d)}^2 \\
\leq&  \frac{\sigma}{16} \int_{\R^d} | \nabla (\mathbf{u}_1 - \mathbf{u}_2 ) |^2 \, dx \\
& + \hat{C} \Big\| \int_0^1  \int_0^1     p''' \big( V^{\varepsilon_p}*u^{\sigma} +z \tau  V^{\varepsilon_p}*\mathbf{v}_1 + z(1-\tau)V^{\varepsilon_p}*\mathbf{v}_2 \big)  \\
& \ \ \ \ \ \ \ \ \ \ \ \ \ \ \cdot \nabla V^{\varepsilon_p}* \big( u^{\sigma} +z \tau  \mathbf{v}_1 + z(1-\tau)\mathbf{v}_2 \big) zV^{\varepsilon_p}*( \mathbf{v}_1 - \mathbf{v}_2  )  \, d \tau \, dz \Big\|_{L^2(\R^d)}^2 \\
&+ \hat{C} \Big\| \int_0^1  \int_0^1     p'' \big( V^{\varepsilon_p}*u^{\sigma} +z \tau  V^{\varepsilon_p}*\mathbf{v}_1 + z(1-\tau)V^{\varepsilon_p}*\mathbf{v}_2 \big)  z\nabla V^{\varepsilon_p}*( \mathbf{v}_1 - \mathbf{v}_2  )  \, d \tau \, dz \Big\|_{L^2(\R^d)}^2 \\
\leq& \frac{\sigma}{16} \int_{\R^d} | \nabla (\mathbf{u}_1 - \mathbf{u}_2 ) |^2 \, dx \\
&+ \hat{C} \sup_{0<z, \tau <1} \|  V^{\varepsilon_p}*\nabla  \big( u^{\sigma} +z \tau  \mathbf{v}_1 + z(1-\tau)\mathbf{v}_2   \big)  \|_{L^{\infty}(\R^d)}^2 \| V^{\varepsilon_p} * ( \mathbf{v}_1 - \mathbf{v}_2 ) \|_{L^2(\R^d)}^2 \\
&+ \hat{C}\|\nabla V^{\varepsilon_p}\|_{L^1(\R^d)}^2 \|  \mathbf{v}_1 - \mathbf{v}_2  \|_{L^2(\R^d)}^2\\
\leq& \frac{\sigma}{16} \int_{\R^d} | \nabla (\mathbf{u}_1 - \mathbf{u}_2 ) |^2 \, dx + \hat{C} \|  \mathbf{v}_1 - \mathbf{v}_2  \|_{L^2(\R^d)}^2.
\end{align*}
Combining the estimates for $J_1$-$J_9$, it follows that
\begin{align*}
\frac{d}{dt} \| \mathbf{u}_1 - \mathbf{u}_2  \|_{L^2(\R^d)}^2 + \frac{\sigma}{2} \| \nabla (\mathbf{u}_1 - \mathbf{u}_2)\|_{L^2(\R^d)}^2 \leq \hat{C} \| \mathbf{u}_1 - \mathbf{u}_2  \|_{L^2(\R^d)}^2  + \hat{C} \| \mathbf{v}_1 - \mathbf{v}_2  \|_{L^2(\R^d)}^2.
\end{align*}
By Gr\"{o}nwall's inequality, we have
\begin{align*}
\| (\mathbf{u}_1 - \mathbf{u}_2)(t) \|_{L^2(\R^d)}^2 &\leq e^{\int_0^t  \hat{C}  \, d{\tilde s} } \hat{C} \int_0^t \| \mathbf{v}_1 - \mathbf{v}_2 \|_{L^2(\R^d)}^2   \, d{\tilde s} \leq e^{ \hat{C} T_*} \hat{C} T_* \| \mathbf{v}_1 - \mathbf{v}_2 \|_{ L^{\infty} ( 0, T_* ; L^2(\R^d) ) }^2,\;\;\;\forall t\in[0,T_*].
\end{align*}
We can take $T_*(\varepsilon_p)$ small enough such that $e^{ \hat{C} T_*} \hat{C} T_*<1$.
Therefore, the operator $\mathcal{T}$ possesses a unique fixed point ${\mathbf{u}}$ such that $\mathcal{T}({\mathbf{u}})={\mathbf{u}}$. Thus, we find a unique solution ${\mathbf{u}} \in Y$ on $[0,T_*]$ to the following system:
\begin{align}
\label{non_loc_fixed_point}
\partial_t {\mathbf{u}} - \sigma \Delta {\mathbf{u}}  =& \nabla \cdot \Big( ({\mathbf{u}} +u^{\sigma})\nabla (p_{\lambda}-p)(V^{\varepsilon_p}*({\mathbf{u}} +u^{\sigma})) + {\mathbf{u}} \nabla p ( V^{\varepsilon_p}*({\mathbf{u}} +u^{\sigma}) ) \\
& + u^{\sigma} \nabla \int_0^1 p' ( V^{\varepsilon_p}*u^{\sigma} +z V^{\varepsilon_p}*{\mathbf{u}} ) V^{\varepsilon_p}*{\mathbf{u}} \, dz
+u^{\sigma}\big(\nabla p ( V^{\varepsilon_p}*u^{\sigma}) - \nabla p(u^{\sigma})\big) \Big) \notag \\
&- \nabla \cdot \Big(  {\mathbf{u}} \nabla \Phi * V^{\varepsilon_k}*( {\mathbf{u}} + u^{\sigma} )  +u^{\sigma}\nabla \Phi*V^{\varepsilon_k}*{\mathbf{u}}  + u^{\sigma} ( \nabla \Phi*V^{\varepsilon_k}*u^{\sigma}   -\nabla \Phi * u^{\sigma} ) \Big), \notag\\
{\mathbf{u}}(0,x)=0.\;\;\;& \notag
\end{align}

\subsection{Global well-posedness of perturbation equations.}
We extend the local solution obtained in the last subsection globally, by doing the following estimates.
For any $t\in (0,T_*)$, we have
\begin{align*}
&\frac{1}{2} \frac{d}{dt} \int_{\R^d} |D^{\alpha} {\mathbf{u}} |^2 \, dx + \sigma \int_{\R^d} |\nabla D^{\alpha} {\mathbf{u}} |^2 \, dx \\
=& - \int_{\R^d} D^{\alpha} \big(  ({\mathbf{u}} +u^{\sigma})\nabla (p_{\lambda}-p)(V^{\varepsilon_p}*({\mathbf{u}} +u^{\sigma})) \big) \cdot \nabla D^{\alpha} {\mathbf{u}}  \, dx\\
& - \int_{\R^d}   D^{\alpha} \big( {\mathbf{u}} \nabla p ( V^{\varepsilon_p}*({\mathbf{u}} +u^{\sigma}) ) \big) \cdot \nabla D^{\alpha} {\mathbf{u}} \, dx   \\
& - \int_{\R^d}   D^{\alpha} \big(  u^{\sigma} \nabla \int_0^1 p' ( V^{\varepsilon_p}*u^{\sigma} +z V^{\varepsilon_p}*{\mathbf{u}} ) V^{\varepsilon_p}*{\mathbf{u}} \, dz  \big) \cdot \nabla D^{\alpha} {\mathbf{u}} \, dx\\
&   - \int_{\R^d}   D^{\alpha} \big( u^{\sigma}(\nabla p ( V^{\varepsilon_p}*u^{\sigma}) - \nabla p(u^{\sigma})) \big)   \cdot \nabla D^{\alpha} {\mathbf{u}} \, dx\\
& +  \int_{\R^d}   D^{\alpha} \big( {\mathbf{u}} \nabla \Phi * V^{\varepsilon_k}*( {\mathbf{u}} + u^{\sigma} ) \big)
\cdot \nabla D^{\alpha} {\mathbf{u}} \, dx \\
& + \int_{\R^d}   D^{\alpha} \big( u^{\sigma}\nabla \Phi*V^{\varepsilon_k}*{\mathbf{u}} \big)   \cdot \nabla D^{\alpha} {\mathbf{u}} \, dx  \\
& + \int_{\R^d}   D^{\alpha} \big(  u^{\sigma} ( \nabla \Phi*V^{\varepsilon_k}*u^{\sigma}   -\nabla \Phi * u^{\sigma} )  \big)   \cdot \nabla D^{\alpha} {\mathbf{u}} \, dx \\
=&: K_1 + K_2 + K_3 + K_4 + K_5 + K_6 + K_7.
\end{align*}
Since $\| V^{\varepsilon_p} * ({\mathbf{u}} +u^{\sigma})\|_{L^{\infty}((0,T_*) \times \R^d) }$ is bounded, we can choose $\lambda$ small enough in order to ensure that $K_1=0$.
For $K_2$ we get
\begin{align*}
K_2
\leq& \frac{\sigma}{13} \int_{\R^d} |\nabla D^{\alpha} {\mathbf{u}} |^2 \, dx + C \| {\mathbf{u}}  \|_{L^{\infty}(\R^d)}^2 \| D^{\alpha}  \nabla p ( V^{\varepsilon_p}*({\mathbf{u}} +u^{\sigma}) )   \|_{L^2(\R^d)}^2 \\
&+ C \| D^{\alpha} {\mathbf{u}} \|_{L^2(\R^d)}^2 \|  \nabla p ( V^{\varepsilon_p}*({\mathbf{u}} +u^{\sigma}) )   \|_{L^{\infty}(\R^d)}^2 \\
\leq&  \frac{\sigma}{13} \int_{\R^d} |\nabla D^{\alpha} {\mathbf{u}} |^2 \, dx + C \| {\mathbf{u}}  \|_{L^{\infty}(\R^d)}^2 ( 1 +   \| \nabla D^{\alpha} {\mathbf{u}} \|_{L^2(\R^d)}^2 ) + C \|  D^{\alpha} {\mathbf{u}} \|_{L^2(\R^d)}^2 \\
\leq&  \frac{\sigma}{13} \int_{\R^d} |\nabla D^{\alpha} {\mathbf{u}} |^2 \, dx
+ C \| {\mathbf{u}}  \|_{H^s(\R^d)}^2  + C (\varepsilon_k + \varepsilon_p) \|  \nabla D^{\alpha} {\mathbf{u}} \|_{L^2(\R^d)}^2,
\end{align*}
where we used that $ \| {\mathbf{u}} \|_{ L^{\infty} ( 0, T_* ; H^s(\R^d) ) }^2 \leq \varepsilon_k + \varepsilon_p $.
Then we take $ \varepsilon_k + \varepsilon_p $
 small enough such that
 \begin{align*}
 K_2 \leq \frac{\sigma}{12} \|  \nabla {\mathbf{u}} \|_{H^s(\R^d)}^2 + C \|  {\mathbf{u}} \|_{H^s(\R^d)}^2.
 \end{align*}
Next, we analyze $K_3$.
\begin{align*}
K_3 =&  - \int_{\R^d}  \int_0^1  u^{\sigma} \nabla D^{\alpha} \big(  p' ( V^{\varepsilon_p}*u^{\sigma} +z V^{\varepsilon_p}*{\mathbf{u}} ) V^{\varepsilon_p}*{\mathbf{u}} \big)   \cdot \nabla D^{\alpha} {\mathbf{u}} \, dz \, dx \\
&- \int_{\R^d}  \int_0^1 \big( D^{\alpha} \big( u^{\sigma} \nabla  \big(  p' ( V^{\varepsilon_p}*u^{\sigma} +z V^{\varepsilon_p}*{\mathbf{u}} ) V^{\varepsilon_p}*{\mathbf{u}} \big)   \big) - u^{\sigma}\nabla D^{\alpha}\big( p' ( V^{\varepsilon_p}*u^{\sigma} +z V^{\varepsilon_p}*{\mathbf{u}} ) V^{\varepsilon_p}*{\mathbf{u}} \big)  \big) \cdot \nabla  D^{\alpha} {\mathbf{u}}  \, dz \, dx \\
=&: K_{31} + K_{32}.
\end{align*}
For $K_{32}$,
\begin{align*}
K_{32} \leq& \|  \nabla D^{\alpha} {\mathbf{u}} \|_{L^2(\R^d)} \sup_{0< z < 1} \| \nabla u^{\sigma} \|_{L^{\infty}(\R^d)} \| D^{\alpha-1} \nabla \big( p' ( V^{\varepsilon_p}*u^{\sigma} +z V^{\varepsilon_p}*{\mathbf{u}} ) V^{\varepsilon_p}*{\mathbf{u}}   \big)   \|_{L^2(\R^d)}\\
&+ \|  \nabla D^{\alpha} {\mathbf{u}} \|_{L^2(\R^d)} \sup_{0< z < 1} \|  D^{\alpha} u^{\sigma} \|_{L^2(\R^d)} \|  \nabla \big( p' ( V^{\varepsilon_p}*u^{\sigma} +z V^{\varepsilon_p}*{\mathbf{u}} ) V^{\varepsilon_p}*{\mathbf{u}}   \big)   \|_{L^{\infty}(\R^d)} \\
\leq& \frac{\sigma}{36}  \|  \nabla D^{\alpha} {\mathbf{u}} \|_{L^2(\R^d)}^2 + C  \|   {\mathbf{u}} \|_{H^s(\R^d)}^2.
\end{align*}
Now we handle $K_{31}$.
\begin{align*}
K_{31} =& - \int_{\R^d}  \int_0^1  u^{\sigma}  p' ( V^{\varepsilon_p}*u^{\sigma} +z V^{\varepsilon_p}*{\mathbf{u}} ) V^{\varepsilon_p}*\nabla D^{\alpha}{\mathbf{u}}  \cdot \nabla D^{\alpha}{\mathbf{u}}   \, dz \, dx\\
&- \int_{\R^d}  \int_0^1  u^{\sigma} \Big( \nabla D^{\alpha} \big( p' ( V^{\varepsilon_p}*u^{\sigma} +z V^{\varepsilon_p}*{\mathbf{u}} ) V^{\varepsilon_p}*  {\mathbf{u}} \big)    -  p' ( V^{\varepsilon_p}*u^{\sigma} +z V^{\varepsilon_p}*{\mathbf{u}} ) V^{\varepsilon_p} *\nabla D^{\alpha}  {\mathbf{u}}   \Big)  \cdot \nabla D^{\alpha}{\mathbf{u}}   \, dz \, dx\\
=&: K_{311}+ K_{312}.
\end{align*}
For the second term $K_{312}$,
\begin{align*}
K_{312} \leq& \| \nabla D^{\alpha}{\mathbf{u}} \|_{L^2(\R^d)} \| u^{\sigma} \|_{L^{\infty}(\R^d)}  \| D^{\alpha} V^{\varepsilon_p}* {\mathbf{u}}
\|_{L^2(\R^d)}   \sup_{0\leq z\leq 1} \|\nabla  p' ( V^{\varepsilon_p}*u^{\sigma} +z V^{\varepsilon_p}*{\mathbf{u}} ) \|_{L^{\infty}(\R^d)}      \\
&+ \| \nabla D^{\alpha}{\mathbf{u}} \|_{L^2(\R^d)} \| u^{\sigma} \|_{L^{\infty}(\R^d)} \|  V^{\varepsilon_p}* {\mathbf{u}}
\|_{L^{\infty}(\R^d)}   \sup_{0\leq z\leq 1} \|\nabla D^{\alpha} p' ( V^{\varepsilon_p}*u^{\sigma} +z V^{\varepsilon_p}*{\mathbf{u}} ) \|_{L^2(\R^d)}   \\
\leq& \| \nabla D^{\alpha}{\mathbf{u}} \|_{L^2(\R^d)} \| u^{\sigma} \|_{L^{\infty}(\R^d)}  \| D^{\alpha} V^{\varepsilon_p}* {\mathbf{u}}   \|_{L^2(\R^d)}   \sup_{0\leq z\leq 1} \|\nabla  p' ( V^{\varepsilon_p}*u^{\sigma} +z V^{\varepsilon_p}*{\mathbf{u}} ) \|_{L^{\infty}(\R^d)}      \\
&+ C\| \nabla D^{\alpha}{\mathbf{u}} \|_{L^2(\R^d)} \| u^{\sigma} \|_{L^{\infty}(\R^d)} \|  V^{\varepsilon_p}* {\mathbf{u}}
 \|_{L^{\infty}(\R^d)}   (1+ \| D^{\alpha} \nabla {\mathbf{u}} \|_{L^2(\R^d)})\\
\leq& \frac{\sigma}{37} \| \nabla {\mathbf{u}}  \|_{H^s(\R^d)}^2 + C \| {\mathbf{u}} \|_{H^s(\R^d)}^2  + C (\varepsilon_k +\varepsilon_p)\| \nabla{\mathbf{u}}  \|_{H^s(\R^d)}^2.
\end{align*}
We can take $ \varepsilon_k +\varepsilon_p $ small enough such that
\begin{align*}
K_{312} \leq \frac{\sigma}{36}  \| \nabla {\mathbf{u}}  \|_{H^s(\R^d)}^2 + C  \| {\mathbf{u}}  \|_{H^s(\R^d)}^2.
\end{align*}
For $K_{311}$,
\begin{align*}
K_{311}
=&  - \int_{\R^d}  \int_0^1  W^{\varepsilon_p}* \big( u^{\sigma}  p' ( V^{\varepsilon_p}*u^{\sigma} +z V^{\varepsilon_p}*{\mathbf{u}} ) \nabla D^{\alpha}{\mathbf{u}} \big) \cdot W^{\varepsilon_p}*\nabla D^{\alpha}{\mathbf{u}}      \, dz \, dx \\
=& - \int_{\R^d}  \int_0^1   u^{\sigma}  p' ( V^{\varepsilon_p}*u^{\sigma} +z V^{\varepsilon_p}*{\mathbf{u}} ) \big| W^{\varepsilon_p}*\nabla D^{\alpha}{\mathbf{u}} \big|^2  \, dz \, dx \\
&- \int_{\R^d}  \int_0^1   \Big( W^{\varepsilon_p}* \big( u^{\sigma}  p' ( V^{\varepsilon_p}*u^{\sigma} +z V^{\varepsilon_p}*{\mathbf{u}} ) \nabla D^{\alpha}{\mathbf{u}} \big)   - u^{\sigma}  W^{\varepsilon_p}* \big( p' ( V^{\varepsilon_p}*u^{\sigma} +z V^{\varepsilon_p}*{\mathbf{u}} ) \nabla D^{\alpha}{\mathbf{u}}  \big)   \Big)\\
&\qquad\qquad \cdot W^{\varepsilon_p}*\nabla D^{\alpha}{\mathbf{u}}  \, dz \, dx \\
&- \int_{\R^d}  \int_0^1  u^{\sigma} \big( W^{\varepsilon_p}* \big( p' ( V^{\varepsilon_p}*u^{\sigma} +z V^{\varepsilon_p}*{\mathbf{u}} ) \nabla D^{\alpha}{\mathbf{u}} \big)   -   p' ( V^{\varepsilon_p}*u^{\sigma} +z V^{\varepsilon_p}*{\mathbf{u}} )   W^{\varepsilon_p}*\nabla D^{\alpha}{\mathbf{u}}    \big) \cdot W^{\varepsilon_p}*\nabla D^{\alpha}{\mathbf{u}}  \, dz \, dx \\
\leq& 0+ K_{3111} + K_{3112}.
\end{align*}
Using Lemma \ref{lemma_non_loc}, we obtain that
\begin{align*}
K_{3111}&\leq \| \nabla D^{\alpha} {\mathbf{u}}  \|_{L^2(\R^d)} \sup_{0< z < 1} \|   W^{\varepsilon_p}* \big( u^{\sigma}  p' ( V^{\varepsilon_p}*u^{\sigma} +z V^{\varepsilon_p}*{\mathbf{u}} ) \nabla D^{\alpha}{\mathbf{u}} \big)  - u^{\sigma}  W^{\varepsilon_p}* \big( p' ( V^{\varepsilon_p}*u^{\sigma} +z V^{\varepsilon_p}*{\mathbf{u}} ) \nabla D^{\alpha}{\mathbf{u}}  \big)   \|_{L^2(\R^d)}\\
&\leq C  \| \nabla D^{\alpha} {\mathbf{u}}  \|_{L^2(\R^d)} \varepsilon_p \| \nabla u^{\sigma} \|_{L^{\infty}(\R^d)} \sup_{0< z < 1} \|  p' ( V^{\varepsilon_p}*u^{\sigma} +z V^{\varepsilon_p}*{\mathbf{u}} ) \nabla D^{\alpha}{\mathbf{u}}   \|_{L^2(\R^d)}\\
&\leq C \varepsilon_p  \| \nabla D^{\alpha} {\mathbf{u}}  \|_{L^2(\R^d)}^2,
\end{align*}
and
\begin{align*}
K_{3112}&\leq \| W^{\varepsilon_p} * \nabla D^{\alpha} {\mathbf{u}}  \|_{L^2(\R^d)} \sup_{0< z < 1} \|   W^{\varepsilon_p}* \big(  p' ( V^{\varepsilon_p}*u^{\sigma} +z V^{\varepsilon_p}*{\mathbf{u}} ) \nabla D^{\alpha}{\mathbf{u}} \big)   -  p' ( V^{\varepsilon_p}*u^{\sigma} +z V^{\varepsilon_p}*{\mathbf{u}} ) W^{\varepsilon_p} * \nabla D^{\alpha}{\mathbf{u}}    \|_{L^2(\R^d)}\\
&\leq C  \| \nabla D^{\alpha} {\mathbf{u}}  \|_{L^2(\R^d)} \varepsilon_p \|  \nabla D^{\alpha} {\mathbf{u}}  \|_{L^2(\R^d)} \sup_{0< z < 1} \| \nabla  p' ( V^{\varepsilon_p}*u^{\sigma} +z V^{\varepsilon_p}*{\mathbf{u}} )  \|_{L^{\infty}(\R^d)}\\
&\leq C \varepsilon_p  \| \nabla D^{\alpha} {\mathbf{u}}  \|_{L^2(\R^d)}^2.
\end{align*}
Taking $\varepsilon_p$ small enough such that $C \varepsilon_p \leq \frac{\sigma}{36}$, it follows that
\begin{align*}
K_3 \leq \frac{\sigma}{12} \| \nabla {\mathbf{u}}   \|_{H^s(\R^d)}^2 + C \| {\mathbf{u}}   \|_{H^s(\R^d)}^2 .
\end{align*}
For $K_4$,
\begin{align*}
K_4
\leq& \frac{\sigma}{12} \int_{\R^d} |\nabla D^{\alpha} {\mathbf{u}} |^2 \, dx
+ C \| D^{\alpha}\big(u^{\sigma} ( \nabla p ( V^{\varepsilon_p}*u^{\sigma}) - \nabla p (u^{\sigma}) ) \big)  \|_{L^2(\R^d)}^2 \\
\leq&  \frac{\sigma}{12} \int_{\R^d} |\nabla D^{\alpha} {\mathbf{u}} |^2 \, dx \\
&+ C \| D^{\alpha} u^{\sigma}\|_{L^2(\R^d)}^2 \Big\| \nabla \int_0^1 p'(zV^{\varepsilon_p} * u^{\sigma} + (1-z)u^{\sigma} )(V^{\varepsilon_p}*u^{\sigma} - u^{\sigma}) \, dz \Big\|_{L^{\infty}(\R^d)}^2 \\
&+ C \| u^{\sigma}\|_{L^{\infty}(\R^d)}^2 \Big\|D^\alpha \nabla \int_0^1 p'(zV^{\varepsilon_p} * u^{\sigma} + (1-z)u^{\sigma} )(V^{\varepsilon_p}*u^{\sigma} - u^{\sigma}) \, dz \Big\|_{L^2(\R^d)}^2\\
\leq& \frac{\sigma}{12} \int_{\R^d} |\nabla D^{\alpha}{\mathbf{u}} |^2 \, dx + C \| V^{\varepsilon_p}*u^{\sigma} - u^{\sigma}  \|_{H^s(\R^d)}^2 \\
&+ C \int_0^1 \Big( \| D^{\alpha+1}  p'(zV^{\varepsilon_p} * u^{\sigma} + (1-z)u^{\sigma}) \|_{L^2(\R^d)}^2 \|  V^{\varepsilon_p}*u^{\sigma} - u^{\sigma} \|_{L^{\infty}(\R^d)}^2  \\
&\ \ \ \ \ \ \ \ \ \ \ \ \  + \|  p'(zV^{\varepsilon_p} * u^{\sigma} + (1-z)u^{\sigma}) \|_{L^{\infty}(\R^d)}^2 \|  D^{\alpha+1} ( V^{\varepsilon_p}*u^{\sigma} - u^{\sigma} ) \|_{L^2(\R^d)}^2  \Big) \, dz \\
\leq&  \frac{\sigma}{12} \int_{\R^d} |\nabla D^{\alpha} {\mathbf{u}} |^2 \, dx + C \| V^{\varepsilon_p}*u^{\sigma} - u^{\sigma}  \|_{H^{s+1}(\R^d)}^2 \\
\leq&  \frac{\sigma}{12} \int_{\R^d} |\nabla D^{\alpha} {\mathbf{u}} |^2 \, dx + C \varepsilon_p^2.
\end{align*}
Now, we estimate $K_5$.
\begin{align*}
K_5
\leq &\frac{\sigma}{12} \int_{\R^d} |\nabla D^{\alpha} {\mathbf{u}} |^2 \, dx
+ C \|  D^{\alpha} \big( {\mathbf{u}} \nabla \Phi * V^{\varepsilon_k}* {\mathbf{u}} \big) \|_{L^2(\R^d)}^2 + C \| D^{\alpha} \big( {\mathbf{u}} \nabla \Phi * V^{\varepsilon_k}* u^{\sigma}  \big) \|_{L^2(\R^d)}^2 \\
\leq& \frac{\sigma}{12} \int_{\R^d} |\nabla D^{\alpha} {\mathbf{u}} |^2 \, dx + C \|  D^{\alpha}  {\mathbf{u}} \|_{L^2(\R^d)}^2 \|  \nabla \Phi * V^{\varepsilon_k}* {\mathbf{u}}  \|_{L^{\infty}(\R^d)}^2 + C \| {\mathbf{u}}  \|_{L^{\infty}(\R^d)}^2  \| D^{\alpha}  V^{\varepsilon_k}* \nabla \Phi *{\mathbf{u}} \|_{L^2(\R^d)}^2 \\
&+ C \|  D^{\alpha}  {\mathbf{u}} \|_{L^2(\R^d)}^2 \|  \nabla \Phi * V^{\varepsilon_k}* u^{\sigma}  \|_{L^{\infty}(\R^d)}^2 + C \| {\mathbf{u}}  \|_{L^{\infty}(\R^d)}^2  \| D^{\alpha}  V^{\varepsilon_k}* \nabla \Phi *  u^{\sigma} \|_{L^2(\R^d)}^2 \\
\leq& \frac{\sigma}{12} \int_{\R^d} |\nabla D^{\alpha} {\mathbf{u}} |^2 \, dx + C \| {\mathbf{u}} \|_{H^s(\R^d)}^2.
\end{align*}
Similarly, $K_6$ and $K_7$ can be bounded by
\begin{align*}
K_6+K_7
\leq   \frac{\sigma}{6} \int_{\R^d} |\nabla D^{\alpha} \tilde{\mathbf{u}} |^2 \, dx + C \| {\mathbf{u}} \|_{H^s(\R^d)}^2.
\end{align*}
From estimates of $K_1 - K_7$, we infer that
\begin{align*}
\frac{d}{dt} \| {\mathbf{u}} \|_{H^s(\R^d)}^2 + \frac{\sigma}{2} \| \nabla {\mathbf{u}} \|_{H^s(\R^d)}^2 \leq C \| {\mathbf{u}} \|_{H^s(\R^d)}^2
+ C( \varepsilon_k + \varepsilon_p )^2.
\end{align*}
By Gr\"{o}nwall's inequality, it follows that
\begin{align*}
\| {\mathbf{u}}(t) \|_{H^s(\R^d)}^2 \leq e^{CT} CT ( \varepsilon_k + \varepsilon_p )^2, \;\;\; {\rm{for\;\;all\;\;}}  0\leq t\leq  T_*\leq T.
\end{align*}
Therefore,  we can construct a solution $\mathbf{u}$ to the following partial differential equations
\begin{align*}
\partial_t \mathbf{u} - \sigma \Delta \mathbf{u}  =& \nabla \cdot \Big( (\mathbf{u} +u^{\sigma})\nabla p_{\lambda}(V^{\varepsilon_p}*(\mathbf{u} +u^{\sigma})) - u^{\sigma} \nabla p (u^{\sigma}) \Big)\\
&- \nabla \cdot \Big(  (\mathbf{u} +u^{\sigma}) \mathbf{u} \nabla \Phi * V^{\varepsilon_k}*( \mathbf{u} + u^{\sigma} )  - u^{\sigma} \nabla (\Phi* u^{\sigma})\Big), \\
\mathbf{u}(0,x)=0\quad&
\end{align*}
on $(0,T)$ for any $T < \infty $.
Moreover,
\begin{align*}
\|  \mathbf{u} \|_{L^{\infty}(0,T; H^s(\R^d))}^2 \leq C (\varepsilon_k + \varepsilon_p)^2, \ s>\frac{d}{2} +2.
\end{align*}
By Sobolev's embedding theorem, we get
\begin{align*}
\|u^{\sigma} - u^{\varepsilon, \sigma}\|_{L^{\infty} (0,T; W^{2, \infty} (\R^d) ) } \leq  C\| \mathbf{u} \|_{L^{\infty} (0,T; H^s (\R^d) )} \leq C(\varepsilon_k + \varepsilon_p).
\end{align*}
\hfill\qedsymbol

\section{Limit $\sigma\rightarrow 0$ and the well-posedness of  \eqref{Keller_Segel_classical} }\label{section_u}

In this section, we prove the compactness of $(u^\sigma)_\sigma$ and proceed the compactness argument on all terms in \eqref{generalized_equation_u_sigma}
to complete the proof of Theorem \ref{lsigma}. Recall the estimates in Theorem \ref{theorem_u_sigma}
\begin{align}\label{ls1}
\|u^\sigma\|_{L^q((0,T)\times \R^d)}\leq C,\;\;\;\; \forall q\in [1,\infty],
\end{align}
where $C$ appeared in this proof is a positive constant independent of $\sigma$.
We multiply the system \eqref{generalized_equation_u_sigma} by $u^\sigma$ and integrate it over $\R^d$ to get
\begin{align*}
&\frac{1}{2}\frac{d}{dt}\int_{\R^d}|u^\sigma|^2\,dx+\sigma\int_{\R^d}|\nabla u^\sigma|^2\,dx
+\frac{4m}{(m+1)^2}\int_{\R^d}|\nabla (u^\sigma)^\frac{m+1}{2}|^2\,dx\\
=&\int_{\R^d}u^\sigma\nabla \Phi*u^\sigma\cdot\nabla u^\sigma\,dx
=-\frac{1}{2}\int_{\R^d}\Delta\Phi*u^\sigma(u^\sigma)^2\,dx
=\frac{1}{2}\int_{\R^d}(u^\sigma)^3\,dx\leq C.
\end{align*}
Therefore,
\begin{align}
&\sqrt{\sigma}\|\nabla u^\sigma\|_{L^2((0,T)\times\R^d)}\leq C,\label{ls2}\\
&\|\nabla(u^\sigma)^\frac{m+1}{2}\|_{L^2((0,T)\times\R^d)}\leq C.\label{ls3}
\end{align}
From \eqref{ls1} and \eqref{ls3}, we have
\begin{align}\label{ls7}
\|\nabla(u^\sigma)^m\|_{L^2((0,T)\times\R^d)}=\frac{2m}{m+1}\|(u^\sigma)^\frac{m-1}{2}\nabla (u^\sigma)^\frac{m+1}{2}\|_{L^2((0,T)\times\R^d)}
\leq C.
\end{align}
Combining above estimates with the system \eqref{generalized_equation_u_sigma}, we infer
\begin{align}\label{ls4}
\|\partial_t u^\sigma\|_{L^2(0,T;W^{-1,2}(\R^d))}\leq C.
\end{align}
Inequalities \eqref{ls7} and \eqref{ls4} allow us to use Theorem 3 of \citep{CJJ} (the nonlinear version of Aubin-Lions lemma) in growing $d$-dimensional balls $B_R$
and use diagonal argument to get
\begin{align}\label{ls6}
u^\sigma\rightarrow u\;\;\;{\rm{in}}\;\;\;L^{2m}(0,T;L^{2m}(\R^d)).
\end{align}
It is easy to see from \eqref{ls1} that
\begin{align}\label{ls9}
u^\sigma\overset{*}\rightharpoonup u\;\;\;{\rm{in}}\;\;\;L^l((0,T)\times\R^d),\;\;l\in (1,\infty].
\end{align}
By \eqref{ls7} and \eqref{ls6}, we obtain that
\begin{align}\label{ls10}
\nabla(u^\sigma)^m\rightharpoonup \nabla u^m\;\;\;{\rm{in}}\;\;\; L^2((0,T)\times\R^d).
\end{align}
With the help of \eqref{ls6}-\eqref{ls10}, we pass to the limit $\sigma\rightarrow 0$ in the weak formulation of \eqref{generalized_equation_u_sigma} to complete the proof of Theorem \ref{lsigma}.
\hfill\qedsymbol

\medskip
\indent
{\bf Acknowledgements:}
Yue Li would like to thank Chair in Applied Analysis of the University of Mannheim for hosting her one-year scientific visit as an exchange doctoral student. Yue Li is supported by NSFC (Grant No. 12071212).

\end{document}